\declaretheorem[parent=section]{theorem}
\declaretheorem[sibling=theorem]{lemma}
\declaretheorem[parent=section, style=definition]{definition}
\declaretheorem[parent=section, style=remark]{remark}
\title[Inelastic Boltzmann equation under shear heating]{Inelastic Boltzmann equation under shear heating}
\author[J. A. Carrillo]{Jos\'{e} A. Carrillo}
\address[JAC]{Mathematical Institute, University of Oxford, Oxford OX2 6GG, UK}
\email{carrillo@maths.ox.ac.uk}
\author[K. F. Chan]{Kam Fai Chan}
\address[KFC]{Department of Mathematics, The Chinese University of Hong Kong, Shatin, Hong Kong}
\email{akfchan@math.cuhk.edu.hk}
\author[R.-J. Duan]{Renjun Duan}
\address[RJD]{Department of Mathematics, The Chinese University of Hong Kong, Shatin, Hong Kong}
\email{rjduan@math.cuhk.edu.hk}
\author[Z.-G. Li]{Zongguang Li}
\address[ZGL]{Department of Applied Mathematics, The Hong Kong Polytechnic University, Hong Kong}
\email{zongguang.li@polyu.edu.hk}
\begin{document}

    \begin{abstract}
        In this paper, we study the spatially homogeneous inelastic Boltzmann equation for the angular cutoff pseudo-Maxwell molecules with an additional term of linear deformation. We establish the existence of non-Maxwellian self-similar profiles under the assumption of small deformation in the nearly elastic regime, and also obtain weak convergence to these self-similar profiles for global-in-time solutions with initial data that have finite mass and finite $p$-th order moment for any $2<p\leq 4$. Our results confirm the competition between shear heating and inelastic cooling that governs the  large time behavior of temperature. Specifically, temperature increases to infinity if shear heating dominates, decreases to zero if inelastic cooling prevails, and converges to a positive constant if the two effects are balanced. In the balanced scenario, the corresponding self-similar profile aligns with the steady solution.
    \end{abstract}

    \date{\today}
    \subjclass[2020]{35Q20, 35C06; 35B40, 35B30}

    \keywords{Inelastic Boltzmann equation, pseudo-Maxwell molecules, deformation force, global existence, self-similar asymptotic behavior}

    \maketitle


    \thispagestyle{empty}

    \section{Introduction}
    The initial value problem {of} the spatially homogeneous inelastic Boltzmann equation under the effect of shear heating in \( \mathbb{R}^d \) modeled by the matrix \( A \in \mathbb{R}^{d \times d} \) with $d \geq 2$ is given by
    \begin{equation} \label{equ:inelasticBoltzmann}
        {\partial_{t}} f - { \nabla_{v} } \cdot \!(A v f) = Q_e(f, f), \quad f(0, v) = f_0(v).
    \end{equation}
    Here, the unknown function \( f = f(t, v) \geq 0 \) is the one-particle density distribution of rarefied gas particles with velocity \( v \in \mathbb{R}^d \) at time \( t \geq 0 \), and the initial data \( f(0, v) = f_0(v) \geq 0 \) is given. The inelastic Boltzmann collision operator \( Q_e \) acting only on {the} velocity variable involves a parameter \( {e_\mathrm{res}} \in (0, 1] \) called the coefficient of restitution, which we assume to be constant through the paper. In the weak form, $Q_e(f, f)$ is defined as
    \begin{align}\label{weakform}
        {\int_{\mathbb{R}^d}^{}} \psi(v) Q_e(f, f)(v) \, {\mathrm{d}{v}}
        &= \frac{1}{2} {\int_{\mathbb{R}^d}^{}} 
            {\int_{\mathbb{R}^d}^{}} 
                {\int_{\mathbb{S}^{d-1}}^{}} 
                    \mathrm{B}(v - v_\ast, \sigma) f(v) f(v_*) (\psi(v') + \psi(v_*') - \psi(v) - \psi(v_*))
                 \, {\mathrm{d}{\sigma}}
            {\mathrm{d}{v_*}}
        {\mathrm{d}{v}},
    \end{align}
    where the pre-collision velocity pair $(v, v_\ast)$ and the post-collision velocity pair $(v', v_\ast')$ satisfy
    \begin{equation*}
        \left\{\begin{aligned}
            v' &= \frac{v + v_*}{2} + \frac{1 - z}{2} (v - v_*) + \frac{z}{2} |v - v_*| \sigma, \\
            v_*' &= \frac{v + v_*}{2} - \frac{1 - z}{2} (v - v_*) - \frac{z}{2} |v - v_*| \sigma,
        \end{aligned}\right.
    \end{equation*}
    with $\sigma\in \mathbb{S}^{d-1}$ and 
    {\begin{align}\label{Defe}
    	z := \frac{1}{2} (1 + {e_\mathrm{res}}) \in (\frac{1}{2}, 1]. 
    \end{align}} When \( {e_\mathrm{res}} = 1 \), which corresponds to \( z = 1 \), the collision operator reverts back to the classical elastic case.

    The Boltzmann collision kernel \( \mathrm{B}(v - v_\ast, \sigma) \) in \eqref{weakform}  takes the form
    \begin{equation}\label{DefB}
        \mathrm{B}(v - v_\ast, \sigma) = \vert v-v_*\vert^{\gamma_0} b(\cos\theta),\  {-d}<\boldsymbol{\gamma}\leq1,
    \end{equation}
    where
    \begin{equation*}
        \cos \theta=\frac{v-v_*}{\vert v-v_*\vert}\cdot \sigma, \  0<\theta \leq\pi/2.
    \end{equation*}
    The cases ${-d}<\boldsymbol{\gamma}<0$, $\boldsymbol{\gamma}=0$ and $0<\boldsymbol{\gamma}\leq1$ are called soft potentials, pseudo-Maxwell molecules and hard potentials, respectively. Throughout the paper, we only focus on the case of pseudo-Maxwell molecules with $\boldsymbol{\gamma}=0$. The angular part of the collision kernel \( b \geq 0 \) is assumed to be a continuous function and satisfy the Grad cutoff assumption
    \begin{equation*}
        b_n = {\int_{\mathbb{S}^{d-1}}^{}} 
            b(\hat{e} \cdot \sigma) (\hat{e} \cdot \sigma)^n
         \, {\mathrm{d}{\sigma}} < \infty,
    \end{equation*}
    for all unit vectors \( \hat{e} \in \mathbb{S}^{d-1} \) and all natural numbers \( n \in \mathbb{N} \). We further assume that the kernel is non-degenerate in the sense that 
    \begin{align}\label{Defnondegenerate}
    {b_0 > b_2 > 0,\qquad b_0 > b_1 .}
    \end{align}
{We remark that the assumptions on Maxwell molecules and also on the homogeneous regime for granular media via the inelastic Boltzmann equation might be seen as an artificial ansatz, cf.~\cite{Villani06MGM} and \cite{ALT}. Moreover, the assumption that the restitution
coefficient is a fixed constant could be not seen as very physical as it may cause a problem to define the dynamics of the microscopic particle system (inelastic collapse), cf.~\cite{AH} and \cite{BP}.} 

    The shearing matrix \( A \in \mathbb{R}^{d \times d} \) is a constant real matrix with the norm \( \|A\| = \sup_{|v| = 1} |A v| \), which includes the special case of the simple uniform shear flow (USF) 
    {\begin{align}\label{DefUSF}
    A = \alpha E_{12},\qquad(E_{12})_{ij} = \delta_{i1} \delta_{2j} 
    \end{align}}
    and shearing parameter \( \alpha \geq 0 \){, where $\delta_{ij}$ denotes the Kronecker delta}.

    Through the paper we also assume that the initial value \( f_0 = f_0(v) \) satisfies
    \begin{equation*}
        {\int_{\mathbb{R}^d}^{}} f_0(v) \, {\mathrm{d}{v}} = 1 ,\quad
        {\int_{\mathbb{R}^d}^{}} v f_0(v) \, {\mathrm{d}{v}} = 0.
    \end{equation*}
    By the fact that \( 1, v \) are collision invariants of the inelastic collision operator and vanish on the shearing term after integration, it is well-known that the mass and momentum of the solution to \eqref{equ:inelasticBoltzmann} are conserved. However, the energy is not necessarily conserved since inelastic collision introduces a loss of energy, and the shearing term is not guaranteed to balance this effect. In fact, there is a competition between shear heating and inelastic cooling that governs the  large time behavior of temperature. To further study this phenomenon, one needs to define the natural cooling rate \( \zeta \geq 0 \) as
    \begin{equation}\label{Defzeta}
        \zeta
        = z (1 - z) (b_0 - b_1)
        = 2 z (1 - z) {\int_{\mathbb{S}^{d-1}}^{}} b(\cos\theta) \sin^2\frac{\theta}{2} \, {\mathrm{d}{\sigma}},
    \end{equation}
    such that
    \begin{equation*}
        -\zeta |v - v_*|^2
        = {\int_{\mathbb{S}^{d-1}}^{}} b(\cos\theta) (|v'|^2 + |v_*'|^2 - |v|^2 - |v_*|^2) \, {\mathrm{d}{\sigma}},
    \end{equation*}
    which implies, in the case of no-shearing \( A = 0 \), that
    \begin{align*}
        \frac{\mathrm{d}}{{\mathrm{d} t}} {\int_{\mathbb{R}^d}^{}} |v|^2 f(t, v) \, {\mathrm{d}{v}}
        &= \frac{1}{2} {\int_{\mathbb{R}^d}^{}} 
            {\int_{\mathbb{R}^d}^{}} 
                {\int_{\mathbb{S}^{d-1}}^{}} 
                    b(\cos\theta) f(t,v) f(t,v_*) (|v'|^2 + |v_*'|^2 - |v|^2 - |v_*|^2)
                 \, {\mathrm{d}{\sigma}}
            {\mathrm{d}{v}}
        {\mathrm{d}{v}} \\
        &= -\frac{1}{2} \zeta {\int_{\mathbb{R}^d}^{}} 
            {\int_{\mathbb{R}^d}^{}} 
                f(t,v) f(t,v_*) |v - v_*|^2
             \, {\mathrm{d}{v_*}}
        {\mathrm{d}{v}} \\
        &= -\zeta {\int_{\mathbb{R}^d}^{}} |v|^2 f(t, v) \, {\mathrm{d}{v}}.
    \end{align*}
    It is also necessary to denote the constant
    \begin{equation}\label{Defc11}
        c_{11}
        = \frac{b_0 - b_2}{d - 1}
        = \frac{1}{d - 1} {\int_{\mathbb{S}^{d-1}}^{}} b(\cos\theta) \sin^2\theta \, {\mathrm{d}{\sigma}}
        \geq 0, 
    \end{equation}
    such that, by a direct calculation,
    \begin{align}
        {\int_{\mathbb{S}^{d-1}}^{}} 
            b(\sigma \cdot \hat{e})
         \, {\mathrm{d}{\sigma}}
        &= b_0,\label{b0} \\
        {\int_{\mathbb{S}^{d-1}}^{}} 
            b(\sigma \cdot \hat{e}) \sigma
         \, {\mathrm{d}{\sigma}}
        &= b_1 \hat{e},\label{b1} \\
        {\int_{\mathbb{S}^{d-1}}^{}} 
            b(\sigma \cdot \hat{e}) {{\sigma}^{\otimes 2}}
         \, {\mathrm{d}{\sigma}}
        &= (b_0 - d c_{11}) {\hat{e}}^{\otimes 2} + c_{11} I,\label{bsigma2}
    \end{align}
    for any unit vector \( \hat{e} \in \mathbb{S}^{d-1} \), with the notation \( {{v}^{\otimes 2}} = v \otimes v \) and \( v \otimes w = v w^\mathsf{T} \), where $w^\mathsf{T}$ is the transpose of the column vector $w$.

    In the paper, we aim at studying the long-time behavior of solutions to \eqref{equ:inelasticBoltzmann} under the competition of the shear heating and the inelastic cooling. Such competitive phenomenon was observed and discussed {in \cite{Cercignani01SFG,Garzo19GGF}} and our goal is to make a rigorous mathematical analysis. Note that two effects have been extensively investigated by \cite{GarzoSantos03KTG, JamesEtAl17SSP, BobylevEtAl20SSA} and \cite{CarlenEtAl09SCT} in their individual frameworks. In particular, solutions in both situations behave self-similarly in large time. Thus, a unified framework is developed to study the competition of two effects on the large time behavior solutions. For the purpose, we shall consider the equation under self-similar scaling
    \begin{equation*}
        \tilde{f}(t, \tilde{v}) = e^{d \beta t} f(t, e^{\beta t} \tilde{v})
    \end{equation*}
    with some suitable self-similar parameter \( \beta \in \mathbb{R} \) to be determined. This gives
    \begin{equation} \label{equ:inelasticBoltzmannSelfSim}
        {\partial_{t}} \tilde{f} - { \nabla_{\tilde{v}} } \cdot \!((A + \beta I) \tilde{v} \tilde{f}) = Q_e(\tilde{f}, \tilde{f}).
    \end{equation}
    The resulting equation is very similar to \eqref{equ:inelasticBoltzmann} with \( A \) replaced by the altered shearing matrix
    \begin{align}\label{DefAbeta}
        A_\beta \coloneqq A + \beta I.\end{align} Hence, for simplicity, we shall drop the tilde in \eqref{equ:inelasticBoltzmannSelfSim} and still denote $\tilde{f},\tilde{v}$ by $f,v$. It is then natural to see the stationary profile \( G \) to \eqref{equ:inelasticBoltzmannSelfSim}, which satisfies
    \begin{equation} \label{equ:inelasticBoltzmannSelfSimStationary}
        -{ \nabla_{v} } \cdot \!(A_\beta v G) = Q_e(G, G),
    \end{equation}
    should describe the long-time behavior of solution to \eqref{equ:inelasticBoltzmannSelfSim} or equivalently \eqref{equ:inelasticBoltzmann}. Note that in the special case $\beta=0$, \eqref{equ:inelasticBoltzmannSelfSimStationary} just corresponds to the steady {state} of \eqref{equ:inelasticBoltzmann}.

    To study \eqref{equ:inelasticBoltzmann} in the frequency space, we take {the} Fourier transform \( \varphi(k) = (\mathcal{F} f)(k) = {\int_{\mathbb{R}^d}^{}} e^{-i k \cdot v} f(v) \, {\mathrm{d}{v}} \) in \eqref{equ:inelasticBoltzmann}. Recall $\boldsymbol{\gamma}=0$. We then {formally} obtain equation for $\varphi$ as
    \begin{equation} \label{equ:inelasticBobylev}
        {\partial_{t}} \varphi + A^\mathsf{T} k \cdot { \nabla_{k} } \varphi = \widehat{Q_e}(\varphi, \varphi).
    \end{equation}
    Here $A^\mathsf{T}$ is the transpose of $A$ and the collision operator $\widehat{Q_e}$ as in \cite{BobylevCercignani03SSA} can be split as
    \begin{equation}\label{DefhatQ}
        \widehat{Q_e}(\varphi, \varphi)(k)
        = {\int_{\mathbb{S}^{d-1}}^{}} 
            b\big(\sigma \cdot \frac{k}{|k|}\big) (\varphi(k^+) \varphi(k^-) - \varphi(0) \varphi(k))
         \, {\mathrm{d}{\sigma}}
        = \widehat{Q^+_e}(\varphi,\varphi)(k) - b_0 \varphi(k),
    \end{equation}
    for Maxwell molecules, where {the negative term is obtained in terms of the mass conservation,}
    \begin{equation*}
        \widehat{Q^+_e}(\varphi, \psi)(k)
        = {\int_{\mathbb{S}^{d-1}}^{}} b(\cos\vartheta) \varphi(k^+) \psi(k^-) \, {\mathrm{d}{\sigma}}
    \end{equation*}
    is the bilinear gain part operator, \( \cos\vartheta = \frac{k}{|k|} \cdot \sigma \), and
    \begin{align*}
        k^- &= \frac{z}{2} (k - |k| \sigma),\\
        k^+ &= k - k^- = (1 - \frac{z}{2}) k + \frac{z}{2} |k| \sigma,
    \end{align*}
    are the corresponding Fourier variables.

    The linearized gain part operator \( \mathcal{L}_e \), linearized around the constant function \( 1 \), is defined as
\begin{equation}\label{DefLe}
        \mathcal{L}_e\varphi
        = \widehat{Q_e^+}(\varphi, 1) + \widehat{Q_e^+}(1, \varphi)
        = {\int_{\mathbb{S}^{d-1}}^{}} b(\cos\vartheta) (\varphi(k^+) + \varphi(k^-))  \, {\mathrm{d}{\sigma}}.
    \end{equation}
Similar to how we obtain \eqref{equ:inelasticBobylev}, after applying Fourier transform to \eqref{equ:inelasticBoltzmannSelfSim}, one gets
    \begin{equation} \label{equ:inelasticBobylevSelfSim}
        {\partial_{t}} \varphi + A_\beta^\mathsf{T} k \cdot { \nabla_{k} } \varphi = \widehat{Q_e}(\varphi, \varphi),
    \end{equation}
    which has the similar form to \eqref{equ:inelasticBobylev} but with a replaced shearing matrix.

    Our motivation to study \eqref{equ:inelasticBoltzmann} also originates from the recent study of homo-energetic solutions of the elastic Boltzmann equation, as introduced by \cite{Truesdell56PFE} and \cite{Galkin58CSG}, where the second and the third order moments are computed. The behavior of these moments are later investigated in numerous works including \cite{Nikolskii63TDH, Nikolskii63SES, TruesdellMuncaster80FMK, Dufty84DSF, SantosGarzo95EMS, MontaneroEtAl96SBV, AcedoEtAl02DHV, BobylevEtAl04MIH}. The initial value problem, as shown by Cercignani in \cite{Cercignani89EHA}, has a global-in-time \( {\mathit{L}^{1}} \) solution for a class of cutoff hard or pseudo-Maxwell collision kernels and for suitable initial data (cf.~\cite{GarzoSantos03KTG}). More recently, \cite{JamesEtAl17SSP} considers the Radon measure-valued solution and existence of self-similar profiles, \cite{BobylevEtAl20SSA} considers the equation via Fourier transform to establish the self-similar asymptotics of weak solutions in large time, \cite{DuanLiu21BEU, DuanLiu22USF} obtains regularity and the structure of shear dependency of the solution, and \cite{Kepka21SSP, Kepka22LBH} generalizes the results to non-cutoff pseudo-Maxwellian and hard potential.

    On the other hand, the study of inelastic Boltzmann equation, which models the behavior of granular gases, is a quite related topic. In \cite{BobylevEtAl00SPK}, the Boltzmann equation for inelastic pseudo-Maxwell potential is derived, together with the existence and uniqueness of the solution with initial data in \( {\mathit{L}_{2}^{1}} \). The behavior of the solution is studied in \cite{BobylevCercignani03SSA, BobylevEtAl03PAP,BCT1,BCT2,BCL}, justifying the behavior conjectured in \cite{ErnstBrito02SSI}. In \cite{BolleyCarrillo07TTI}, convergence of solutions with pseudo-Maxwell molecule in 2-Wasserstein metric is tackled. A survey of the use of Wassertein and Toscani metrics in this framework was performed in \cite{CTsurvey}. In \cite{MischlerEtAl06CPI, MischlerMouhot06CPI}, global well-posedness of the initial value problem for a generic class of collision kernels is settled, and Haff's law and the self-similar structure for the solution is justified for hard sphere model{, which seems more difficult to treat}. Concerning the interaction between inelastic collision and shearing, an explicit stationary state for the second moments is given in \cite{Cercignani01SFG}, and the result is further discussed  {in \cite{Cercignani02BEA,Garzo19GGF}. See also a recent work \cite{DolmaireMieleNota} for the linear equation.} 

    In this paper, we shall follow the approaches of \cite{JamesEtAl17SSP} and \cite{BobylevEtAl20SSA} and consider \eqref{equ:inelasticBoltzmann} both from the physical space and from the frequency space. More precisely, we will show that the results in \cite{JamesEtAl17SSP, BobylevEtAl20SSA} for the elastic Boltzmann equation, taken as an evolution equation on the space of probability measures, generalize naturally to \eqref{equ:inelasticBoltzmann} with inelastic collisions. In particular, we obtain the following result concerning the existence of stationary profile for inelastic collision.

    \begin{theorem} \label{thm:mainResult-statRadonProfile}
        Assume that \( {e_\mathrm{res}} \in (0, 1] \) {for $e_\mathrm{res}$ given in \eqref{Defe}, and the collision kernel \eqref{DefB}} is cutoff pseudo-Maxwellian. For sufficiently small \( \epsilon > 0 \) ensuring small shear, \( \|A\| < \epsilon \), and small inelasticity, \( 1 - {e_\mathrm{res}} < \epsilon \), 
        there exists \( \beta \in \mathbb{R} \) such that with this self-similar parameter \( \beta \), \eqref{equ:inelasticBoltzmannSelfSimStationary} has a nonnegative Radon-measure solution \( G \in \mathcal{M}_+ \) with finite mass, zero momentum and finite energy. Furthermore, \( G \) has finite \( p \)-moment for some \( p > 2 \). 
    \end{theorem}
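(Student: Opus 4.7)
The plan is to construct $G$ via a Tikhonov-type fixed point argument applied to the time-$T$ map of the energy-normalized self-similar flow of \eqref{equ:inelasticBoltzmannSelfSim}, in the spirit of \cite{JamesEtAl17SSP} adapted to the inelastic regime. The self-similar parameter $\beta$ is treated as an internal unknown, selected from the energy identity: testing \eqref{equ:inelasticBoltzmannSelfSimStationary} against $|v|^{2}$, integrating by parts on the drift term, and applying the cooling identity $\int|v|^{2}Q_{e}(G,G)\,dv=-\zeta\int|v|^{2}G\,dv$ (valid for zero-momentum probability measures) yields
\begin{equation*}
\beta \;=\; -\frac{\zeta}{2}\;-\;\frac{\operatorname{tr}(A\,M[G])}{\operatorname{tr}(M[G])},\qquad M[G]:=\int_{\mathbb{R}^{d}} v\otimes v\,G(dv),
\end{equation*}
a continuous functional $\beta=\beta[G]$ on measures of positive energy. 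In the perturbative regime $\|A\|,\,1-e_{\mathrm{res}}<\epsilon$, one has $|\beta[G]|\lesssim\epsilon$ with $\beta[G]\to -\zeta/2$ as $\epsilon\to 0$.

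\textbf{Fixed-point setup.} Fix a target energy $E_{0}>0$ (immaterial by scaling), an exponent $p\in(2,4]$ close to $2$, and a large constant $C_{p}>0$. The set
\begin{equation*}
\mathcal{K}=\Bigl\{G\in\mathcal{M}_{+}(\mathbb{R}^{d}):\textstyle\int G=1,\;\int v\,G=0,\;\operatorname{tr}(M[G])=E_{0},\;\int|v|^{p}G\le C_{p}\Bigr\}
\end{equation*}
is convex and weak-$*$ compact by Prokhorov's theorem. Let $\Psi_{t}:\mathcal{K}\to\mathcal{M}_{+}(\mathbb{R}^{d})$ denote the energy-normalized self-similar flow: solve \eqref{equ:inelasticBoltzmann} with initial datum $G$ and rescale as $f_{t}\mapsto e^{d\beta_{t}t}f_{t}(e^{\beta_{t}t}\cdot)$, where $\beta_{t}$ is continuously adjusted so that $\operatorname{tr}(M[\Psi_{t}(G)])\equiv E_{0}$. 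Mass and momentum are conserved along $\Psi_{t}$, the energy constraint holds by construction, and any common fixed point of the commuting family $\{\Psi_{t}\}_{t\ge 0}$ is a stationary solution of \eqref{equ:inelasticBoltzmannSelfSimStationary} with $\beta=\beta[G]$.

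\textbf{Invariance of $\mathcal{K}$ and main obstacle.} The principal technical difficulty is to show $\Psi_{t}(\mathcal{K})\subset\mathcal{K}$, which reduces to a uniform-in-time $p$-moment bound. Writing the evolution equation for the $p$-moment and invoking a Povzner-type inequality for the inelastic gain gives a differential inequality of the form
\begin{equation*}
\frac{d}{dt}\int|v|^{p}\Psi_{t}(G)\;\le\;\bigl(-p\,\beta_{t}-\zeta_{p}+C\,p\,\|A\|\bigr)\!\int|v|^{p}\Psi_{t}(G)+L_{p},
\end{equation*}
with a sharp constant $\zeta_{p}>\zeta$ for $p>2$, and a remainder $L_{p}$ controlled by lower-order moments that are already fixed in terms of $E_{0}$. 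Using $\beta_{t}\to -\zeta/2$ in the perturbative limit, for $\|A\|$ and $1-e_{\mathrm{res}}$ sufficiently small and $p$ sufficiently close to $2$, the bracketed coefficient is strictly negative, yielding the required uniform bound once $C_{p}$ is chosen large.

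\textbf{Conclusion.} Weak-$*$ continuity of $\Psi_{t}$ on $\mathcal{K}$ follows from standard stability estimates for the Fourier-side equation \eqref{equ:inelasticBobylevSelfSim}, using the uniform $p$-moment control to upgrade tightness. Tikhonov's fixed point theorem applied to $\Psi_{T}$ for some $T>0$, combined with the rigidity argument of \cite{JamesEtAl17SSP} upgrading $T$-periodic orbits to stationary ones via the commuting semigroup structure of $\{\Psi_{t}\}_{t\ge 0}$ together with well-posedness of the IVP at fixed $\beta=\beta[G]$, produces the desired $G\in\mathcal{K}$ solving \eqref{equ:inelasticBoltzmannSelfSimStationary}. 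All asserted properties (nonnegative Radon measure with unit mass, zero momentum, finite energy, and finite $p$-moment for some $p>2$) are encoded in $\mathcal{K}$.
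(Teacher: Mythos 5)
Your route is genuinely different from the paper's. The paper first solves the closed, linear second-moment equation \eqref{stableeq} perturbatively (via \cite[Lem.~4.16]{JamesEtAl17SSP}) to fix the pair $(\beta,B)$ \emph{before} any fixed-point argument, and then runs Schauder on the flow of \eqref{equ:inelasticBoltzmannSelfSim} with that fixed $\beta$, on a set that pins the \emph{full} second-moment matrix to $\Theta B$ (which is exactly stationary for the moment flow, so invariance of the constraint is automatic). You instead let $\beta$ float, normalize the energy dynamically, and recover $\beta$ a posteriori from the energy identity; your formula $\beta=-\zeta/2-\operatorname{tr}(AM[G])/\operatorname{tr}(M[G])$ is correct, the Povzner-based $p$-moment estimate is essentially the paper's (note the relevant comparison is $\zeta_p>p(\zeta/2+\|A\|)+\dots$, not merely $\zeta_p>\zeta$; it holds for small $\epsilon$ because $\zeta\lesssim 1-e_{\mathrm{res}}$ and $\|A\|<\epsilon$ while the Povzner constant stays bounded below), and the semigroup property of your normalized flow does hold, but only because $Q_e$ and the shear drift are dilation-covariant for Maxwell molecules — this, together with positivity of the energy along the flow (needed for $\beta_t$ to be defined; it follows from backward uniqueness of the linear moment ODE plus positive semidefiniteness) and weak-$*$ continuity of $G\mapsto\operatorname{tr}M[S(t)G]$ (which needs the uniform $p$-moment bound), are claims you use without proof.

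The genuine gap is the final step. Tikhonov applied to $\Psi_T$ for a \emph{single} $T$ gives only a $T$-periodic orbit of the normalized dynamics, and the commuting semigroup structure does not ``upgrade'' such an orbit to a common fixed point of all $\Psi_t$: since your set $\mathcal{K}$ pins only the trace of the second moment, a $\Psi_T$-fixed point merely has a $T$-periodic second-moment matrix, and the moment system genuinely possesses oscillatory modes $\gamma_\pm=\sigma\pm i\omega$ (see \eqref{equ:usfParaRelationComplexSol}), so time-periodic, non-stationary behavior is a real possibility that must be excluded, not dismissed. What the paper (following \cite{JamesEtAl17SSP}) actually does is produce fixed points $G_{t_n}$ for a sequence $t_n\to 0$, extract a weak-$*$ limit $G$ by compactness of the invariant set, and then use $n_k t_k\to t$ together with uniform continuity of the flow in time to conclude $S(t)G=G$ for every $t>0$; only then does one differentiate (in the weak formulation) to get \eqref{equ:inelasticBoltzmannSelfSimStationary}, and in your scheme one must additionally argue that the instantaneous normalization rate is constant in time (it is, by uniqueness in $-\dot\mu\,\nabla_v\cdot(vG)-\nabla_v\cdot(AvG)=Q_e(G,G)$ for nontrivial $G$, but this needs to be said). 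With the single-$T$ statement replaced by the $t_n\to 0$ diagonal argument and the continuity/well-definedness points above filled in, your construction goes through and in fact delivers the existence of the pair $(\beta, M[G])$ solving \eqref{stableeq} as an output rather than an input, at the cost of losing the paper's explicit characterization of $\beta$ (largest real part root), which the paper exploits later for the sign analysis in the USF case.
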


    Here we refer to Section \ref{sec:radonSol} for the definition of solutions. Theorem \ref{thm:mainResult-statRadonProfile} will be shown with a fixed point argument using a uniform-in-time \( p \)-moment bound on the solutions of \eqref{equ:inelasticBoltzmannSelfSim}. For technical reasons, instead of \( \mathbb{R}^d \) we will consider the Radon measures on \( \mathbb{R}_c^{d} \), the compatification of \( \mathbb{R}^d \). The selection of appropriate self-similar parameter \( \beta \) will be made by a perturbation argument. As noted in \cite{BobylevEtAl20SSA}, in the case of elastic collision, only a limited selection of \( \beta \) yields a meaningful solution of \eqref{equ:inelasticBoltzmannSelfSim}. We will see shortly that the same restriction applies {to} our model of inelastic interactions.

    We note that the correct choice of self-similar parameter \( \beta \in \mathbb{R} \) that gives the self-similar profile depends on both the shearing matrix \( A \) and the restitution coefficient \( {e_\mathrm{res}} \), and contrary to the case of elastic collision, it is not necessarily positive in the case of uniform shear flow. In fact, \( \beta \) is chosen such that the inelastic cooling effect is balanced by the shear heating effect that comes from the altered shearing matrix. If inelastic cooling is the dominant effect, it would be necessary to choose a negative \( \beta \), which corresponds to an anti-drift force instead of the friction force that corresponds to a positive \( \beta \) (cf.~\cite{Villani06MGM}).

    For the simple uniform shear flow, we can determine the sign of \( \beta \); its proof will be given in Section \ref{sec:discussionOnUSF}.

    \begin{theorem}\label{thm:mainResult-signOfUSFBeta}
        Assume \( {e_\mathrm{res}} \in (0, 1) \). Let \( A = \alpha E_{12} \) be the uniform shear flow matrix with shearing parameter \( \alpha \geq 0 \) and \( \beta \in \mathbb{R} \) be the self-similar parameter from Theorem \ref{thm:mainResult-statRadonProfile} in the {small inelasticity} regime, then there exists a computable strictly positive constant \( \alpha_0>0 \) depending only on \( {e_\mathrm{res}}, d, c_{11} \) such that $\alpha_0\to 0$ as ${e_\mathrm{res}}\to 1-$ and  the following further holds:
        \begin{enumerate}
            \item \( \beta > 0 \) if \( \alpha > \alpha_0 \);
            \item \( \beta < 0 \) if \( \alpha < \alpha_0 \);
            \item \( \beta = 0 \) if \( \alpha = \alpha_0 \).
        \end{enumerate}
        Therefore, \( \alpha_0 \) is a critical shearing parameter where the shear heating effect and the inelastic cooling effect are balanced; in particular, for the balance case $\alpha = \alpha_0$, the self-similar profile determined by \eqref{equ:inelasticBoltzmannSelfSimStationary} is reduced to the steady solution of \eqref{equ:inelasticBoltzmann}.
    \end{theorem}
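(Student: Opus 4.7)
The plan is to reduce the sign determination to a scalar algebraic identity relating $\beta$ to $\alpha$, obtained from the second--moment tensor of the self--similar profile $G$ produced by Theorem~\ref{thm:mainResult-statRadonProfile}, and then to argue by monotonicity. Set $P_{ij} = \int_{\mathbb{R}^d} v_i v_j\, G(dv)$ and $E = \operatorname{tr} P$; both are finite by the finite--energy statement in Theorem~\ref{thm:mainResult-statRadonProfile}.

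Taking $\psi(v) = v_i v_j$ in the weak formulation \eqref{weakform} and using the angular integrals of $b(\sigma\cdot\hat{e})$, $b(\sigma\cdot\hat{e})\sigma$, and $b(\sigma\cdot\hat{e})\sigma^{\otimes 2}$ recorded in the introduction, a direct computation (splitting $v = V + u/2$, $v_* = V - u/2$ and writing the post--collisional relative velocity as $u' = (1-z)u + z|u|\sigma$) yields
\[
\int_{\mathbb{R}^d} v_i v_j\, Q_e(G,G)(v)\, dv = -\Lambda\, P_{ij} + M\, E\, \delta_{ij},
\]
where $\Lambda := \zeta + \tfrac{1}{2} z^2 d\, c_{11}$ and $M := \tfrac{1}{2} z^2 c_{11}$. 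The identity $dM - \Lambda = -\zeta$ reproduces the energy--loss relation from the introduction as a sanity check. Integration by parts against $v_i v_j$ turns $-\nabla_v\cdot(A_\beta v G)$ into $(A_\beta P + P A_\beta^{\mathsf T})_{ij}$, so the stationary equation \eqref{equ:inelasticBoltzmannSelfSimStationary} forces the tensor identity
\[
A\, P + P\, A^{\mathsf T} + (2\beta + \Lambda)\, P = M\, E\, I.
\]

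Specializing to $A = \alpha E_{12}$, the matrix $AP + PA^{\mathsf T}$ only couples indices inside the $(1,2)$--plane. The diagonal equations for $(i,j)=(k,k)$ with $k\geq 2$ give $P_{kk} = \frac{M E}{2\beta+\Lambda}$; the $(1,2)$--equation gives $P_{12} = -\frac{\alpha M E}{(2\beta+\Lambda)^2}$; and inserting these in $E = P_{11} + \sum_{k\geq 2} P_{kk}$ together with $dM - \Lambda = -\zeta$ produces the closed algebraic relation
\[
(2\beta + \zeta)(2\beta + \Lambda)^2 = 2\alpha^2 M.
\]
Taking $\beta = 0$ uniquely determines
\[
\alpha_0^2 = \frac{\zeta\, \Lambda^2}{2M} = \frac{\zeta\,\bigl(\zeta + \tfrac{1}{2} z^2 d\, c_{11}\bigr)^2}{z^2 c_{11}},
\]
which is strictly positive for ${e_\mathrm{res}} \in (0,1)$, is computable from ${e_\mathrm{res}}$, $d$, and $c_{11}$ (via $z$ and $\zeta$), and satisfies $\alpha_0\to 0$ as ${e_\mathrm{res}}\to 1^-$ since $\zeta = z(1-z)(b_0-b_1)\to 0$. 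To read off the sign, define $F(\beta) = (2\beta+\zeta)(2\beta+\Lambda)^2$; a direct differentiation gives $F'(0) = 2\Lambda(\Lambda + 2\zeta) > 0$, so $F$ is strictly increasing in a neighborhood of $\beta = 0$. The smallness hypotheses $\|A\|, 1-{e_\mathrm{res}}<\epsilon$ of Theorem~\ref{thm:mainResult-statRadonProfile} keep both $\alpha$ and $\beta$ of order $\epsilon$, so the relation $F(\beta) = 2\alpha^2 M$ selects a unique small root, and the sign of $\beta$ is forced to agree with the sign of $\alpha - \alpha_0$, giving (1)--(3). In the balanced case $\alpha = \alpha_0$ we have $\beta = 0$, so $A_\beta = A$ in \eqref{DefAbeta} and the self--similar profile reduces to a genuine steady solution of \eqref{equ:inelasticBoltzmann}.

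The main obstacle I expect is making the moment closure fully rigorous for the Radon--measure solution living on the compactification $\mathbb{R}^d_c$: one must ensure that $P$ and $E$ are honestly finite and that no mass escapes to infinity under pairing with the unbounded weight $v_i v_j$, which is precisely what the finite $p$--moment property with $p>2$ in Theorem~\ref{thm:mainResult-statRadonProfile} provides. Once $P$ is a bona fide finite matrix, the tensor identity and the monotonicity of $F$ close the argument; the choice of the small--$\beta$ branch of the cubic is pinned down by continuity at $\alpha=0$, ${e_\mathrm{res}}=1$, where $\beta=0$ is the only admissible self--similar parameter.
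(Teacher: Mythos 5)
Your proposal is correct and follows essentially the same route as the paper: the paper likewise closes the stationary second-moment equation for $A=\alpha E_{12}$ (Section \ref{sec:discussionOnUSF}), arriving at exactly your cubic $(2\beta+\zeta)(2\beta+\Lambda)^2=2\alpha^2 M$, which is \eqref{equ:usfParaRelation} in the shifted variable $\tilde\beta=\beta+\zeta/2$ with $\tilde c=dM$, and the same critical value \eqref{equ:balanceUSFParaWithoutSelfSim}. The only cosmetic difference is that the paper analyzes the full time-dependent moment ODE and the explicit real root $\gamma(\alpha)$ (globally increasing, the other roots being complex), whereas you argue directly on the stationary profile and use local strict monotonicity of the cubic near $\beta=0$ together with the smallness $|\beta|\lesssim\epsilon$; both close the sign argument, and your admissibility concern for the test function $v_iv_j$ is indeed settled by the finite $p$-moment, $p>2$, via Theorem \ref{thm:mildSolAsWeakSolAndProperties}.
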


    Note that for the Cauchy problem \eqref{equ:inelasticBoltzmann}, the usual fixed point approach can be adopted to prove the global existence of a class of measure-valued solutions even for any constant shearing matrix $A$ and any restitution coefficient \( {e_\mathrm{res}} \in (0, 1] \); see Theorem \ref{thm:existenceOfMildSol} later on. With this in hand, we focus on the large time convergence of solutions to those self-similar profiles obtained in Theorem \ref{thm:mainResult-statRadonProfile}. In fact, we have the following result.

    \begin{theorem} \label{thm:mainResult-convToProfile}
        Assume that \( {e_\mathrm{res}} \in (0, 1] \), and {the collision kernel \eqref{DefB}} is cutoff pseudo-Maxwellian. Then for all \( p \in (2, 4] \), there exists \( \epsilon > 0 \) such that if \( \|A\| < \epsilon \) and \( 1 - {e_\mathrm{res}} < \epsilon \), then the following holds:
        {let} \( f \in \mathit{C}^{0}([0, \infty), \mathcal{M}_+) \) be a measure-valued solution to \eqref{equ:inelasticBoltzmann} with initial data \( f_0 \in \mathcal{M}_+ \) being a nonnegative Radon measure with finite mass, zero momentum and finite \( p \)-moment.
        Then there exist \( \beta \in \mathbb{R} \), \( \lambda > 0 \), and a nonnegative probability measure \( G \in \mathcal{M}_+ \) with zero momentum and finite \( p \)-moment such that \( e^{d \beta t} f(t, e^{\beta t} v) \to \lambda^{-d} G(\lambda^{-1} v) \) on \( t \to \infty \) in the weak topology of \( \mathcal{M}_+ \), with a computable convergence rate in \( 2 \)-Toscani metric.
    \end{theorem}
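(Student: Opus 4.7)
The plan is to follow a self-similar Fourier/Toscani strategy of the Bolley--Carrillo and Bobylev--Cercignani type, adapted to the inelastic collision operator and the linear deformation. Let $f$ be the global measure-valued solution provided by Theorem \ref{thm:existenceOfMildSol}, and let $\beta$ be the self-similar parameter from Theorem \ref{thm:mainResult-statRadonProfile}. Set $h(t,v) := e^{d\beta t} f(t, e^{\beta t}v)$, so that $h$ solves \eqref{equ:inelasticBoltzmannSelfSim} with initial datum $f_0$. The first task is to propagate uniformly in time the $p$-moment estimate underlying Theorem \ref{thm:mainResult-statRadonProfile}, now along the evolution, giving $\sup_{t\ge 0}\int(1+|v|^p)\,h(t,\mathrm dv)<\infty$ and hence tightness. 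A special consequence of the pseudo-Maxwellian kernel is that the covariance $\Sigma_h(t)$ obeys a closed linear ODE with polynomial structure in the first two moments; under the smallness hypotheses this ODE has a unique attracting equilibrium $\Sigma_\infty$, and the rescaling parameter $\lambda>0$ is then fixed by $\lambda^2\int|v|^2 G(\mathrm dv)=\mathrm{tr}\,\Sigma_\infty$, so that $\Psi:=\lambda^{-d}G(\lambda^{-1}\cdot)$ is a stationary solution of \eqref{equ:inelasticBoltzmannSelfSim} whose mass, momentum, and asymptotic energy all match those of $h(t)$.

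Quantitative convergence is then extracted on the Fourier side. Setting $\Phi:=\mathcal F h$, $\widehat\Psi:=\mathcal F\Psi$, and $w:=\Phi-\widehat\Psi$, the decomposition \eqref{DefhatQ} and bilinearity give
\begin{equation*}
\partial_t w + A_\beta^{\mathsf T} k\cdot\nabla_k w \;=\; \widehat{Q_e^+}(w,\Phi) + \widehat{Q_e^+}(\widehat\Psi,w) - b_0\, w.
\end{equation*}
With $W:=w/|k|^2$ (a bounded function, because mass and momentum of $h(t)$ and $\Psi$ always coincide and the energy gap is controlled by the $p$-moment bound), the pointwise estimates $|\Phi|,|\widehat\Psi|\le 1$ combined with the Bobylev identity
\begin{equation*}
\int_{\mathbb S^{d-1}} b(\cos\vartheta)\,\frac{|k^+|^2+|k^-|^2}{|k|^2}\,\mathrm d\sigma \;=\; b_0-\zeta
\end{equation*}
and the transport-induced weight $2\beta+2(Ak\cdot k)/|k|^2$ arising from $A_\beta^{\mathsf T} k\cdot\nabla_k |k|^{-2}$ yield, along the characteristics of the drift and after taking $\sup_{k\ne 0}$, a Gronwall-type inequality
\begin{equation*}
\frac{\mathrm d}{\mathrm dt}\, d_2\bigl(h(t),\Psi\bigr) \;\le\; -\mu\, d_2\bigl(h(t),\Psi\bigr),
\end{equation*}
with $\mu \ge \zeta+2\beta-2\|A\|>0$ guaranteed by the small-shear, nearly-elastic smallness. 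Integration gives the computable exponential rate in the $2$-Toscani metric, and since convergence in $d_2$ combined with the $p$-moment tightness upgrades to weak-$*$ convergence of Radon measures, the theorem follows.

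The principal difficulty is to produce this clean contraction despite the fact that the transport term $A_\beta^{\mathsf T}k\cdot\nabla_k$ has no definite sign and by itself does not shrink the Toscani seminorm; the gain is extracted only after balancing the drift-induced weight against the $-\zeta$ from the inelastic collisional gain, and the nearly-elastic, small-shear regime is needed precisely to make this combination strictly positive. A secondary subtlety is the a priori identification of the scale $\lambda$ (and therefore of the target profile $\Psi$) \emph{before} any Fourier contraction can be formulated: this is resolved by first analysing the autonomous covariance ODE to produce $\Sigma_\infty$, and then invoking the rescaling invariance of \eqref{equ:inelasticBoltzmannSelfSimStationary} to promote the profile $G$ of Theorem \ref{thm:mainResult-statRadonProfile} to the family $\{\lambda^{-d}G(\lambda^{-1}\cdot)\}_{\lambda>0}$ from which the correct $\lambda$ is selected.
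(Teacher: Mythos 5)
Your overall strategy (pass to the self-similar frame, identify $\lambda$ from the second-moment ODE, then contract in a Toscani-type metric on the Fourier side) is in the same spirit as the paper, but the central quantitative step fails: the claimed Gronwall contraction in the $2$-Toscani metric with rate $\mu \geq \zeta + 2\beta - 2\|A\| > 0$ is not available. Writing $\tilde{\beta} = \beta + \zeta/2$, the crude estimate you propose (the $\mathcal{L}_e$-Lipschitz bound giving $b_0 - \lambda_2 = b_0 - \zeta$, combined with the worst-case drift weight $2\|A_\beta\|$ along characteristics) yields exactly the rate $\zeta + 2\beta - 2\|A\| = 2(\tilde{\beta} - \|A\|)$, and this quantity is nonpositive in the regime of the theorem. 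Indeed, taking the trace of \eqref{stableeq} with the unit-trace positive definite stationary matrix $B$ gives $\tilde{\beta} = -\mathrm{tr}(AB) \leq \|A\|$, and for the USF case \eqref{equ:usfParaRelationRealSol} gives $\tilde{\beta} = \gamma \leq \alpha/(2\sqrt{2d}) \ll \alpha = \|A\|$, i.e. $\tilde{\beta} = O(\|A\|^2)$: the small-shear, nearly-elastic smallness works \emph{against} your positivity claim rather than for it. This is precisely why the hypothesis demands finite $p$-moments with $p>2$: since $\lambda_p$ is strictly increasing in $p$ with $\lambda_2 = \zeta$, only for $p>2$ does one get the strict margin $\lambda_p/p - \zeta/2 > 0$ near $z=1$, which makes $\lambda_p + p(\beta - \|A\|) > 0$ achievable under smallness; no such margin exists at $p=2$. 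There is also a structural obstruction to your scheme independent of constants: $h(t)$ and the fixed target $\Psi$ have different second moments at every finite time, and that discrepancy relaxes only at the rate $\nu$ given by the spectral gap of the moment equation \eqref{equ:secondMomentEquWithSelfSim}; a pure collisional/drift contraction of $\|\mathcal{F}h(t)-\widehat{\Psi}\|_2$ cannot beat this, so any correct rate must involve $\nu$ (as the paper's $\eta$ in \eqref{Defeta} does).

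The paper circumvents both problems as follows: it constructs the Fourier profile $\Phi$ directly as a fixed point of the Duhamel map in the space $\mathcal{S}_p$, $p\in(2,4]$; it compares $\varphi(t)=\mathcal{F}h(t)$ not with the profile but with the time-dependent Gaussian $\phi(t,k)=\exp(-\tfrac12 C(t):k^{\otimes 2})$ carrying the solution's exact second moments, proving $|\varphi-\phi|\leq C|k|^p$ uniformly in time via the linearized comparison principle (this is where $\lambda_p + p(\beta-\|A\|)>0$ and the $O(|k|^4)$ error enter); it then uses $\|C(t)-\lambda^2 B\|\leq Ce^{-\nu t}$ to compare $\phi$ with $\Phi(\lambda\,\cdot)$, and finally propagates the time-$T/2$ bound to time $T$ with the linearized semigroup, producing the rate $\eta=\tfrac12\min\{\lambda_p+p(\beta-\|A\|),\,\nu+\zeta+2(\beta-\|A\|)\}$, whose second entry is positive only because of the additional $\nu$. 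To repair your proof you would need to replace the direct $2$-Toscani Gronwall argument by a two-tier argument of this type (a $p$-Toscani contraction for some $p>2$ plus the exponential relaxation of the second moments), rather than relying on $\zeta+2\beta-2\|A\|>0$, which is false.
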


    Here the Toscani metric is introduced in \eqref{def.pTd} later on, see \cite{CTsurvey} for a review of this topic. Theorem \ref{thm:mainResult-convToProfile} will be shown by studying the equation in the frequency space via Fourier transform. We will first show that the same results on Radon measures on \( \mathbb{R}_c^{d} \) have analogs on the characteristic functions of probability measures on \( \mathbb{R}^d \), with additional convergence results in Toscani metrics. The desired result then follows by reverting the Fourier transform with an appropriate scaling. {Moreover, regarding the stationary self-similar profile $G$, we can obtain its existence in both Theorem \ref{thm:existenceOfStatProfileWithSmallness} and Theorem \ref{thm:existenceOfBobStatProfileWithFiniteEnergy} using the different approaches. In the meantime, Theorem \ref{thm:existenceOfBobStatProfileWithFiniteEnergy} also gives the uniqueness of $G$ in an appropriate function space in the Fourier framework.}

    In the case of simple USF \eqref{DefUSF}, we note from Theorem \ref{thm:mainResult-signOfUSFBeta} that for a given ${e_\mathrm{res}}$ in the nearly elastic regime, if the shear parameter \( \alpha \) is exactly \( \alpha_0 \), then Theorem \ref{thm:mainResult-convToProfile} above gives the weak convergence of the solution to the initial value problem on \eqref{equ:inelasticBoltzmann} with appropriate initial data towards a non-degenerate stationary profile defined via \eqref{equ:inelasticBoltzmannSelfSimStationary} with self-similar parameter \( \beta = 0 \), which is just a stationary solution of \eqref{equ:inelasticBoltzmann}. Moreover, temperature of solutions increases to infinity for $\beta>0$ corresponding to the shear-heating dominated case and decreases to zero for $\beta<0$ corresponding to the inelastic-cooling dominated case, both {exponentially fast} as $t\to \infty$.

    We divide the rest of this paper into two parts. {In the first part of Section \ref{sec:radonSol},} we treat \eqref{equ:inelasticBoltzmann} in the framework of nonnegative Radon measures. We first establish {the existence and uniqueness} of global in time solutions as well as propagation of moment bounds. To obtain the existence of a non-degenerate stationary profile, we analyze the behavior of the second moment of Radon solutions in Section \ref{sec:secondMomentEqu}, together with the special case of USF in Section \ref{sec:discussionOnUSF}. The stationary Radon profile is to be obtained in section \ref{sec:statRadonProfile}. In the second part of Section \ref{sec:fourierSol}, we treat \eqref{equ:inelasticBobylev} in frequency space for analogous results on existence of both global in time solutions and self-similar profiles in Section \ref{sec:statFourierProfile}, in particular obtaining the weak convergence of global solutions to self-similar profiles.

    \section{Radon measure-valued solution} \label{sec:radonSol}

    In this part, we consider the mild solutions to \eqref{equ:inelasticBoltzmann} that are Radon measure-valued, see \cite[Chapter 7]{Folland}. We first give the formulation for mild solutions in the inelastic case, which is consistent with \cite{JamesEtAl20LTA} for the classical case. 
    Denote \( \mathcal{M}_+(\mathbb{R}_c^{d}) \) the space of nonnegative Radon measures on \( \mathbb{R}_c^{d} \), which is the one-point compactification of \( \mathbb{R}^d \) appending the point \( \infty \), equipped with {the total variation distance}
    \begin{align}\label{DefM}
    \left\|f\right\|_{\mathcal{M}} = \int_{\mathbb{R}_c^{d}}^{} |f|(\mathrm{d}{v}).
    \end{align}
    For \( s > 0 \), let the \( s \)-moment norm of \(f \in \mathcal{M}_+ \) be
    \begin{align}\label{Defs}
    \|f\|_{s} = \int_{\mathbb{R}_c^{d}}^{} (1 + |v|^s) |f|(\mathrm{d}{v}).
    \end{align}
    For simplicity of notation, we write \( f \mathrm{d}{v} \) as if \( f \) is a function. The equation \eqref{equ:inelasticBoltzmann} should now be understood in sense of measure. More precisely, we will focus on the following notion of solutions to \eqref{equ:inelasticBoltzmann}.

    \begin{definition}[Weak solution]
        We say that \( f \in \mathit{C}([0, \infty), \mathcal{M}_+) \) is a weak solution of \eqref{equ:inelasticBoltzmann} {with initial condition} \( f(0,v) = f_0(v) \in \mathcal{M}_+ \) if for any \( T > 0 \) and for any test function {\( \psi \in \mathit{C}^1([0, T], \mathit{C}^{1}(\mathbb{R}_c^{d})) \)},
        \begin{equation} \label{equ:inelasticBoltzmann_weakForm}
            \begin{aligned}
                &\int_{\mathbb{R}^d}{\psi(T,v) f(T, \mathrm{d}{v})} - \int_{\mathbb{R}^d}{\psi(0,v) f_0(\mathrm{d}{v})} \\
                =&-\int^T_0\mathrm{d}{t}\int_{\mathbb{R}^d}{
                    {\partial_{t}}\psi\, f(t, \mathrm{d}{v})
                }
                - \int^T_0dt
                \int_{\mathbb{R}^d}{
                    (A v \cdot { \nabla_{v} }\psi) f(t, \mathrm{d}{v})
                }
                \\
                &+ \frac{1}{2} \int^T_0 \mathrm{d}{t}
                \int_{\mathbb{R}^d}
                \int_{\mathbb{R}^d}
                \int_{\mathbb{S}^{d-1}}\mathrm{d}{\sigma}\,
                b\big(\sigma \cdot \frac{v - v_*}{|v - v_*|}\big) f(t, \mathrm{d}{v}) f(t, \mathrm{d}{v_*}) (
                \psi(t,v') + \psi(t,v_*') - \psi(t,v) - \psi(t,v_*)
                ).
            \end{aligned}
        \end{equation}
    \end{definition}

    Under the cutoff assumption, we can write the corresponding strong form of the inelastic collision operator as
    \begin{equation} \label{DefQe+}
        \begin{aligned}
            Q_e(f, f) &=  Q_e^+(f, f) - Q_e^-(f, f), \\
            Q_e^+(f, f)(v) &= {\int_{\mathbb{R}^d}^{}} 
                {\int_{\mathbb{S}^{d-1}}^{}} 
                    b^+(\cos\theta) {e_\mathrm{res}}^{-1} f(v'') f(v_*'')
                 \, {\mathrm{d}{\sigma}}
            {\mathrm{d}{v_*}}, \\
            Q_e^-(f, f)(v) &= {\int_{\mathbb{R}^d}^{}} 
                {\int_{\mathbb{S}^{d-1}}^{}} 
                    b(\cos\theta) f(v) f(v_*)
                 \, {\mathrm{d}{\sigma}}
            {\mathrm{d}{v_*}}
            = {b_0 f(v) {\int_{\mathbb{R}^d}^{}} f(v_*) \, {\mathrm{d}{v_*}},}
        \end{aligned}
    \end{equation}
    where
    \begin{equation} \label{Defstrongv}
        \begin{aligned}
            v'' &= \frac{v + v_*}{2} + \frac{1 - z''}{2} (v - v_*) + \frac{z''}{2} |v - v_*| \sigma ,\\
            v_*'' &= \frac{v + v_*}{2} - \frac{1 - z''}{2} (v - v_*) - \frac{z''}{2} |v - v_*| \sigma,
        \end{aligned}
    \end{equation}
    are {image of the inverse of the scattering mapping}, \( z'' = \frac{1}{2} (1 + {e_\mathrm{res}}^{-1}) = z / {e_\mathrm{res}} \), and
    \begin{equation*}
        b^+(c) = b(\frac{(1 + {e_\mathrm{res}}^2) c - (1 - {e_\mathrm{res}}^2)}{(1 + {e_\mathrm{res}}^2) - (1 - {e_\mathrm{res}}^2) c}) \sqrt{\frac{2}{(1 + {e_\mathrm{res}}^2) - (1 - {e_\mathrm{res}}^2) c}},
    \end{equation*}
    for $-1\leq c\leq 1$. The derivation of the strong form \eqref{DefQe+} from the weak form \eqref{weakform} is strongly based on the inelastic reflection map method as in the appendix of \cite{CarlenEtAl09SCT}, where the $\omega$ representation of the post-collision velocities plays a crucial role. If one defines
    $u:=v-v_*$ and $\omega:=\frac{u-|u|\sigma}{|u-|u|\sigma|}$
    such that
    $$
    v''=v-\frac{1+{e_\mathrm{res}}}{2{e_\mathrm{res}}}((v-v_*)\cdot\omega)\omega,\quad v''_*=v_*+\frac{1+{e_\mathrm{res}}}{2{e_\mathrm{res}}}((v-v_*)\cdot\omega)\omega,
    $$
    and also denotes $\tilde{b}$ in the sense that $b(c)=\tilde{b}(\sqrt{(1-c)/2})(2\sqrt{(1-c)/2})^{-1}$, then it holds that
    \begin{align}
        b(\cos\theta)\mathrm{d}{\sigma}&=2\tilde{b}(\omega\cdot\frac{v - v_*}{|v - v_*|})\mathrm{d}{\omega},\label{sigmaomega1}\\
        \tilde{b}(\omega\cdot\frac{v'' - v''_*}{|v'' - v''_*|})\mathrm{d}{\omega}&=\frac{1}{2}b(\frac{(1 + {e_\mathrm{res}}^2) \cos\theta - (1 - {e_\mathrm{res}}^2)}{(1 + {e_\mathrm{res}}^2) - (1 - {e_\mathrm{res}}^2) \cos\theta}) \sqrt{\frac{2}{(1 + {e_\mathrm{res}}^2) - (1 - {e_\mathrm{res}}^2) \cos\theta}}\mathrm{d}{\sigma}. \label{sigmaomega2}
    \end{align}
    One may refer to \cite[A.3]{CarlenEtAl09SCT} for detailed derivation, see also \cite{GambaEtAl04BED, CarrilloEtAl21RDK}.
    With the help of the strong form, we give another notion of solutions.

    \begin{definition}[Mild solution]
	{Let ${\mathit{L}^{1}}([0, T], \mathcal{M}_+)$ denote the space of all real-valued functions $t\rightarrow\|f(t)\|_\mathcal{M}$ that
		are integrable on $[0,T]$.} For \( g, h_0 \in {\mathit{L}^{1}}([0, T], \mathcal{M}_+) \) and \( 0 \leq t_0 \leq t \leq T < \infty \), we define
	\begin{equation} \label{DefS}
		S_g(t, t_0) h_0(v)
		\coloneqq e^{(t - t_0) \mathrm{tr}(A)} e^{-b_0 {\int_{t_0}^{t}} \left\|g(\tau)\right\|_{\mathcal{M}} \, {\mathrm{d}{\tau}}} h_0(e^{(t - t_0) A} v),
	\end{equation}
	such that \( h(t, v) = S_g(t, t_0) h_0(v) \) solves
	\begin{equation*}
		{\partial_{t}} h - { \nabla_{v} } \cdot \!(A v h) = -b_0 h {\int_{\mathbb{R}^d}^{}} g(t,v) \, {\mathrm{d}{v}} = -b_0 h \left\|g(t)\right\|_{\mathcal{M}},
	\end{equation*}
	{in the sense of measure} on \( t \in [t_0, T] \) with initial condition \( h(t_0, v) = h_0(v)\).  Then, {we say that} \( f \in \mathit{C}([0, \infty), \mathcal{M}_+(\mathbb{R}_c^{d})) \) is a mild solution of \eqref{equ:inelasticBoltzmann} {with initial condition} \( f_0 \in \mathcal{M}_+(\mathbb{R}_c^{d}) \) if
	\begin{equation*}
		f(t, v) = S_f(t, 0) f_0(v) + {\int_{0}^{t}} 
		S_f(t, \tau) Q_e^+(f,f)(\tau,v)
		\, {\mathrm{d}{\tau}}.
	\end{equation*}
\end{definition}

{
	\begin{remark}	
		We point out $Q_e^\pm(f,f)$ defines measures in $\mathcal{M}_+$ by the continuity and integrability of $b(\cos \theta).$ Here the sense of measure is understood integrating by parts and passing the derivative to the test function. Note that if \( f \) is a mild solution, then \eqref{equ:inelasticBoltzmann} holds {in the sense of measure,} which implies that \eqref{equ:inelasticBoltzmann_weakForm} is satisfied on appropriate test functions, and thus \( f \) is also a weak solution (see also \cite{JamesEtAl17SSP}).
	\end{remark}
}

    We first prove some useful properties of \( Q_e^+ \) and \( S \).
    
    \begin{lemma} \label{lem:propertyOfGainPartAndLinearPartSolver}
        If \( f, g \in \mathcal{M}_+(\mathbb{R}_c^{d}) \), then
        \begin{align}
            \left\|Q_e^+(f, f)\right\|_{\mathcal{M}}
            &= b_0 \left\|f\right\|_{\mathcal{M}}^2, \label{boundQ}\\
            \left\|Q_e^+(f, f) - Q_e^+(g, g)\right\|_{\mathcal{M}}
            &\leq b_0 (\left\|f\right\|_{\mathcal{M}}  + \left\|g\right\|_{\mathcal{M}}) \left\|f - g\right\|_{\mathcal{M}}. \label{boundQ-Q}
        \end{align}
        Also, if \( h_1, h_2 \in {\mathit{L}^{1}}([0, T], \mathcal{M}_+(\mathbb{R}_c^{d})) \), then for \( T \geq t \geq t' \geq 0 \),
        it holds that
        \begin{align}
            \left\|S_{h_1}(t, t')f\right\|_{\mathcal{M}}
            &= \exp(
            -b_0 {\int_{t'}^{t}} \left\|h_1(\tau)\right\|_{\mathcal{M}} \! \, {\mathrm{d}{\tau}}
            ) {\int_{\mathbb{R}^d}^{}} f(v) \, {\mathrm{d}{v}}
            \leq \left\|f\right\|_{\mathcal{M}},  \label{boundSf}\\
            \left\|S_{h_1}(t, t')f - S_{h_2}(t, t') g\right\|_{\mathcal{M}}
            &\leq \left\|f - g\right\|_{\mathcal{M}} + \frac{b_0}{2} (\left\|f\right\|_{\mathcal{M}}  + \left\|g\right\|_{\mathcal{M}}) {\int_{t'}^{t}} \left\|h_1(\tau) - h_2(\tau)\right\|_{\mathcal{M}} \, {\mathrm{d}{\tau}}.\label{boundSfg}
        \end{align}
        Furthermore, if \( \|f\|_s, \|g\|_s < \infty \) for some \( s > 0 \), then one has
        \begin{align}
            \|{Q_e^+(f,f)}\|_{s}
            &\leq C      \left\|f\right\|_{\mathcal{M}}  \|f\|_{s}, \label{boundQs}\\
            \|Q_e^+(f,f) - Q_e^+(g,g)\|_{s}
            &\leq C (\|f\|_s + \|g\|_s) \|f - g\|_{s}, \label{boundQ-Qs}\\
            \|S_{h_1}(t, t')f\|_{s}
            &\leq C_T\|f\|_s, \label{boundSfs}
        \end{align}
        and
        \begin{equation} \label{boundSfgs}
            \|S_{h_1}(t, t')f - S_{h_2}(t, t') g\|_{s}
            \leq C \|f - g\|_{s} + C(\|f\|_s + \|g\|_s) {\int_{t'}^{t}} \left\|h_1(\tau) - h_2(\tau)\right\|_{\mathcal{M}} \, {\mathrm{d}{\tau}},
        \end{equation}
        where the constant $C$ depends only on \( A, s, T, b_0 \).
    \end{lemma}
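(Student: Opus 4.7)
The plan is to handle the six estimates in two separate groups: the $Q_e^+$ bounds \eqref{boundQ}, \eqref{boundQ-Q}, \eqref{boundQs}, \eqref{boundQ-Qs} use the weak form \eqref{weakform} together with a bilinear extension of \eqref{DefQe+}; the $S$-estimates \eqref{boundSf}, \eqref{boundSfg}, \eqref{boundSfs}, \eqref{boundSfgs} are extracted directly from the explicit representation \eqref{DefS} by a linear change of variables and a symmetric splitting trick.

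For \eqref{boundQ}, I would apply the weak form with the constant test function $\psi\equiv 1$ to $Q_e(f,f)=Q_e^+(f,f)-Q_e^-(f,f)$. Since the collisional bracket $\psi(v')+\psi(v_*')-\psi(v)-\psi(v_*)$ vanishes, the integral of $Q_e(f,f)$ is zero, and the explicit identity $\|Q_e^-(f,f)\|_{\mathcal{M}}=b_0\|f\|_{\mathcal{M}}^2$ from \eqref{DefQe+} yields the claim. The extension to the bilinear identity $\|Q_e^+(\varphi,\psi)\|_{\mathcal{M}}\le b_0\|\varphi\|_{\mathcal{M}}\|\psi\|_{\mathcal{M}}$ for signed measures is obtained by Jordan decomposition, or more transparently from the strong form \eqref{DefQe+} after the $\omega$-representation change of variables \eqref{sigmaomega1}--\eqref{sigmaomega2}. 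Then \eqref{boundQ-Q} follows at once from the algebraic bilinearization $Q_e^+(f,f)-Q_e^+(g,g)=Q_e^+(f-g,f)+Q_e^+(g,f-g)$. For the $s$-moment counterparts \eqref{boundQs}--\eqref{boundQ-Qs}, I would insert the test function $1+|v|^s$ and invoke the elementary pointwise bound $|v'|^s\le C_s(|v|^s+|v_*|^s)$, which holds because $|v'|\le|v|+|v_*|$ on the collisional manifold.

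For the $S$-estimates, the formula \eqref{DefS} is completely explicit. The identity in \eqref{boundSf} is obtained by the linear change of variables $w=e^{(t-t')A}v$, whose Jacobian $e^{(t-t')\mathrm{tr}(A)}$ cancels the prefactor, leaving only the exponential factor; the inequality then follows from non-negativity of $b_0\int \|h_1\|_{\mathcal{M}}$. The $s$-moment bound \eqref{boundSfs} uses the same substitution with $|e^{-(t-t')A}w|^s\le e^{sT\|A\|}|w|^s$, producing a $T$-dependent constant. For the two Lipschitz statements \eqref{boundSfg} and \eqref{boundSfgs}, the critical point is the symmetric splitting
\[
S_{h_1}(t,t')f-S_{h_2}(t,t')g=\tfrac{1}{2}\bigl(S_{h_1}+S_{h_2}\bigr)(t,t')(f-g)+\tfrac{1}{2}\bigl(S_{h_1}-S_{h_2}\bigr)(t,t')(f+g).
\]
The first summand is controlled by $\|f-g\|_{\mathcal{M}}$ (resp.\ $\|f-g\|_s$) thanks to the linearity and mass-nonincrease of $S$ already established in \eqref{boundSf}--\eqref{boundSfs}; the second uses the elementary inequality $|e^{-a}-e^{-b}|\le|a-b|$ with $a=b_0\int_{t'}^t\|h_1\|_{\mathcal{M}}$, $b=b_0\int_{t'}^t\|h_2\|_{\mathcal{M}}$, which produces precisely the prefactor $\tfrac{b_0}{2}(\|f\|_{\mathcal{M}}+\|g\|_{\mathcal{M}})$ appearing in \eqref{boundSfg}.

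The main obstacle I anticipate is organizing the bilinear mass identity for $Q_e^+$ correctly at the level of total variation on $\mathbb{R}_c^d$, since the inelastic $\omega$-representation \eqref{sigmaomega1}--\eqref{sigmaomega2} is what converts the strong-form expression for $Q_e^+$ into one whose integral over $v$ is a clean tensor product; without this the polarization argument for \eqref{boundQ-Q} and \eqref{boundQ-Qs} is not immediate. A secondary but delicate point is arranging the symmetric decomposition above so as to recover the factor $\tfrac12$ in \eqref{boundSfg}, which the naive triangle inequality (adding and subtracting only $S_{h_1}g$) misses; this symmetric form is what makes the fixed-point argument in the subsequent existence proof close cleanly. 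Everything else reduces to routine estimates involving the Jacobian of $e^{(t-t')A}$ and convexity bounds on $|v'|^s$.
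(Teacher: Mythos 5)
Your proposal is correct and follows essentially the same route as the paper: the mass identity via the collision invariant $1$, the change of variables through the inelastic $\omega$-representation \eqref{sigmaomega1}--\eqref{sigmaomega2} with Jacobian ${e_\mathrm{res}}^{-1}$ for the gain-term and moment estimates, and the symmetric splitting with $|e^{-a}-e^{-b}|\leq|a-b|$ that produces the factor $\tfrac{b_0}{2}(\|f\|_{\mathcal M}+\|g\|_{\mathcal M})$ in \eqref{boundSfg}. Your bilinearization $Q_e^+(f,f)-Q_e^+(g,g)=Q_e^+(f-g,f)+Q_e^+(g,f-g)$ is only a cosmetic repackaging of the paper's triangle inequality applied to $|f(v)f(v_*)-g(v)g(v_*)|$ inside the integral.
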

    
    \begin{proof}
        Noticing that \( Q_e^+(f,f) \geq 0 \) and \( S_g(t, t')f \geq 0 \) by the definitions of $Q_e^+$ and $S$ in \eqref{DefQe+} and \eqref{DefS}, by the fact that $1$ is a collision invariant, we have \eqref{boundQ} by
        \begin{align*}
            \left\|Q_e^+(f, f)\right\|_{\mathcal{M}}
            &= {\int_{\mathbb{R}^d}^{}} Q_e(f, f)
             \, {\mathrm{d}{v}} + {\int_{\mathbb{R}^d}^{}}    Q_e^-(f,f)
             \, {\mathrm{d}{v}} \\
            &= {\int_{\mathbb{R}^d}^{}} 
                {\int_{\mathbb{R}^d}^{}} 
                    {\int_{\mathbb{S}^{d-1}}^{}} 
                        b(\cos\theta) f(v) f(v_*)
                     \, {\mathrm{d}{\sigma}}
                {\mathrm{d}{v_*}}
            {\mathrm{d}{v}}\\
            &= b_0 \left\|f\right\|_{\mathcal{M}}^2.
        \end{align*}
        Next, it is direct to get
        \begin{align*}
            \left\|Q_e^+(f, f) - Q_e^+(g, g)\right\|_{\mathcal{M}}
            &\leq {\int_{\mathbb{R}^d}^{}} 
                {\int_{\mathbb{R}^d}^{}} 
                    {\int_{\mathbb{S}^{d-1}}^{}} 
                        b^+(\cos\theta) {e_\mathrm{res}}^{-1}
                        |f(v'') f(v_*'') - g(v'') g(v_*'')|
                     \, {\mathrm{d}{\sigma}}
                {\mathrm{d}{v_*}}
            {\mathrm{d}{v}}.
        \end{align*}
        From the definition of $v'',v_*''$ in \eqref{Defstrongv}, a straight-forward calculation gives that the Jacobian matrix for the transformation $(v,v_*)\mapsto(v'',v_*'')$ satisfies
        \begin{align*}
            |J|={e_\mathrm{res}}^{-1}.
        \end{align*}
        Then we use changes of variables, \eqref{sigmaomega1} and \eqref{sigmaomega2} to get
        \begin{align*}
            \left\|Q_e^+(f, f) - Q_e^+(g, g)\right\|_{\mathcal{M}}
            &\leq
            {\int_{\mathbb{R}^d}^{}} 
                {\int_{\mathbb{R}^d}^{}} 
                    {\int_{\mathbb{S}^{d-1}}^{}} 
                        b(\cos\theta)
                        |f(v) f(v_*) - g(v) g(v_*)|
                     \, {\mathrm{d}{\sigma}}
                {\mathrm{d}{v_*}}
            {\mathrm{d}{v}} \\
            &\leq b_0 {\int_{\mathbb{R}^d}^{}} 
                {\int_{\mathbb{R}^d}^{}} 
                    (|f(v)| |f(v_*) - g(v_*)| + |g(v_*)| |f(v) - g(v)|)
                 \, {\mathrm{d}{v_*}}
            {\mathrm{d}{v}} \\
            &= b_0 (\left\|f\right\|_{\mathcal{M}} + \left\|g\right\|_{\mathcal{M}}) \left\|f - g\right\|_{\mathcal{M}},
        \end{align*}
        which implies \eqref{boundQ-Q}.
        Moreover, \eqref{boundSf} follows from
        \begin{align*}
            \left\|S_{h_1}(t, t')f\right\|_{\mathcal{M}}
            = {\int_{\mathbb{R}^d}^{}} S_{h_1}(t, t')f(v) \, {\mathrm{d}{v}}
            &= \exp\big(
            (t - t') \mathrm{tr}(A)
            - b_0 {\int_{t'}^{t}} \left\|h_1(\tau)\right\|_{\mathcal{M}} \! \, {\mathrm{d}{\tau}}
            \big) {\int_{\mathbb{R}^d}^{}} f(e^{(t - t') A} v) \, {\mathrm{d}{v}} \\
            &= \exp\big(
            -b_0 {\int_{t'}^{t}} \left\|h_1(\tau)\right\|_{\mathcal{M}} \! \, {\mathrm{d}{\tau}}
            \big) {\int_{\mathbb{R}^d}^{}} f(v) \, {\mathrm{d}{v}} \\
            &\leq \left\|f\right\|_{\mathcal{M}}.
        \end{align*}
        Using changes of variables and the fact that \( |e^{-x_1} - e^{-x_2}| \leq |x_1 - x_2| \) on \( x_1, x_2 \geq 0 \), we get \eqref{boundSfg} by
        \begin{align*}
            &\quad \left\|S_{h_1}(t, t')f - S_{h_2}(t, t') g\right\|_{\mathcal{M}} \\
            &= e^{(t - t') \mathrm{tr}(A)} {\int_{\mathbb{R}^d}^{}} \Big|
                e^{-b_0 {\int_{t'}^{t}} \left\|h_1(\tau)\right\|_{\mathcal{M}} \! \, {\mathrm{d}{\tau}}} f(e^{(t - t') A} v)
                - e^{-b_0 {\int_{t'}^{t}} \left\|h_2(\tau)\right\|_{\mathcal{M}}\! \, {\mathrm{d}{\tau}}} g(e^{(t - t') A} v)
                \Big| \, {\mathrm{d}{v}} \\
            &\leq {\int_{\mathbb{R}^d}^{}} (
                \frac{f(v) + g(v)}{2}
                b_0 {\int_{t'}^{t}} 
                    \big|\left\|h_1(\tau)\right\|_{\mathcal{M}} - \left\|h_2(\tau)\right\|_{\mathcal{M}}\big|
                 \, {\mathrm{d}{\tau}}
                + |f(v) - g(v)|
                ) \, {\mathrm{d}{v}} \\
            &\leq \frac{b_0}{2} (\left\|f\right\|_{\mathcal{M}} + \left\|g\right\|_{\mathcal{M}}) {\int_{t'}^{t}} \left\|h_1(\tau) - h_2(\tau)\right\|_{\mathcal{M}} \, {\mathrm{d}{\tau}} + \left\|f - g\right\|_{\mathcal{M}}.
        \end{align*}

        Noticing \( |v'|^{s} \leq (|v| + |v_*|)^{s} \leq C( |v|^{s} + |v_*|^{s}) \) and \( |v|^{s} \leq C( |v''|^{s} + |v_*''|^{s}) \) for $s\geq 0$ and some constant $C$ which depends only on $s$, by changes of variables, \eqref{sigmaomega1} and \eqref{sigmaomega2}, then \eqref{boundQs} holds from
        \begin{align*}
            \|Q_e^+(f,f)\|_{s}
            &\leq C{\int_{\mathbb{R}^d}^{}} 
                {\int_{\mathbb{R}^d}^{}} 
                    {\int_{\mathbb{S}^{d-1}}^{}} 
                        (2 + |v''|^s + |v_*''|^s) b^+(\cos\theta) {e_\mathrm{res}}^{-1} f(v'') f(v_*'')
                     \, {\mathrm{d}{\sigma}}
                {\mathrm{d}{v_*}}
            {\mathrm{d}{v}} \\
            &= C{\int_{\mathbb{R}^d}^{}} 
                {\int_{\mathbb{R}^d}^{}} 
                    {\int_{\mathbb{S}^{d-1}}^{}} 
                        (2 + |v|^s + |v_*|^s) b(\cos\theta) f(v) f(v_*)
                     \, {\mathrm{d}{\sigma}}
                {\mathrm{d}{v_*}}
            {\mathrm{d}{v}} \\
            &\leq C \left\|f\right\|_{\mathcal{M}} \|f\|_s.
        \end{align*}
        Similarly, we yield \eqref{boundQ-Qs} by
        \begin{align*}
            &\quad\|Q_e^+(f,f) - Q_e^+(g,g)\|_{s} \\
            &\leq C {\int_{\mathbb{R}^d}^{}} 
                {\int_{\mathbb{R}^d}^{}} 
                    {\int_{\mathbb{S}^{d-1}}^{}} 
                        (2 + |v''|^{s} + |v_*''|^{s}) b^+(\cos\theta) {e_\mathrm{res}}^{-1} |f(v'') f(v_*'') - g(v'') g(v_*'')|
                     \, {\mathrm{d}{\sigma}}
                {\mathrm{d}{v_*}}
            {\mathrm{d}{v}} \\
            &=C {\int_{\mathbb{R}^d}^{}} 
                {\int_{\mathbb{R}^d}^{}} 
                    {\int_{\mathbb{S}^{d-1}}^{}} 
                        (2 + |v|^{s} + |v_*|^{s}) b(\cos\theta) |f(v) f(v_*) - g(v) g(v_*)|
                     \, {\mathrm{d}{\sigma}}
                {\mathrm{d}{v_*}}
            {\mathrm{d}{v}} \\
            &\leq C {\int_{\mathbb{R}^d}^{}} 
                {\int_{\mathbb{R}^d}^{}} 
                    (2 + |v|^{s} + |v_*|^{s})
                    (
                    f(v) |f(v_*) - g(v_*)|
                    + g(v_*) |f(v) - g(v)|
                    )
                 \, {\mathrm{d}{v_*}}
            {\mathrm{d}{v}} \\
            &\leq C (\|f\|_s + \|g\|_s) \|f - g\|_{s}.
        \end{align*}
        One can further prove \eqref{boundSfs} and \eqref{boundSfgs} by
        \begin{align*}
            \|S_{h_1}(t, t')f\|_{s}
            &= {\int_{\mathbb{R}^d}^{}} 
                (1 + |v|^{s})
                e^{(t - t') \mathrm{tr}(A)} e^{-b_0 {\int_{t'}^{t}} \left\|h_1(\tau)\right\|_{\mathcal{M}} \, {\mathrm{d}{\tau}}}
                f(e^{(t - t') A} v)
             \, {\mathrm{d}{v}} \\
            &\leq {\int_{\mathbb{R}^d}^{}} 
                (1 + \big|e^{-(t - t') A}v\big|^{s}) f(v)
             \, {\mathrm{d}{v}}\\
            &\leq C_T\|f\|_s,
        \end{align*}
        and
        \begin{align*}
            &\quad\|S_{h_1}(t, t')f - S_{h_2}(t, t') g\|_{s} \\
            &\leq C{\int_{\mathbb{R}^d}^{}} 
                (1 + |v|^{s})
                (
                \frac{f(v) + g(v)}{2}
                b_0 {\int_{t'}^{t}} 
                    |\left\|h_1(\tau)\right\|_{\mathcal{M}} - \left\|h_2(\tau)\right\|_{\mathcal{M}}|
                 \, {\mathrm{d}{\tau}}
                + |f(v) - g(v)|
                )
             \, {\mathrm{d}{v}} \\
            &\leq C(\|f\|_s + \|g\|_s) {\int_{t'}^{t}} \left\|h_1(\tau) - h_2(\tau)\right\|_{\mathcal{M}} \, {\mathrm{d}{\tau}}
            + C\|f - g\|_{s}.
        \end{align*}
        This then completes the proof of Lemma \ref{lem:propertyOfGainPartAndLinearPartSolver}.
    \end{proof}

    With all the above estimates, we now prove the existence of a mild solution by fixed point approach.

    \begin{theorem} \label{thm:existenceOfMildSol}
        If \( f_0 \in \mathcal{M}_+(\mathbb{R}_c^{d}) \), {then} for any constant shearing matrix $A$ and \( {e_\mathrm{res}} \in (0, 1] \), there exists a unique global-in-time mild solution $f$ to \eqref{equ:inelasticBoltzmann} satisfying
        \begin{equation}\label{masscon}
            \left\|f(t)\right\|_{\mathcal{M}} = \left\|f_0\right\|_{\mathcal{M}}.
        \end{equation}
    \end{theorem}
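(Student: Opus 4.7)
The strategy is a standard Banach fixed-point construction for the mild formulation on a short time interval, together with the observation that the local existence time depends only on the total mass, which is preserved by the evolution; this lets me iterate without shrinking the step and extend to $[0, \infty)$.

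Concretely, fix $T > 0$ to be chosen later, set $R := 2\|f_0\|_{\mathcal{M}}$, and consider the closed set
$$\mathcal{B}_T := \bigl\{ f \in \mathit{C}([0, T], \mathcal{M}_+(\mathbb{R}_c^{d})) \,:\, \sup_{t \in [0, T]} \|f(t)\|_{\mathcal{M}} \leq R \bigr\}$$
with the metric $d(f, g) := \sup_{t \in [0, T]} \|f(t) - g(t)\|_{\mathcal{M}}$. Define
$$\Phi[f](t, v) := S_f(t, 0) f_0(v) + \int_0^t S_f(t, \tau) Q_e^+(f, f)(\tau, v) \, \mathrm{d}{\tau}.$$
By \eqref{boundSf} and \eqref{boundQ}, $\|\Phi[f](t)\|_{\mathcal{M}} \leq \|f_0\|_{\mathcal{M}} + b_0 T R^2$, so $\Phi(\mathcal{B}_T) \subset \mathcal{B}_T$ provided $T \leq 1/(4 b_0 R)$. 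Combining \eqref{boundQ-Q} with \eqref{boundSfg} and adding the telescoping terms yields an estimate of the form $d(\Phi[f], \Phi[g]) \leq C_R T \, d(f, g)$ with $C_R$ depending only on $b_0$ and $R$. Shrinking $T$ further makes $\Phi$ a strict contraction, and the unique fixed point $f \in \mathcal{B}_T$ is the desired local mild solution.

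For the mass identity \eqref{masscon}, I integrate the mild formulation in $v$. Since \eqref{boundSf} is actually an equality, $\|S_h(t, t') g\|_{\mathcal{M}} = \exp\!\bigl(-b_0 \int_{t'}^t \|h\|_{\mathcal{M}}\bigr) \|g\|_{\mathcal{M}}$, combined with $\|Q_e^+(f, f)(\tau)\|_{\mathcal{M}} = b_0 \|f(\tau)\|_{\mathcal{M}}^2$ from \eqref{boundQ}, gives, for $m(t) := \|f(t)\|_{\mathcal{M}}$,
$$m(t) = e^{-b_0 \int_0^t m(s) \, \mathrm{d}{s}} m(0) + b_0 \int_0^t e^{-b_0 \int_\tau^t m(s) \, \mathrm{d}{s}} m(\tau)^2 \, \mathrm{d}{\tau}.$$
Differentiating in $t$ produces $m'(t) = -b_0 m(t)^2 + b_0 m(t)^2 = 0$, hence $m(t) \equiv m(0) = \|f_0\|_{\mathcal{M}}$. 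Since the local existence time $T$ depends only on $\|f_0\|_{\mathcal{M}}$, and this quantity is now preserved, I iterate the construction on successive intervals $[kT, (k+1)T]$ without shrinking $T$, yielding the unique global mild solution $f \in \mathit{C}([0, \infty), \mathcal{M}_+)$ satisfying \eqref{masscon}.

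I do not expect a serious obstacle, since all the heavy lifting is already in Lemma \ref{lem:propertyOfGainPartAndLinearPartSolver}; the one point that requires some care is the mass-conservation computation, where the dilation factor $e^{(t-t')\mathrm{tr}(A)}$ in the definition \eqref{DefS} of $S$ must cancel exactly against the Jacobian of the pushforward $v \mapsto e^{(t-t')A} v$ so that the only surviving contribution on the total mass is the exponential damping from the loss term. This cancellation is precisely what makes \eqref{boundSf} an equality rather than a strict inequality, and it is the key input that turns the mild-formulation identity for $m(t)$ into the ODE $m' = 0$.
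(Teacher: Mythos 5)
Your proposal is correct and follows essentially the same route as the paper: the same Picard map on the ball of radius $2\left\|f_0\right\|_{\mathcal{M}}$ in $\mathit{C}([0,T],\mathcal{M}_+)$, the invariance and contraction estimates supplied by Lemma \ref{lem:propertyOfGainPartAndLinearPartSolver}, and global extension via conservation of mass. The only cosmetic difference is the mass identity \eqref{masscon}, which the paper obtains by integrating the equation in $v$ and $t$ (using that $Q_e$ and the drift term carry zero mass), whereas you derive an integral equation for $m(t)=\left\|f(t)\right\|_{\mathcal{M}}$ directly from the mild formulation—using that \eqref{boundSf} is an equality thanks to the cancellation of $e^{(t-t')\mathrm{tr}(A)}$ against the Jacobian—and differentiate to get $m'\equiv 0$; both arguments are valid.
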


    \begin{proof}
        Let
        $$
        X_T \coloneqq \big\{
        f \in \mathit{C}([0, T], \mathcal{M}_+(\mathbb{R}_c^{d}))\,
        \big| \,
        \|f\|_{X_T} \coloneqq \sup\limits_{[0, T]} \left\|f(t)\right\|_{\mathcal{M}} \leq 2 \left\|f_0\right\|_{\mathcal{M}}
        \big\}
        $$
        with $T>0$. Consider the Picard iteration mapping \( P : X_T \to X_T \) by
        \begin{equation*}
            Pf(t, v)
            = S_f(t, 0) f_0(v) + {\int_{0}^{t}} S_f(t, \tau) Q_e^+(f,f)(\tau, v) \, {\mathrm{d}{\tau}}.
        \end{equation*}
        We first show the boundedness of $Pf$ for small time. By \eqref{boundSf} and \eqref{boundQ}, for \( f \in X_T \) and \( t \in [0, T] \), one has
        \begin{align*}
            \left\|Pf(t)\right\|_{\mathcal{M}}
            &\leq \left\|S_f(t, 0) f_0\right\|_{\mathcal{M}} + {\int_{0}^{t}} 
                \left\|S_f(t, \tau) Q_e^+(f, f)(\tau)\right\|_{\mathcal{M}}
             \, {\mathrm{d}{\tau}} \\
            &\leq \left\|f_0\right\|_{\mathcal{M}} + {\int_{0}^{t}} 
                \left\|Q_e^+(f, f)(\tau)\right\|_{\mathcal{M}}
             \, {\mathrm{d}{\tau}} \\
            &= \left\|f_0\right\|_{\mathcal{M}} + b_0 {\int_{0}^{t}} 
                \left\|f(\tau)\right\|_{\mathcal{M}}^2
             \, {\mathrm{d}{\tau}},
        \end{align*}
        which further by the definition of $X_T$ with norm $\|\cdot\|_{X_T}$ gives
        $$
        \left\|Pf(t)\right\|_{\mathcal{M}}\leq \left\|f_0\right\|_{\mathcal{M}} + b_0 T \|f\|_{X_T}^2\leq \left\|f_0\right\|_{\mathcal{M}} + 4 b_0 T \left\|f_0\right\|_{\mathcal{M}}^2.
        $$
        Thus, for
        \begin{align}\label{Tbound}
            T \leq \frac{4 b_0}{\left\|f_0\right\|_{\mathcal{M}} + 1} ,
        \end{align}
        it holds that
        \begin{align}\label{boundPf} 
        \|Pf\|_{X_T} \leq 2 \left\|f_0\right\|_{\mathcal{M}}.
        \end{align}

        Furthermore, since \( b \geq 0 \) and \( f \geq 0 \), \( Pf(t) \geq 0 \) for all \( t \in [0, T] \), which implies the positivity of $Pf$.

        Then we prove that $P$ is a contraction mapping on $X_T$ for small $T$. Let \( f, g \in X_T \), for \( 0 \leq t \leq T \), we take the difference to get
        \begin{align*}
            \left\|Pf(t) - Pg(t)\right\|_{\mathcal{M}}
            \leq& \left\|S_f(t, 0)f_0 - S_g(t, 0)f_0\right\|_{\mathcal{M}}
            + {\int_{0}^{t}} 
                \left\|S_f(t, \tau) Q_e^+(f,f)(\tau) - S_g(t, \tau) Q_e^+(g,g)(\tau)\right\|_{\mathcal{M}}
             \, {\mathrm{d}{\tau}}.
        \end{align*}
        It follows from \eqref{boundSfg}, \eqref{boundQ} and \eqref{boundQ-Q} that
        \begin{align*}
            &\left\|Pf(t) - Pg(t)\right\|_{\mathcal{M}} \\
            \leq &b_0 \left\|f_0\right\|_{\mathcal{M}} {\int_{0}^{t}} \left\|f(\tau) - g(\tau)\right\|_{\mathcal{M}} \, {\mathrm{d}{\tau}} \\
            &+{\int_{0}^{t}} \big(
                \left\|Q_e^+(f, f)(\tau) - Q_e^+(g, g)(\tau)\right\|_{\mathcal{M}}
                + \frac{b_0}{2} (\left\|Q_e^+(f, f)(\tau)\right\|_{\mathcal{M}} + \left\|Q_e^+(g,g)(\tau)\right\|_{\mathcal{M}}) {\int_{\tau}^{t}} \left\|f(\tau') - g(\tau')\right\|_{\mathcal{M}} \! \, {\mathrm{d}{\tau'}}
                \big)\! \, {\mathrm{d}{\tau}} \\
            \leq& b_0 T \left\|f_0\right\|_{\mathcal{M}} \|f - g\|_{X_T} \\
            &+ \|f - g\|_{X_T} {\int_{0}^{t}} \big(
                b_0
                (\left\|f(\tau)\right\|_{\mathcal{M}} + \left\|g(\tau)\right\|_{\mathcal{M}})
                + \frac{b_0^3}{2}
                (\left\|f(\tau)\right\|_{\mathcal{M}}^2 + \left\|g(\tau)\right\|_{\mathcal{M}}^2)
                T
                \big) \, {\mathrm{d}{\tau}}.
        \end{align*}
        By letting $T$ satisfy \eqref{Tbound}, we have from \eqref{boundPf} that
        \begin{align*}
            \left\|Pf(t) - Pg(t)\right\|_{\mathcal{M}}
            \leq& \|f - g\|_{X_T}
            b_0 T \left\|f_0\right\|_{\mathcal{M}}
            (
            5 + 4 b_0^2 T \left\|f_0\right\|_{\mathcal{M}}
            ).
        \end{align*}
        We further let
        \begin{equation*}
            {T = \min\Big\{\frac{1}{1 + 10 b_0\, \left\|f_0\right\|_{\mathcal{M}} + 8 b_0^3 \left\|f_0\right\|_{\mathcal{M}}^2},\frac{4 b_0}{\left\|f_0\right\|_{\mathcal{M}} + 1}\Big\},}
        \end{equation*}
        to get $\|Pf - Pg\|_{X_T} \leq \frac{1}{2}\|f - g\|_{X_T},$
        which implies \( P \) is a contraction on \( X_T \). By Banach contraction theorem, there exists a unique \( f \in X_T \) such that \( f = Pf \).

        {In order to get a global solution, we integrate in $v$ and $t$ on both sides of \eqref{equ:inelasticBoltzmann} to get
        \begin{equation}\label{massconserve}
            \left\|f(t)\right\|_{\mathcal{M}}
            =  \left\|f_0\right\|_{\mathcal{M}}.
        \end{equation}}
        Hence, the mass is conserved with \eqref{masscon}, so we can extend the local-in-time solution to be a unique global-in-time mild solution, which finishes the proof of Theorem \ref{thm:existenceOfMildSol}.
    \end{proof}

    Concerning the moments of a mild solution, we have the following result.

    \begin{theorem} \label{thm:mildSolAsWeakSolAndProperties}
        Suppose \( f \) is a mild solution of \eqref{equ:inelasticBoltzmann} {with initial condition} \( f_0(v) \in \mathcal{M}_+ \) with \( {\int_{\mathbb{R}^d}^{}} |v|^{s_0} f_0 \, {\mathrm{d}{v}} < \infty \) for some \( s_0 > 1 \), then  for any \( 0 < T < \infty \), it holds that
        \begin{align}\label{boundf}
            \sup_{0\leq t\leq T}\|f(t)\|_{s_0} \leq C_T\|f_0\|_{s_0}<\infty, \end{align} with some constant $C_T$ which depends on \( T \), and the weak form \eqref{equ:inelasticBoltzmann_weakForm} is satisfied on test function
        \( \psi(t,v) \in \mathit{C}^1([0, T], \mathit{C}^{1}(\mathbb{R}^d)) \)
        with
        \( |\psi(t,v)| + |v| |{ \nabla_{v} }\psi(t,v)| \leq C( 1 + |v|^{s'}) \)
        uniformly on \( t \in [0, T] \) for some \( 1 < s' < s_0 \).
        Furthermore, on each \( s > s_0 \) and each \( 0 < T < \infty \) there exists \( f_n \) with \( \sup_{[0, T]} \int |v|^s f_n(t) < \infty \) and \( f \) can be approximated by \( f_n(t,v) \in \mathit{C}^{0}([0,T], \mathcal{M}_+) \) in the sense that \( \sup_{[0, T]} \|f_n(t) - f(t)\|_{s_0} \to 0 \) as $n\to \infty$.
    \end{theorem}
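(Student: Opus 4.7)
The plan is to establish the three assertions of Theorem \ref{thm:mildSolAsWeakSolAndProperties} in sequence by leveraging the estimates in Lemma \ref{lem:propertyOfGainPartAndLinearPartSolver} together with Gronwall's inequality and a truncation argument on the initial data.

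For the moment bound \eqref{boundf}, I will apply the $s_0$-moment norm directly to the mild formulation
\begin{equation*}
    f(t, v) = S_f(t, 0) f_0(v) + \int_0^t S_f(t, \tau) Q_e^+(f,f)(\tau, v)\, d\tau.
\end{equation*}
The linear solver estimate \eqref{boundSfs} yields $\|S_f(t,0) f_0\|_{s_0} \leq C_T \|f_0\|_{s_0}$, while \eqref{boundQs} combined with mass conservation \eqref{masscon} gives $\|Q_e^+(f,f)(\tau)\|_{s_0} \leq C \|f_0\|_{\mathcal{M}} \|f(\tau)\|_{s_0}$. Together these produce $\|f(t)\|_{s_0} \leq C_T \|f_0\|_{s_0} + C_T' \int_0^t \|f(\tau)\|_{s_0}\, d\tau$, to which Gronwall's inequality applies directly.

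For the extended weak form, the mild formulation already implies \eqref{equ:inelasticBoltzmann_weakForm} on bounded $C^1$ test functions. To accommodate $\psi$ of growth $|v|^{s'}$ with $1 < s' < s_0$, I introduce a smooth radial cutoff $\chi_R$ with $\chi_R \equiv 1$ on $|v|\leq R$, $\chi_R \equiv 0$ on $|v|\geq 2R$, and $|v||\nabla_v \chi_R| \leq C$ uniformly in $R$. Substituting $\psi \chi_R$ into \eqref{equ:inelasticBoltzmann_weakForm} and sending $R\to\infty$, every term passes to the limit by dominated convergence: the drift and time-derivative integrands are pointwise bounded by $C(1+|v|^{s'}) f$, integrable thanks to the moment bound just established since $s'<s_0$, while the quadruple collision integrand is controlled pointwise by $C b(\cos\theta)(1+|v|^{s'}+|v_*|^{s'}) f(t,dv) f(t,dv_*)$, likewise integrable. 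The extra boundary piece $\psi(Av\cdot\nabla_v\chi_R)f$ is supported on $R\leq|v|\leq 2R$ with integrand of size $C(1+|v|^{s'})f$, so it vanishes in the limit.

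For the approximation, I truncate the initial data by $f_{0,n}(dv) \coloneqq \mathbbm{1}_{\{|v|\leq n\}} f_0(dv)$, which has finite $s$-moment for every $s\geq s_0$, and let $f_n$ be the associated mild solution from Theorem \ref{thm:existenceOfMildSol}. Running the first-step argument with $s$ replacing $s_0$ yields $\sup_{[0,T]}\|f_n(t)\|_s < \infty$ (depending on $n$), giving the required higher-moment property. To obtain $\sup_{[0,T]}\|f_n(t)-f(t)\|_{s_0}\to 0$, I subtract the two mild equations, apply \eqref{boundSfgs} and \eqref{boundQ-Qs}, and exploit that $\|f(t)\|_{s_0}$ and $\|f_n(t)\|_{s_0}$ are both bounded on $[0,T]$ uniformly in $n$, since $\|f_{0,n}\|_{s_0}\leq \|f_0\|_{s_0}$ and mass conservation \eqref{masscon} is $n$-independent, so the same Gronwall constants from the first step serve the whole sequence. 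Using $\|\cdot\|_{\mathcal{M}}\leq\|\cdot\|_{s_0}$ to absorb the $\|f_n-f\|_{\mathcal{M}}$-type factor, the estimate closes on
\begin{equation*}
    \|f_n(t)-f(t)\|_{s_0} \leq C_T \|f_{0,n}-f_0\|_{s_0} + C_T \int_0^t \|f_n(\tau)-f(\tau)\|_{s_0}\, d\tau,
\end{equation*}
and Gronwall reduces the claim to $\|f_{0,n}-f_0\|_{s_0}=\int_{|v|>n}(1+|v|^{s_0}) f_0(dv)\to 0$, which is immediate from $\|f_0\|_{s_0}<\infty$ by dominated convergence. The main obstacle I anticipate is precisely this uniform-in-$n$ control of $\|f_n(t)\|_{s_0}$ on $[0,T]$ in the approximation step; once the Gronwall constants are verified to depend only on $\|f_0\|_{s_0}$, $T$, $A$, and $b_0$ but not on $n$, the rest is routine.
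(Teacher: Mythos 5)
Your proposal is correct, and two of its three steps coincide with the paper's proof: the moment bound \eqref{boundf} via \eqref{boundSfs}, \eqref{boundQs}, mass conservation and Gronwall, and the approximation step via the truncated data \( f_0\chi_{|v|\leq n} \), the estimates \eqref{boundSfgs}, \eqref{boundQ-Qs} and Gronwall, are exactly the paper's arguments. Where you diverge is the extension of the weak form to test functions of growth \( |v|^{s'} \): the paper keeps the test function intact but approximates it by a sequence \( \psi_n\in C([0,T],C^1(\mathbb{R}_c^{d})) \) with \( |\nabla_v\psi_n|\leq C(1+|\nabla_v\psi|) \), and then controls the collision bracket through a Taylor (mean-value) expansion, which produces an \( n \)-uniform dominating function of order \( (|v|+|v_*|)^{s'} \) expressed purely through the gradient bound (this is needed there because the \( \psi_n \) are bounded, but not uniformly in \( n \)); you instead multiply \( \psi \) by a radial cutoff \( \chi_R \), dominate the collision bracket crudely by \( C(1+|v|^{s'}+|v_*|^{s'}) \), and dispose of the extra commutator term \( \psi\,Av\cdot\nabla_v\chi_R \) supported on \( R\leq|v|\leq 2R \), which indeed vanishes since \( \int_{|v|\geq R}(1+|v|^{s'})f(t,\mathrm{d}v)\leq CR^{s'-s_0}\sup_{[0,T]}\|f(t)\|_{s_0} \). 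Both routes hinge on \( s'<s_0 \) together with the uniform \( s_0 \)-moment bound to furnish an integrable majorant for dominated convergence; yours is slightly more elementary in that no cancellation structure is needed, while the paper's gradient-based Taylor bound is what lets it work directly with approximations of \( \psi \) in the admissible class \( C([0,T],C^1(\mathbb{R}_c^{d})) \), consistent with the hypothesis being phrased through \( |v||\nabla_v\psi| \). Your cutoff \( \psi\chi_R \) is an admissible test function (compactly supported and \( C^1 \)), so the starting point you invoke is legitimate, and the argument closes.
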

    
    \begin{proof}
        By \eqref{boundSfs}, \eqref{boundQs} and \eqref{massconserve}, we have
        \begin{align*}
            \|f(t)\|_{s_0}
            &\leq \|S_f(t, 0)f_0\|_{s_0}
            + {\int_{0}^{t}} 
                \|S_f(t, \tau) Q_e^+(f,f)(\tau)\|_{s_0}
             \, {\mathrm{d}{\tau}} \\
            &\leq C_T( \|f_0\|_{s_0} + {\int_{0}^{t}} 
                \left\|f(\tau)\right\|_{\mathcal{M}} \|f(\tau)\|_{s_0}
             \, {\mathrm{d}{\tau}}) \\
            &= C_T(\|f_0\|_{s_0} + \left\|f_0\right\|_{\mathcal{M}} {\int_{0}^{t}} 
                \|f(\tau)\|_{s_0}
             \, {\mathrm{d}{\tau}}),
        \end{align*}
        which implies \eqref{boundf}.

        Let \( \psi(t,v) \in \mathit{C}^1([0, T], \mathit{C}^{1}(\mathbb{R}^d)) \) be {a test function} with $|\psi(t,v)| + |v| |{ \nabla_{v} }\psi(t,v)| \leq C( 1 + |v|^{s'})$. Then let \( \psi(t,v) \) be approximated by the sequence \( \{\psi_n(t,v)\}^{\infty}_{n=1} \subset \mathit{C}^1([0, T], \mathit{C}^{1}(\mathbb{R}_c^{d})) \) with \( |{ \nabla_{v} }\psi_n| \leq C( 1 +|v|^{s'-1}+ |{ \nabla_{v} } \psi|) \), then one gets
        \begin{equation} \label{weakapprox}
            \begin{aligned}[b]
                &\quad\;\,
                {\int_{\mathbb{R}^d}^{}} \psi_n(T,v) f(T,v) \, {\mathrm{d}{v}} - {\int_{\mathbb{R}^d}^{}} \psi_n(0,v) f_0(v) \, {\mathrm{d}{v}} \\
                &\qquad\qquad\qquad+ {\int_{0}^{T}} \int_{\mathbb{R}^d} f(t,v) {\partial_{t}}\psi_n(t,v) \mathrm{d}{v} \, {\mathrm{d}{t}}
                + {\int_{0}^{T}} \int_{\mathbb{R}^d} A v f(t,v) \cdot { \nabla_{v} }\psi_n(t,v) \mathrm{d}{v} \, {\mathrm{d}{t}} \\
                &= \frac{1}{2} {\int_{0}^{T}} {\int_{\mathbb{R}^d}^{}} {\int_{\mathbb{R}^d}^{}} {\int_{\mathbb{S}^{d-1}}^{}} 
                                b(\cos\theta) f(t,v) f(t,v_*) (\psi_n(t,v') + \psi_{n}(t,v'_*) - \psi_n(t,v) - \psi_{n}(t,v_*))
                 \, {\mathrm{d}{\sigma}}{\mathrm{d}{v_*}}{\mathrm{d}{v}}{\mathrm{d}{t}}.
            \end{aligned}
        \end{equation}
        Then an application of Taylor expansion gives
        \begin{equation}\label{Taylor}
            \begin{aligned}[b]
                &\quad
                \Big|{\int_{\mathbb{S}^{d-1}}^{}} 
                    b(\cos\theta) (\psi_n(t,v') + \psi_n(t,v'_*) - \psi_n(t,v) - \psi_n(t,v_*))
                 \, {\mathrm{d}{\sigma}}\Big| \\
                &\leq C\big|(v' - v) \cdot {\int_{0}^{1}} 
                    ({ \nabla_{v} } \psi_n(t,v + \xi(v' - v)) - { \nabla_{v} } \psi_n(t,v_* + \xi(v_*' - v_*)))
                 \, {\mathrm{d}{\xi}}\big|.
            \end{aligned}
        \end{equation}
        A direct calculation shows that {\begin{align*} |v' - v| &= |v_*' - v_*| = \big|\frac{z}{2}(v-v_*)-\frac{z}{2}|v-v_*|\sigma\big|=\big|\frac{z}{2}|v-v_*|(\frac{v-v_*}{|v-v_*|}-\sigma)\big|\leq z|v-v_*|,
        \end{align*}}
        which yields
        \begin{equation} \label{boundgra}
            \begin{aligned}[b]
                &\quad
                C\big|(v' - v) \cdot {\int_{0}^{1}} 
                    ({ \nabla_{v} } \psi_n(t,v + \lambda(v' - v)) - { \nabla_{v} } \psi_n(t,v_* + \lambda(v_*' - v_*)))
                 \, {\mathrm{d}{\lambda}}\big| \\
                &\leq C\sup_{|u - v| \leq z |v - v_*|} |v - v_*| |{ \nabla_{v} } \psi_n(t,u)|
                + \sup_{|u - v_*| \leq z |v - v_*|} |v - v_*| |{ \nabla_{v} } \psi_n(t,u)| \\
                &\leq C(|v| + |v_*|) \sup_{|u| \leq (1 + z) (|v| + |v_*|)} |{ \nabla_{v} } \psi_n(t,u)| \\
                &\leq C(|v| + |v_*|) \big(1 + (|v| + |v_*|)^{s'-1}+\sup_{R \leq |u| \leq (1 + z) (|v| + |v_*|)} |{ \nabla_{v} } \psi(t,u)|\big) \\
                &\leq C( 1 + (|v| + |v_*|)^{s'}),
            \end{aligned}
        \end{equation}
        where the constant $C$ is independent of \( n \). The third inequality above holds since \( \psi(t,v) \in \mathit{C}([0, T], \mathit{C}^{1}(\mathbb{R}^d))  \), which gives \( |{ \nabla_{v} }{\psi(t,v)}| \leq M \) on a neighborhood \( B(0, R) \) of the origin for some constant \( M = M(\psi) > 0 \).
        Collecting \eqref{weakapprox}, \eqref{Taylor} and \eqref{boundgra}, then {passing to limit \( n \to \infty \) on \( \psi_n \) in \eqref{weakapprox},} by dominated convergence theorem, one obtains \eqref{equ:inelasticBoltzmann_weakForm} {for \( \psi(t,v) \in \mathit{C}^1([0, T], \mathit{C}^{1}(\mathbb{R}^d)) \)  with $|\psi(t,v)| + |v| |{ \nabla_{v} }\psi(t,v)| \leq C( 1 + |v|^{s'})$. }

        Consider the higher moment approximation of $f$ now.  {Let $\chi$ denote the indicator functin such that $\chi_E(x)=1$ for $x\in E$.}  Defining the initial data \( f_{n0} = f_0 {\chi_{|v| \leq n}} \) and the corresponding mild solutions \( f_n \), we have
        \begin{align*}
            &\|f_{n0} - f_0\|_{s_0} = \int_{\mathbb{R}^d} (1 + |v|^{s_0}) f_0(v) {\chi_{|v| > n}} \mathrm{d}{v}
            \to 0,
        \end{align*}
        as $n\to\infty$.
        From \eqref{boundf}, it holds that
        \begin{align*}
            &\sup_{t\in[0, T]} \|f_n(t)\|_{s} \leq C_T \|f_{n0}\|_{s} < \infty, \\
            &\sup_{t\in[0, T]} \|f_n(t)\|_{s_0} \leq C_T \|f_{n0}\|_{s_0} \leq C_T\|f_0\|_{s_0} < \infty,
        \end{align*}
        which, together with \eqref{boundSfs}, \eqref{boundSfgs} and \eqref{boundQ-Qs}, yield
        \begin{align*}
            \|f_n(t) - f(t)\|_{s_0}
            &= \|S_{f_n}(t, 0)f_{n0} - S_f(t, 0)f_0\|_{s_0}
            + {\int_{0}^{t}} 
                \|S_{f_n}(t, \tau)Q_e^+(f_n,f_n)(\tau) - S_f(t, \tau)Q_e^+(f,f)(\tau)\|_{s_0}
             \, {\mathrm{d}{\tau}}\\
            &\leq C\Big(\|f_{n0} - f_0\|_{s_0} + \|f_0\|_{s_0} {\int_{0}^{t}} \left\|f_n(\tau) - f(\tau)\right\|_{\mathcal{M}} \, {\mathrm{d}{\tau}} \\
            &\quad\quad\;\,
            + {\int_{0}^{t}} 
                (\|f_n(\tau)\|_{s_0} + \|f(\tau)\|_{s_0}) \|f_n(\tau) - f(\tau)\|_{s_0}
             \, {\mathrm{d}{\tau}} \\
            &\quad\quad\;\,
            + {\int_{0}^{t}} 
                (
                \|f_n(\tau)\|_{s_0}^2
                + \|f(\tau)\|_{s_0}^2
                )
                {\int_{\tau}^{t}} \left\|f_n(\tau') - f(\tau')\right\|_{\mathcal{M}} \, {\mathrm{d}{\tau'}}
             \, {\mathrm{d}{\tau}}\Big) \\
            &\leq C\|f_{n0} - f_0\|_{s_0}
            + C_T\|f_0\|_{s_0} (\|f_0\|_{s_0} + T) {\int_{0}^{t}} \|f_n(\tau) - f(\tau)\|_{s_0} \, {\mathrm{d}{\tau}}.
        \end{align*}
        It then follows from Gronwall inequality that \( \sup_{t\in[0, T]} \|f_n(t) - f(t)\|_{s_0} \leq C_T\|f_{n0} - f_0\|_{s_0} \to 0 \) as $n\to \infty$.
    \end{proof}
{Theorem \ref{thm:existenceOfMildSol} and Theorem \ref{thm:mildSolAsWeakSolAndProperties} will be very useful later in the proof of Theorem \ref{thm:existenceOfStatProfileWithSmallness} for the existence of stationary self-similar profile $G$.}

    \subsection{Second moment equation} \label{sec:secondMomentEqu}

    We now compute the evolution equation for the second moment tensor \(\mathcal{B} =\mathcal{B}(t):= {\int_{\mathbb{R}^d}^{}} {{v}^{\otimes 2}} f \, {\mathrm{d}{v}} \) on a {mild} solution \( f \) of \eqref{equ:inelasticBoltzmannSelfSim} that has unit mass and zero momentum \( {\int_{\mathbb{R}^d}^{}} (1, v) f(t,v) \, {\mathrm{d}{v}} = (1, 0) \). Note that by definition, \( \mathcal{B} \) is symmetric positive semidefinite.

    By multiplying \( {{v}^{\otimes 2}} \) on both sides of \eqref{equ:inelasticBoltzmannSelfSim} and integrating over \( v \), we have
    \begin{align*}
        \frac{\mathrm{d}}{{\mathrm{d} t}} \mathcal{B}
        &=
        \frac{\mathrm{d}}{{\mathrm{d} t}} {\int_{\mathbb{R}^d}^{}} {{v}^{\otimes 2}} f \, {\mathrm{d}{v}} \\
        &= {\int_{\mathbb{R}^d}^{}} 
            {{v }^{\otimes 2}}{ \nabla_{v} } \cdot \!(A_\beta v f)
         \, {\mathrm{d}{v}}
        + {\int_{\mathbb{R}^d}^{}} {{v}^{\otimes 2}} Q_e(f, f) \, {\mathrm{d}{v}} \\
        &= -{\int_{\mathbb{R}^d}^{}} 
            A_\beta v f \cdot { \nabla_{v} }({{v}^{\otimes 2}})
         \, {\mathrm{d}{v}}
        + \frac{1}{2} {\int_{\mathbb{R}^d}^{}} 
            {\int_{\mathbb{R}^d}^{}} 
                {\int_{\mathbb{S}^{d-1}}^{}} 
                    b(\sigma \cdot \hat{e})
                    f(t,v) f(t,v_*)
                    \big({{v'}^{\otimes 2}} + {{v_*'}^{\otimes 2}} - {{v}^{\otimes 2}} - {{v_*}^{\otimes 2}}\big)
                 \, {\mathrm{d}{\sigma}}
            {\mathrm{d}{v_*}}
        {\mathrm{d}{v}},
    \end{align*}
    where we denote \( \hat{e} = \frac{v - v_*}{|v - v_*|} \). {Recalling \eqref{b0}, \eqref{b1} and \eqref{bsigma2}, a direct calculation gives}
    \begin{align*}
        \frac{\mathrm{d}}{{\mathrm{d} t}} \mathcal{B}&= -{\int_{\mathbb{R}^d}^{}} 
            (v \otimes (A_\beta v) + (A_\beta v) \otimes v) f(t,v)
         \, {\mathrm{d}{v}} \\
        &\quad
        - \frac{1}{4} {\int_{\mathbb{R}^d}^{}} {\int_{\mathbb{R}^d}^{}} {\int_{\mathbb{S}^{d-1}}^{}} 
                    b(\sigma \cdot \hat{e}) f(t,v) f(t,v_*) |v - v_*|^2 \\
                    &\qquad\qquad\qquad\times
                    (
                    z (2 - z) {\hat{e}}^{\otimes 2} - z^2 {{\sigma}^{\otimes 2}} - z (1 - z) (\hat{e} \otimes \sigma + \sigma \otimes \hat{e})
                    )
         \, {\mathrm{d}{\sigma}}{\mathrm{d}{v_*}}{\mathrm{d}{v}} \\
        &= -\Big(({\int_{\mathbb{R}^d}^{}} {{v}^{\otimes 2}}f(t,v) \, {\mathrm{d}{v}}) A_\beta^\mathsf{T} + A_\beta {\int_{\mathbb{R}^d}^{}} {{v}^{\otimes 2}}f(t,v) \, {\mathrm{d}{v}}\Big) \\
        &\quad
        -\frac{1}{4} {\int_{\mathbb{R}^d}^{}} {\int_{\mathbb{R}^d}^{}} 
                f(t,v) f(t,v_*)|v - v_*|^2
                (
                (2 \zeta + z^2 d c_{11}) {\hat{e}}^{\otimes 2} - z^2 c_{11} I
                )
         \, {\mathrm{d}{v_*}}{\mathrm{d}{v}},
    \end{align*}
    {where $\zeta$ and $c_{11}$ is defined in  \eqref{Defzeta} and \eqref{Defc11} respectively.} Then
    \begin{align*}
        \frac{\mathrm{d}}{{\mathrm{d} t}} \mathcal{B}&= -(\mathcal{B} A_\beta^\mathsf{T} + A_\beta \mathcal{B}) - \frac{1}{4} (
        (2 \zeta + z^2 d c_{11}) \cdot 2 \mathcal{B} - z^2 c_{11} \cdot 2 \mathrm{tr}(\mathcal{B}) I
        ) \\
        &= -\big(
        A_\beta \mathcal{B} + \mathcal{B} A_\beta^\mathsf{T}
        + \zeta \mathcal{B} + z^2 \frac{d c_{11}}{2} (\mathcal{B} - \frac{\mathrm{tr}(\mathcal{B})}{d} I)
        \big),
    \end{align*}
    which yields
    \begin{equation} \label{equ:secondMomentEquWithSelfSim}
        \begin{aligned}
            0
            &=
            \frac{\mathrm{d}}{{\mathrm{d} t}} \mathcal{B} + A_\beta \mathcal{B} + \mathcal{B} A_\beta^\mathsf{T} + \zeta \mathcal{B} + z^2 \frac{d c_{11}}{2} (\mathcal{B} - \frac{\mathrm{tr}(\mathcal{B})}{d} I) \\
            &=
            \frac{\mathrm{d}}{{\mathrm{d} t}} \mathcal{B} + A \mathcal{B} + \mathcal{B} A^\mathsf{T} + 2 (\beta + \frac{\zeta}{2}) \mathcal{B} + z^2 \frac{d c_{11}}{2} (\mathcal{B} - \frac{\mathrm{tr}(\mathcal{B})}{d} I) \\
            &= \frac{\mathrm{d}}{{\mathrm{d} t}} \mathcal{B} + (2 \beta + \zeta + \tilde{c}) \mathcal{B} + L \mathcal{B},
        \end{aligned}
    \end{equation}
    where $A_\beta$ is defined in \eqref{DefAbeta}, \( L\mathcal{B} = A \mathcal{B} + \mathcal{B} A^\mathsf{T} - \frac{\tilde{c}}{d} \mathrm{tr}(\mathcal{B}) I \) and \( \tilde{c} = z^2 \frac{d c_{11}}{2} > 0 \). Notice that our equation contains the classical elastic case where \( {e_\mathrm{res}} = 1 \), which leads to \( z = 1 \) and \( \zeta = 0 \), see \cite{JamesEtAl17SSP,BobylevEtAl20SSA}.

In terms of \eqref{equ:secondMomentEquWithSelfSim}, we denote the steady second moment equation {for symmetric B} by
    \begin{equation} \label{stableeq}
        (2 \tilde{\beta} + \tilde{c}) B + L B
        = (2 \beta + \zeta + \tilde{c}) B + L B
        = 0,
    \end{equation}
    where \( \tilde{\beta} = \beta + \zeta / 2 \). Here and in the sequel, for the steady case we have used the constant matrix $B$ instead of the time-dependent matrix $\mathcal{B}$.

    \subsection{Second moment for uniform shear flow} \label{sec:discussionOnUSF}
    We now consider the second moment equation \eqref{equ:secondMomentEquWithSelfSim} in the case where \( A = \alpha E_{12} \) {, defined in \eqref{DefUSF},} is the uniform shear flow matrix, look for a symmetric stationary solution and study the eigensystem of \( -L \). In such case, we can have a precise description on whether the temperature is increasing or decreasing in terms of the comparison of effects between shear heating and inelastic cooling.

    \begin{theorem}  \label{thm.ss.comsign}
        On a given non-degenerate cutoff collision kernel {satisfying \eqref{Defnondegenerate},} and inelastic parameter \( z \in (1 / 2, 1) \), for the inelastic Boltzmann equation under uniform simple shear heating
        \begin{equation*}
            {\partial_{t}} f - \alpha v_2 {\partial_{v_1}}f = Q_e(f, f),
        \end{equation*}
        there exists a critical shearing parameter \( \alpha_0 = \alpha_0(z) > 0 \) defined by \eqref{equ:balanceUSFParaWithoutSelfSim} in the proof such that for the {mild} solution \( f(t,v) \) with temperature \( T(t) = \int_{\mathbb{R}^d} |v|^2 f(t,v) \mathrm{d}{v} \) on an initial probability measure with zero momentum and finite energy, the following holds:
        \begin{itemize}
            \item
            If \( \alpha > \alpha_0 \), \( T(t) \) grows exponentially with rate \( 2 \gamma - \zeta > 0 \);

            \item
            If \( 0 \leq \alpha < \alpha_0 \), \( T(t) \) decays to zero exponentially with rate \( \zeta - 2 \gamma > 0 \);

            \item
            If \( \alpha = \alpha_0 \), \( T(t) \) converges exponentially to some finite positive value with rate \( \zeta - 2 \sigma \geq \tilde{c} > 0 \).

        \end{itemize}
        Here $\zeta$ is defined in \eqref{Defzeta}, and \( \gamma = \gamma(\alpha) \), \( \sigma = \sigma(\alpha) \) are defined in \eqref{equ:usfParaRelationRealSol} and \eqref{equ:usfParaRelationComplexSol} in the proof, respectively.
    \end{theorem}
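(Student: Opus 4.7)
The plan is to reduce the question to the spectral analysis of a closed $3\times 3$ linear ODE extracted from the second-moment equation. Since the initial measure has finite energy, Theorem \ref{thm:mildSolAsWeakSolAndProperties} (applied with $s_0 = 2$) ensures that the second-moment tensor $\mathcal{B}(t)$ of the mild solution is well-defined and obeys \eqref{equ:secondMomentEquWithSelfSim} with $\beta = 0$. Specializing to $A = \alpha E_{12}$ and reading off components shows that the triple $X = (T, \mathcal{B}_{12}, \mathcal{B}_{22})^{\mathsf{T}}$ forms a closed linear system, independent of the remaining entries:
\[
    \dot T = -\zeta T - 2\alpha \mathcal{B}_{12}, \quad \dot{\mathcal{B}}_{12} = -(\zeta+\tilde c)\mathcal{B}_{12} - \alpha \mathcal{B}_{22}, \quad \dot{\mathcal{B}}_{22} = -(\zeta+\tilde c)\mathcal{B}_{22} + \frac{\tilde c}{d} T,
\]
which I write as $\dot X = -M X$, so that the asymptotics of $T(t)$ are governed by the spectrum of $M$.

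A direct expansion gives $\det(M - \lambda I) = (\zeta - \lambda)(\zeta + \tilde c - \lambda)^2 - \frac{2\alpha^2 \tilde c}{d}$; the substitution $\lambda = \zeta - 2\mu$ reduces the eigenvalue problem to the cubic
\[
    \mu(2\mu + \tilde c)^2 = \frac{\alpha^2 \tilde c}{d},
\]
from which I define $\gamma(\alpha)$ as its unique real root (equation \eqref{equ:usfParaRelationRealSol}) and $\sigma(\alpha)$ as the common real part of its two complex roots (equation \eqref{equ:usfParaRelationComplexSol}). Vieta applied to the equivalent polynomial $4\mu^3 + 4\tilde c\mu^2 + \tilde c^2 \mu - \alpha^2 \tilde c/d = 0$ yields $\gamma + 2\sigma = -\tilde c$, so $(\zeta - 2\sigma) - (\zeta - 2\gamma) = 3\gamma + \tilde c > 0$ for every $\alpha > 0$; hence $\lambda_1 := \zeta - 2\gamma$ is genuinely the eigenvalue of smallest real part. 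Monotonicity of $\mu \mapsto \mu(2\mu+\tilde c)^2$ on $\mu \geq 0$ then shows that $\gamma(\alpha)$ is strictly increasing, so the equation $\lambda_1 = 0$ (equivalently $\gamma = \zeta/2$) has the unique positive solution given by \eqref{equ:balanceUSFParaWithoutSelfSim}, namely $\alpha_0^2 = d\zeta(\zeta + \tilde c)^2/(2\tilde c)$. At $\alpha = \alpha_0$ Vieta further gives $\sigma = -(2\tilde c + \zeta)/4$, so the secondary rate is $\zeta - 2\sigma = \frac{3\zeta}{2} + \tilde c \geq \tilde c$.

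With the spectrum in hand, decomposing $X(t) = c_1 e^{-\lambda_1 t} v_1 + (\text{strictly faster modes})$ immediately gives the three cases from the sign of $\lambda_1$: growth at rate $2\gamma - \zeta$ for $\alpha > \alpha_0$, decay to zero at rate $\zeta - 2\gamma$ for $0 \leq \alpha < \alpha_0$, and convergence of $T$ to the constant $c_1$ at rate $\zeta - 2\sigma$ for $\alpha = \alpha_0$.

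The main obstacle is excluding the degenerate possibility $c_1 = 0$, which would wipe out the dominant mode and ruin the claimed asymptotics. This is resolved by an explicit computation of the left and right eigenvectors of $M$ associated with $\lambda_1$. The left eigenvector is (up to scaling) $\ell = (1, -c, c^2/2)$ with $c = 2\alpha/(2\gamma + \tilde c)$, and
\[
    \ell \cdot X(0) = T(0) - c\mathcal{B}_{12}(0) + \tfrac{c^2}{2}\mathcal{B}_{22}(0) \geq T(0) - \tfrac{1}{2}\mathcal{B}_{11}(0) > 0,
\]
obtained via completion of squares using $\mathcal{B}_{12}(0)^2 \leq \mathcal{B}_{11}(0)\mathcal{B}_{22}(0)$ (positive semidefiniteness of $\mathcal{B}(0)$) together with $T(0) \geq \mathcal{B}_{11}(0) + \mathcal{B}_{22}(0)$. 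A parallel computation shows that the matching right eigenvector has strictly positive $T$-coordinate, so $c_1 = \ell \cdot X(0)/(\ell \cdot v_1) > 0$; in particular, in the critical case the limit $T_\infty = c_1$ is strictly positive, completing the trichotomy with the stated exponential rates.
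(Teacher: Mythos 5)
Your proposal is correct and, at its core, performs the same computation as the paper: the cubic \( \mu(2\mu+\tilde c)^2=\tilde c\alpha^2/d \) you extract from \( \det(M-\lambda I) \) is exactly \eqref{equ:usfParaRelation} with \( \beta=0 \), your \( \gamma,\sigma \) are those of \eqref{equ:usfParaRelationRealSol}--\eqref{equ:usfParaRelationComplexSol}, and \( \alpha_0^2=d\zeta(\zeta+\tilde c)^2/(2\tilde c) \) is \eqref{equ:balanceUSFParaWithoutSelfSim}. The packaging differs: the paper treats the full matrix ODE \eqref{equ:secondMomentEquWithSelfSim} on symmetric matrices, identifies the whole spectrum (including the eigenvalue \( 2\beta+\zeta+\tilde c \) with its generalized eigenvectors), takes the trace and inverts a \( 3\times3 \) system for the coefficients in terms of \( T(0),\mathcal{B}_{12}(0),\mathcal{B}_{22}(0) \), whereas you observe from the start that \( (T,\mathcal{B}_{12},\mathcal{B}_{22}) \) closes and do the spectral analysis on the \( 3\times3 \) matrix directly --- a leaner route to the same eigenvalues. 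What your argument genuinely adds is the non-degeneracy of the slow mode: the left-eigenvector/Cauchy--Schwarz bound \( \ell\cdot X(0)\ge T(0)-\tfrac12\mathcal{B}_{11}(0) \) proves the leading coefficient is strictly positive, a point the paper only asserts (it computes \( c_0,r,m \) but never shows \( c_0>0 \), which is needed both for the exact rates and for the positive limit in the critical case). Two small repairs: the step \( c_1=\ell\cdot X(0)/(\ell\cdot v_1)>0 \) requires \( \ell\cdot v_1>0 \), not merely a positive \( T \)-coordinate of \( v_1 \); with your normalization one checks \( \ell\cdot v_1=1+4\gamma/(2\gamma+\tilde c)>0 \), so this is a one-line fix (and at \( \alpha=0 \) the trace equation decouples, making that case trivial and avoiding the \( \gamma/\alpha \) expression). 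Also note your Vieta identity \( \gamma+2\sigma=-\tilde c \) is the correct one (the paper's statement \( \gamma+2\sigma=\tilde c \) is a sign slip), and it still gives \( \zeta-2\sigma=\tfrac32\zeta+\tilde c\ge\tilde c \) at \( \alpha=\alpha_0 \); finally, like the paper, your strict positivity implicitly excludes the degenerate datum \( T(0)=0 \) (a Dirac mass at the origin), and the use of second-moment test functions under the bare finite-energy assumption is at the same level of justification as the paper's own derivation in Section \ref{sec:secondMomentEqu}.
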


    \begin{proof}
        Rewrite \eqref{stableeq} {for symmetric B} as
        \begin{equation} \label{equ:usfSecondMomentEquWithSelfSim}
            (2 \tilde{\beta} + \tilde{c}) B + \alpha E_{12} B + \alpha B E_{21} - \frac{\tilde{c}}{d} \mathrm{tr}(B) I = 0.
        \end{equation} By enumerating the indices, one has
        \begin{align}\label{2momentumequation}
            \displaystyle
            \left\{
            \begin{array}{rl}
                (2 \tilde{\beta} + \tilde{c}) B_{11} + 2 \alpha B_{12} - \frac{\tilde{c}}{d} \mathrm{tr}(B)                              = 0 & \  \mathrm{if}\; i = j = 1,                   \\
                (2 \tilde{\beta} + \tilde{c}) B_{1j} + \alpha B_{2j}                                                                                            = 0&  \  \mathrm{if}\; i = 1,\, j \neq 1,                    \\
                (2 \tilde{\beta} + \tilde{c}) B_{ii} - \frac{\tilde{c}}{d} \mathrm{tr}(B)                                                                            = 0 & \  \mathrm{if}\; i = j \neq 1,                \\
                (2 \tilde{\beta} + \tilde{c}) B_{ij}                                                                                                           = 0 & \  \mathrm{if}\ i, j, 1\ \mathrm{are\ distinct\ to\ each\ other,}\  d \geq 3.
            \end{array} \right.
        \end{align}

        Consider first the degenerate case \( \alpha = 0 \), or equivalently \( A = 0 \). Then \eqref{equ:usfSecondMomentEquWithSelfSim} becomes
        \begin{equation*}
            (2 \tilde{\beta} + \tilde{c}) B - \frac{\tilde{c}}{d} \mathrm{tr}(B) I = 0.
        \end{equation*}
        It is easy to see that there are only two solutions {$(\tilde{\beta},B)$ to the above equation}, the first one is \( \tilde{\beta} = -\tilde{c} / 2 \), \( \mathrm{tr}(B) = 0 \), which corresponds to the case that the eigenspace is of codimension \( 1 \) {in the space of symmetric matrices}, and the second one is \( \tilde{\beta} = 0 \), \( B =c I \) for any constant $c$, which is the eigenspace of dimension \( 1 \).

        Assume now that \( \alpha > 0 \). In the case where \( \tilde{\beta} = -\tilde{c} / 2 \) (corresponding to eigenvalue \( 0 \) of \( -L \)), we have \( \mathrm{tr}(B) = 0 \), which implies that the only symmetric positive semidefinite solution is \( B = 0 \). More precisely, the solution space consists exactly of symmetric matrices \( B \) with \( \mathrm{tr}(B) = 0 \) and \( B_{2j} = 0 \) for all \( j \), so it has codimension \( d + 1 \).

        Before proceeding to other solutions of \eqref{equ:usfSecondMomentEquWithSelfSim}, let us consider the behavior of \( (-L)^2 \) for \( d \geq 3 \). It is straightforward to get that
        \begin{equation*}
            L^2 B
            = 2 \alpha^2 E_{12} B E_{21} - \frac{\alpha \tilde{c}}{d} \mathrm{tr}(B) (E_{12} + E_{21}) - \frac{\tilde{c}}{d} (2 \alpha B_{12} - \tilde{c} \mathrm{tr}(B)) I.
        \end{equation*}
        Comparing each element, it holds that the kernel of \( (-L)^2 \) is defined by
        \begin{align*}
            \displaystyle
            \left\{
            \begin{array}{rl}
                0&= (L^2 B)_{11}
                = - 2 \frac{\tilde{c} \alpha}{d} B_{12} + 2 \alpha^2 B_{22} + \frac{\tilde{c}^2}{d} \mathrm{tr}(B),  \\
                0&= (L^2 B)_{12}
                = -\frac{\alpha \tilde{c}}{d} \mathrm{tr}(B),  \\
                0&= (L^2 B)_{ii}
                = - 2 \frac{\tilde{c} \alpha}{d} B_{12} + \frac{\tilde{c}^2}{d} \mathrm{tr}(B),
                \quad\mathrm{if}\; i \geq 2,
            \end{array} \right.
        \end{align*}
        or equivalently \( \mathrm{tr}(B) = B_{12} = B_{22} = 0 \), giving a nullspace of codimension \( 3 \).

        We now assume that \( 2 \tilde{\beta} + \tilde{c} \neq 0 \). It follows from \eqref{2momentumequation} that
        \begin{align*}
            &B_{1j}
            = 0 \quad \mathrm{if} \; j \geq 3, \\
            &B_{ii}
            = \frac{\tilde{c}}{d (2 \tilde{\beta} + \tilde{c})} \mathrm{tr}(B) \quad \mathrm{if}\; i \geq 2, \\
            &B_{12}
            = -\frac{\alpha}{2 \tilde{\beta} + \tilde{c}} B_{22}
            = -\frac{\alpha \tilde{c}}{d (2 \tilde{\beta} + \tilde{c})^2} \mathrm{tr}(B), \\
            &B_{11}
            = -\frac{2 \alpha}{2 \tilde{\beta} + \tilde{c}} B_{12} + \frac{\tilde{c}}{d (2 \tilde{\beta} + \tilde{c})} \mathrm{tr}(B)
            = \frac{\tilde{c}}{d (2 \tilde{\beta} + \tilde{c})} (1 + \frac{2 \alpha^2}{(2 \tilde{\beta} + \tilde{c})^2}) \mathrm{tr}(B).
        \end{align*}
        Hence, we have
        \begin{align}\label{trB}
            \mathrm{tr}(B)
            &= B_{11} + \sum\limits_{i = 2}^{d} B_{ii}
            = \frac{\tilde{c}}{d (2 \tilde{\beta} + \tilde{c})} (\frac{2 \alpha^2}{(2 \tilde{\beta} + \tilde{c})^2} + d) \mathrm{tr}(B).
        \end{align}
        If \( \mathrm{tr}(B) = 0 \), then \( B_{ii} = 0 \) for all \( i \) and \( B_{12} = 0 \), which implies \( B = 0 \). If \( \mathrm{tr}(B) \neq 0 \), then it follows from \eqref{trB} that $d \tilde{\beta} (2 \tilde{\beta} + \tilde{c})^2 = \tilde{c} \alpha^2$, or equivalently,
        \begin{equation} \label{equ:usfParaRelation}
            \begin{aligned}
                \tilde{\beta} (4 \tilde{\beta} + z^2 d c_{11})^2 &= 2 z^2 c_{11} \alpha^2.
            \end{aligned}
        \end{equation}
Direct calculation shows that the derivative of the cubic has two distinct
       	zeros, and the cubic is negative at the smallest zero, which indicates that \eqref{equ:usfParaRelation} has only one real root \( \tilde{\beta} = \gamma \)
    \begin{equation} \label{equ:usfParaRelationRealSol}
            \gamma=\frac{\tilde{c}}{3}(-1+\cosh Y)
            = \frac{2 \tilde{c}}{3} \sinh^2\frac{Y}{2}
            = \frac{z^2 d c_{11}}{3} \sinh^2(\frac{1}{6} \mathrm{arcosh}\big(1 + \frac{108}{d^3} (\frac{\alpha}{z^2 c_{11}})^2\big))
            \in (0, \frac{\alpha}{2 \sqrt{2 d}}],
        \end{equation}
        with \(Y = \frac{1}{3} \mathrm{arcosh}(1 + \frac{27}{d} (\alpha / \tilde{c})^2) = \frac{1}{3} \mathrm{arcosh}(1 + \frac{108}{d^3} (\frac{\alpha}{z^2 c_{11}})^2) > 0 \). {One may refer to page 473–477 in \cite{Holmes} for the formula.} This solution \( \alpha \mapsto \gamma \) is bijective on \( \mathbb{R}^+ \), strictly increasing, and has asymptotic \( \gamma \sim \alpha^2 \) on \( \alpha \ll 1 \) and \( \gamma \sim \alpha^{2 / 3} \) on \( \alpha \gg 1 \).

        The remaining two complex roots are
        \begin{equation} \label{equ:usfParaRelationComplexSol}
            \gamma_\pm
            = \frac{\tilde{c}}{3} (-1 - \frac{1}{2} \cosh Y \pm i \frac{\sqrt{3}}{2} \sinh Y)
            = \sigma_{R} \pm i \omega,
        \end{equation}
        with {real part \( \sigma_{R} = \mathbb{R}e\gamma_\pm = -\frac{\tilde{c}}{6} (2 + \cosh Y) < -\tilde{c} / 2 < 0 \), and imaginary part} \( \omega = \frac{\tilde{c}}{2 \sqrt{3}} \sinh Y > 0 \), and {\( 2 \gamma - 2 \sigma_{R} = \frac{\tilde{c}}{3} \cosh Y > 0 \), \( \gamma + 2 \sigma_{R} = -\tilde{c} <0 \).}

        For a root \( \tilde{\beta} \) of \eqref{equ:usfParaRelation}, the solution space of \eqref{equ:usfSecondMomentEquWithSelfSim} is of dimension \( 1 \), and each solution takes the form
        \begin{equation} \label{equ:usfSecondMomentSol}
            B
            = \mathrm{tr}(B) \Big(
            \frac{1}{d (1 + 2 \tilde{\beta} / \tilde{c})} I
            - \frac{\tilde{\beta}}{\alpha} (E_{12} + E_{21})
            + (1 - \frac{1}{1 + 2 \tilde{\beta} / \tilde{c}}) E_{11}
            \Big).
        \end{equation}
        Particularly, for the real root \( \gamma \),
        \begin{equation} \label{equ:usfSecondMomentRealSol}
            B = \frac{\mathrm{tr}(B)}{d (1 + 2 \gamma / \tilde{c})} \Big(
            I
            - \sqrt{d \frac{\gamma}{\tilde{c}}} (E_{12} + E_{21})
            + 2 d \frac{\gamma}{\tilde{c}} E_{11}
            \Big).
        \end{equation}

        Thus, assuming \( \mathrm{tr}(B) = d (1 + 2 \gamma / \tilde{c}) > 0 \), all eigenvalues of such \( B \) are, in non-increasing order,
        \begin{align*}
            \Lambda_1 &= 1 + d \frac{\gamma}{\tilde{c}} + \sqrt{d \frac{\gamma}{\tilde{c}} (1 + d \frac{\gamma}{\tilde{c}})}, \\
            \Lambda_2& = \ldots = \Lambda_{d - 1} = 1, \\
            \Lambda_d &= 1 + d \frac{\gamma}{\tilde{c}} - \sqrt{d \frac{\gamma}{\tilde{c}} (1 + d \frac{\gamma}{\tilde{c}})},
        \end{align*}
        which are all positive. Thus, \( B \) is positive definite, with the property that \( \Lambda_1 \to \infty \), \( \Lambda_d \to 1 / 2 \) when \( \alpha \to \infty \), and \( \Lambda_1^{-1} + \Lambda_d^{-1} = 2 \).

        Thus, on a given self-similar parameter \( \beta \), the operator \( B \mapsto (2 \beta + \zeta + \tilde{c}) B + L B \) has eigenvalues \( 2 \beta + \zeta - 2 \gamma, 2 \beta + \zeta - 2 \gamma_+, 2 \beta + \zeta - 2 \gamma_-, 2 \beta + \zeta + \tilde{c} \) where all except the last are simple eigenvalues, and  \( 2 \beta + \zeta + \tilde{c} \) has \( D - d - 1 \) linearly independent eigenvectors and \( d - 2 \) generalized eigenvectors of order \( 2 \) with \( D = \frac{d (d + 1)}{2} \) being the dimension of the space of symmetric \( d \times d \) complex matrices. Combined, the solution \( \mathcal{B}(t) \) of \eqref{equ:secondMomentEquWithSelfSim} on self-similar parameter \( \beta \) takes the form
        \begin{align*}
            \mathcal{B}(t)
            &= c_0 e^{-(2 \beta + \zeta - 2 \gamma) t} B_0
            + c_+ e^{-(2 \beta + \zeta - 2 \sigma_{R} - 2 i \omega) t} B_+
            + c_- e^{-(2 \beta + \zeta - 2 \sigma_{R} + 2 i \omega) t} B_- \\
            &\quad + e^{-(2 \beta + \zeta + \tilde{c}) t} (c_1 B_1 + \ldots + c_{D - d - 1} B_{D - d - 1})
            + t e^{-(2 \beta + \zeta + \tilde{c}) t} (c_1' B_1' + \ldots + c_{d - 2}' B_{d - 2}')
        \end{align*}
        for some constants \( c_0, c_+, c_-, c_1, \ldots, c_{D - d - 1}, c_1', \ldots, c_{d - 2}' \in \mathbb{C} \), nonzero matrices \( B_0, B_+, B_- \) of the form of \eqref{equ:usfSecondMomentRealSol} and \eqref{equ:usfSecondMomentSol}, nonzero symmetric matrices \( B_1, \ldots, B_{D - d - 1} \) which have trace and \( (2, j) \)-entries zero for all \( j \), and nonzero symmetric matrices \( B_1', \ldots, B_{d - 2}' \) which have trace, \( (1, 2) \)-entry, and \( (2, 2) \)-entry zero. Assuming \( B_0, B_+, B_- \) are normalized to have unit trace and noting that all except these three terms have zero trace, taking trace on \( \mathcal{B} \) yields the {temperature} \( T \coloneqq \mathrm{tr}(\mathcal{B}) \) of the form
        \begin{equation*}
            T(t) = c_0 e^{-(2 \beta + \zeta - 2 \gamma) t} + e^{-(2 \beta + \zeta - 2 \sigma_{R}) t} (c_+ e^{2 i \omega t} + c_- e^{-2 i \omega t}).
        \end{equation*}
        As \( T(t) \) should be a real number for all \( t \), we have that \( c_0 \in \mathbb{R} \) and \( c_+ = \overline{c_-} \). On \( c_+ = r + i m \) with \( r, m \in \mathbb{R} \), we have
        \begin{equation*}
            e^{2 \beta t} T(t)
            = c_0 e^{(2 \gamma - \zeta) t} + e^{(2 \sigma_{R} - \zeta) t} (2 r \cos(2 \omega t) - 2 m \sin(2 \omega t))
        \end{equation*}
        for some constants \( c_0, r, m \in \mathbb{R} \). By similarly considering the equation for \( \mathcal{B}_{12} \) and \( \mathcal{B}_{22} \), the three coefficients satisfy the relation
        \begin{align*}
            T(0) &= c_0 + 2 r, \\
            \mathcal{B}_{12}(0) &= -\frac{\gamma}{\alpha} c_0 - \frac{2 \sigma_{R}}{\alpha} r + \frac{2 \omega}{\alpha} m, \\
            \mathcal{B}_{22}(0) &= \frac{\tilde{c}}{d (2 \gamma + \tilde{c})} c_0 + \frac{2 \tilde{c} (2 \sigma_{R} + \tilde{c})}{d ((2 \sigma_{R} + \tilde{c})^2 + 4 \omega^2)} r + \frac{4 \tilde{c} \omega}{d ((2 \sigma_{R} + \tilde{c})^2 + 4 \omega^2)} m.
        \end{align*}
        Solving the system gives
        \begin{equation*}
            \begin{pmatrix}
                c_0 \\ r \\ m
            \end{pmatrix}
            = \frac{d}{\tilde{c}^2 \alpha^2} \left( 27 + 2 d \frac{\tilde{c}^2}{\alpha^2} \right)^{-\frac{1}{2}}
            \begin{pmatrix}
                4 \tilde{c} \omega (\sigma_{R}^2 + \omega^2) & 4 \tilde{c} \alpha \sigma_{R} \omega & 2 \tilde{c} \alpha^2 \omega \\
                -\frac{2}{3} \tilde{c} \omega (\tilde{c} \gamma - 8 \omega^2) & -2 \tilde{c} \alpha \sigma_{R} \omega & -\tilde{c} \alpha^2 \omega \\
                \frac{1}{6} \tilde{c}^2 (\tilde{c} \gamma - 8 \omega^2) & -\frac{1}{2} \tilde{c} \alpha (\tilde{c} \gamma - 8 \omega^2) & \frac{1}{2} \tilde{c} \alpha^2 (\tilde{c} + 3 \gamma)
            \end{pmatrix}
            \begin{pmatrix}
                T(0) \\ \mathcal{B}_{12}(0) \\ \mathcal{B}_{22}(0)
            \end{pmatrix}.
        \end{equation*}
        Particularly, we can see that \( c_0, r, m \), and so the precise behavior of \( T(t) \), are determined by the initial second moment matrix \( \mathcal{B}_0 = \mathcal{B}(0) \) only via \( \mathrm{tr}(\mathcal{B}_0) = \int |v|^2 f_0 \), \( (\mathcal{B}_0)_{12} = \int v_1 v_2 f_0 \) and \( (\mathcal{B}_0)_{22} = \int v_2^2 f_0 \), and the oscillating {terms} in \( T(t) \) vanish if these quantities {satisfy} \eqref{equ:usfSecondMomentRealSol}.

        Furthermore, we can see that in the long-time asymptotic, the energy
        \begin{itemize}
            \item
            grows exponentially if \( 2 \gamma > \zeta + 2 \beta \) (shearing dominant),

            \item
            decays exponentially if \( 2 \gamma < \zeta + 2 \beta \) (cooling dominant), and

            \item
            converges to some constant \( c_0 > 0 \) with rate \( \zeta - 2 \sigma_{R} > 0 \) if \( 2 \gamma = \zeta + 2 \beta \),
        \end{itemize}
    {where $\gamma$ depending on $z,\alpha,c_{11}$ and $d$ is defined in \eqref{equ:usfParaRelationRealSol},  and $\zeta$ depending only on $z$ is given in \eqref{Defzeta}.}
        Without self-similar scaling,
        the last case happens if \( \beta = 0 \) solves \eqref{equ:usfParaRelation}, or equivalently
        \begin{equation} \label{equ:balanceUSFParaWithoutSelfSim}
            \alpha=\alpha_0
            := (\zeta + z^2 \frac{d c_{11}}{2}) \sqrt{\frac{\zeta}{z^2 c_{11}}}
            = ((1 - z) (b_0 - b_1) + z \frac{d c_{11}}{2}) \sqrt{z (1 - z) \frac{b_0 - b_1}{c_{11}}}
            > 0,
        \end{equation}
        for $1/2<z<1$. It's also direct to see $\alpha_0\to 0$ as $z\to 1-$.
        This then completes the proof of Theorem \ref{thm.ss.comsign}.
    \end{proof}

    Note that \( \alpha_0 \) as in \eqref{equ:balanceUSFParaWithoutSelfSim} has a finite upper bound \( \alpha_0 < \frac{1}{4} (b_0 - b_1 + d c_{11} / 2) \sqrt{(b_0 - b_1) / c_{11}} < \infty \).

    \subsection{Stationary self-similar Radon profile} \label{sec:statRadonProfile}
    In \cite{JamesEtAl17SSP}, the existence of a stationary self-similar profile for elastic collision can be shown under the assumption that the shear strength $\|A\|$ is sufficiently small.
    However, due to the appearance of cooling effect for inelastic collision, we need to further impose the smallness of $\zeta$ as in \eqref{Defzeta} or equivalently the condition that the inelastic parameter $z$ is close to $1$, to establish the existence. {Here, the shearing matrix $A$ is not necessarily associated to a uniform shear.}

    \begin{theorem} \label{thm:existenceOfStatProfileWithSmallness}
        There exists \( \epsilon > 0 \) such that if \( \|A\| < \epsilon \) and \( 1 - z < \epsilon \), then there is some \( \beta \in \mathbb{R} \) and {a symmetric} positive definite matrix \( B \) with \( \mathrm{tr}(B) =  1 \) such that for all \( \Theta > 0 \), a self-similar profile \( G \in \mathcal{M}_+ \) exists where \eqref{equ:inelasticBoltzmannSelfSimStationary} is satisfied in the sense of measure with \( {\int_{\mathbb{R}^d}^{}} (1, v, {{v}^{\otimes 2}}) G(v) \, {\mathrm{d}{v}} = (1, 0, \Theta B) \). In particular, \( {\int_{\mathbb{R}^d}^{}} (1, v, |v|^2) G(v) \, {\mathrm{d}{v}} = (1, 0, \Theta) \).
    \end{theorem}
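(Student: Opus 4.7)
The plan is to first solve the finite-dimensional second-moment algebraic system \eqref{stableeq} for the pair $(\beta, B)$ by perturbation off the unperturbed point $A = 0$, $z = 1$, and then construct a weak-$*$ continuous semigroup acting on a compact convex subset of $\mathcal{M}_+(\mathbb{R}_c^d)$ whose fixed point yields the desired stationary self-similar profile $G$. The general case $\Theta > 0$ reduces to $\Theta = 1$ via the mass-preserving rescaling $G_\Theta(v) := \Theta^{-d/2} G(v / \sqrt{\Theta})$, which is preserved by both the pseudo-Maxwellian collision operator $Q_e$ and the drift term, and which sends unit second moment to second moment $\Theta B$.

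Step 1 (selection of $\beta$ and $B$): at the unperturbed point $A = 0$, $z = 1$, equation \eqref{stableeq} reduces to $2 \tilde\beta B = (\tilde c / d) \mathrm{tr}(B) I$, for which $\beta = 0$, $B = I / d$ is a solution with $\mathrm{tr}(B) = 1$ and $B$ positive definite. Viewing \eqref{stableeq} coupled with $\mathrm{tr}(B) = 1$ as a smooth system of $D + 1 := d(d+1)/2 + 1$ equations in the $D + 1$ unknowns $(\beta, B) \in \mathbb{R} \times \mathrm{Sym}_d(\mathbb{R})$, a direct computation using the nondegeneracy $\tilde c > 0$ (guaranteed by $b_0 > b_2$) shows that the linearization at the reference point has trivial kernel: any linearized solution $(\delta\beta, H)$ must satisfy $\tilde c H + (2/d) \delta\beta \cdot I = 0$ and $\mathrm{tr}(H) = 0$, forcing $\delta\beta = 0$ and $H = 0$. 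The implicit function theorem then provides a smooth branch $(\beta(A, z), B(A, z))$ on a small neighborhood of $(A, z) = (0, 1)$, with $B$ staying symmetric positive definite and $\mathrm{tr}(B) = 1$ by continuity.

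Step 2 (fixed point on an invariant compact set): fix $p \in (2, 4]$ and $M > 0$ to be determined, and set
\[
    \mathcal{K}_M := \Bigl\{ f \in \mathcal{M}_+(\mathbb{R}_c^d) : \|f\|_{\mathcal{M}} = 1,\ \textstyle\int v\, f(\mathrm{d}v) = 0,\ \int v^{\otimes 2} f(\mathrm{d}v) = B,\ \|f\|_p \leq M \Bigr\},
\]
which is convex and weak-$*$ compact, the $p$-moment bound preventing mass escape to the point at $\infty$ in the compactification. For $f_0 \in \mathcal{K}_M$, Theorem \ref{thm:existenceOfMildSol} applied to \eqref{equ:inelasticBoltzmannSelfSim} (with $A$ replaced by $A_\beta$) yields a unique global mild solution $f(t) = \Phi_t f_0$. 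Mass is preserved by \eqref{masscon}; momentum is preserved since $v$ is a collision invariant of $Q_e$ and the drift contributes $-A_\beta \int v\, f\, \mathrm{d}v = 0$ on the zero-momentum subspace; and the second moment is preserved because, by Step 1, $B$ is the constant solution of the linear matrix ODE \eqref{equ:secondMomentEquWithSelfSim} coinciding with the initial datum. The crucial $p$-moment bound comes from testing the weak formulation \eqref{equ:inelasticBoltzmann_weakForm} against $(1+|v|^2)^{p/2}$ and combining a Povzner-type splitting for inelastic pseudo-Maxwell collisions (cf.\ \cite{BobylevEtAl03PAP,MischlerMouhot06CPI,CarlenEtAl09SCT}) with the drift estimate $|\nabla_v(1+|v|^2)^{p/2} \cdot A_\beta v| \leq p\,\|A_\beta\|\,(1 + |v|^2)^{p/2}$, yielding
\[
    \frac{d}{dt}\|f(t)\|_p \leq -(\lambda_p - C_p\|A_\beta\|)\|f(t)\|_p + C_p',
\]
where $\lambda_p > 0$ is the pseudo-Maxwell spectral gap at order $p > 2$ and $C_p, C_p' > 0$ depend only on $p, d, b$ and $B$. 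Since $\|A_\beta\| \leq \|A\| + |\beta|$ is small when $(\|A\|, 1-z)$ are small by Step 1, shrinking $\epsilon$ enforces $C_p\|A_\beta\| < \lambda_p / 2$, and choosing $M$ sufficiently large yields $\|f(t)\|_p \leq M$ for all $t \geq 0$, i.e., $\Phi_t(\mathcal{K}_M) \subset \mathcal{K}_M$. The flow $\Phi_t$ is weak-$*$ continuous on $\mathcal{K}_M$ by the stability estimates of Lemma \ref{lem:propertyOfGainPartAndLinearPartSolver} combined with the uniform $p$-moment bound. A Schauder--Tikhonov fixed point argument applied to the time-averaged map $f_0 \mapsto \frac{1}{T}\int_0^T \Phi_t f_0\, dt$ on $\mathcal{K}_M$, followed by a Krylov--Bogoliubov style weak-$*$ limit as $T \to \infty$, produces $G \in \mathcal{K}_M$ with $\Phi_t G = G$ for every $t \geq 0$, hence a solution of \eqref{equ:inelasticBoltzmannSelfSimStationary} in the sense of measure.

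The main obstacle is the uniform-in-time $p$-moment estimate of Step 2: one must extract a strictly positive net decay rate from the competition between the inelastic collisional dissipation (giving $\lambda_p > 0$ via the pseudo-Maxwell spectral gap at $p > 2$) and the potentially destabilizing altered shear $A_\beta$, with constants explicit enough that the smallness of $\|A\|$ and $1 - z$ truly closes the differential inequality. A secondary difficulty is that the stability of $\Phi_t$ from Lemma \ref{lem:propertyOfGainPartAndLinearPartSolver} is expressed in the strong total-variation and $s$-moment norms, and must be promoted to weak-$*$ continuity on $\mathcal{K}_M$; this is legitimate precisely because the uniform $p$-moment bound forces tightness and convergence of integrals against test functions of growth strictly less than $|v|^p$.
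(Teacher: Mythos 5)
Your Step 1 (implicit function theorem at $(A,z)=(0,1)$, where $\tilde{\beta}=0$, $B=I/d$) is a sound substitute for the paper's citation of \cite[Lem.~4.16]{JamesEtAl17SSP}, the reduction to $\Theta=1$ by scaling is legitimate for Maxwell molecules, and your moment differential inequality is the same mechanism as the paper's Povzner-based estimate \eqref{sfn}--\eqref{A-beta}. The first genuine gap is your claim that weak-$*$ continuity of the flow on $\mathcal{K}_M$ follows from ``the stability estimates of Lemma \ref{lem:propertyOfGainPartAndLinearPartSolver} combined with the uniform $p$-moment bound.'' It does not: that lemma gives Lipschitz dependence on the initial datum in the total-variation and $s$-moment norms, which are far stronger than the weak-$*$ topology, and a map that is TV-Lipschitz can fail to be weak-$*$ continuous (a weak-$*$ convergent sequence of measures is in general not TV-convergent, so those estimates say nothing about the images); the $p$-moment bound only provides tightness, not continuity. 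This is precisely where the paper's proof does its real work: it establishes the Gr\"onwall-type bound $\mathscr{W}_1(f(t),g(t))\le C_T\,\mathscr{W}_1(f_0,g_0)$ by controlling the Lipschitz constant in $v_*$ of $v_*\mapsto\int\int b\, f(\tau,v)\,\psi(\tau,v')\,\mathrm{d}\sigma\,\mathrm{d}v$ through the rotation $R(v,v_*)$ change of variables (estimates \eqref{Lip*}--\eqref{Lip'*}); since on sets of probability measures with uniformly bounded $p$-moments, $p>2$, the metric $\mathscr{W}_1$ metrizes weak-$*$ convergence, this is what makes Schauder applicable. Some stability estimate in a metric compatible with the weak topology is indispensable and must be proved, not inferred from the strong-norm bounds.

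The second gap is the Krylov--Bogoliubov step. The standard argument that a weak-$*$ limit of fixed points of the averaged maps $f_0\mapsto\frac1T\int_0^T\Phi_t f_0\,\mathrm{d}t$ is invariant under the flow rests on the identity $\Phi_s\big(\frac1T\int_0^T\Phi_t f\,\mathrm{d}t\big)=\frac1T\int_0^T\Phi_{t+s}f\,\mathrm{d}t$, i.e.\ on (affine) linearity of $\Phi_s$; here $\Phi_s$ is genuinely nonlinear because $Q_e$ is quadratic, so a fixed point of the time-average need not converge to a stationary point of the flow. The paper avoids this entirely: Schauder is applied to $S(t)$ itself for each fixed $t>0$, giving $G_t$ with $S(t)G_t=G_t$, and then one lets $t_n\to0$, using compactness of $K$, the semigroup identity $S(n_k t_k)G_{t_k}=G_{t_k}$ with $n_k t_k\to t$, and joint continuity in time and datum to produce a single $G$ with $S(t)G=G$ for all $t>0$. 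Your scheme can be repaired by applying Schauder directly to $\Phi_{t_0}$ and running this $t_n\to0$ argument, but both repairs presuppose the weak-$*$ continuity discussed above. (A minor further point: the differential inequality for the $p$-moment should be derived on truncated data $G_{n0}=G_0\chi_{|v|\le n}$ and passed to the limit, as the paper does, since the formal computation presumes the $p$-moment is finite and differentiable in time.)
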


    \begin{proof}
        First, we need to {show the existence of} $B$ for the stationary second moment equation \eqref{stableeq} with the scaling parameter $\beta$. {By similar perturbation arguments as} in \cite[Lemma 4.16]{JamesEtAl17SSP}, for sufficiently small \( \|A\| \ll 1 \), there exist a \( \beta \in \mathbb{R} \) and a nontrivial symmetric positive definite stationary solution \( B \) to \eqref{stableeq}.
        Moreover, $\beta$ is chosen to be the complex number with the
        largest real part for which \eqref{stableeq} holds, and  \( B \) is normalized in this theorem with \( \mathrm{tr}(B) = 1 \).
        
        Take \( C_* > 0 \) {large enough} and \( s \in (2, 4] \). Let \( K \subseteq \mathcal{M}_+ \) be the set of measures \( G \) that satisfies \( \int_{\mathbb{R}^d} (1, v, {{v}^{\otimes 2}}) G(v) \mathrm{d}{v} = (1, 0, \Theta B) \) and \( \int |v|^s G \leq C_* \). {Note that $K$ depends on $C_*$, $s$ and $\Theta$.} It is easy to see that \( K \) is nonempty, bounded in \( \mathcal{M}_+\) and convex. Furthermore, \( K \) is weakly closed, then by Banach--Alaoglu theorem it is weak-* compact.

        Let \( S : [0, \infty) \times \mathcal{M}_+(\mathbb{R}_c^{d}) \to \mathcal{M}_+(\mathbb{R}_c^{d}) \) be the mild solution operator such that \( G(t) = S(t)G_0 \) is the mild solution from Theorem \ref{thm:existenceOfMildSol} with initial condition \( G_0 \). From Theorem \ref{thm:mildSolAsWeakSolAndProperties} we can see that \( t \mapsto S(t) \) is a semigroup {on $\mathcal{M}_+(\mathbb{R}_c^{d})$}.

        We first show that \( S \) is uniformly weakly continuous on \( [0, T] \times K \) for each \( T < \infty \).
  Let \( 0 \leq t_1 \leq t_2 \leq T < \infty \), and \( G_0(v) \in \mathcal{M}_+ \) with \( \int_{\mathbb{R}^d} |v|^s G_0(v) \mathrm{d}{v} < \infty \). By Theorem \ref{thm:mildSolAsWeakSolAndProperties}, \( G(t) = S(t)G_0 \) also has finite \( s \)-moment on \( [0, T] \). Let \( \varphi \in \mathit{C}^{0}(\mathbb{R}_c^{d}) \). Then for a cutoff function \( \varphi_n \in \mathit{C}^{2}(\mathbb{R}_c^{d}) \) with
        \( |v| |{ \nabla_{v} }\varphi_n| + |v|^2 |{ \nabla_{v}^2 }\varphi_n| \leq C( 1 + |v|^s) \) for some constant $C$ uniformly in $n$, we have
        \begin{align*}
            &\quad\;\,
            \Big|\int_{\mathbb{R}^d} \varphi_n(v) G(t_1,v) \mathrm{d}{v}- \int_{\mathbb{R}^d} \varphi_n(v) G(t_2,v) \mathrm{d}{v}\Big| \\
            &\leq {\int_{t_1}^{t_2}} {\int_{\mathbb{R}^d}^{}} 
                    |A v \cdot { \nabla_{v} }\varphi_n| G(t,v)
             \, {\mathrm{d}{v}}{\mathrm{d}{t}} \\
            &\quad+ \frac{1}{2} {\int_{t_1}^{t_2}} 
                {\int_{\mathbb{R}^d}^{}} 
                    {\int_{\mathbb{R}^d}^{}} 
                        {\int_{\mathbb{S}^{d-1}}^{}} 
                            b(\cos\theta) G(t,v) G(t,v_*)
                            |\varphi_n(v') + \varphi_n(v_*') - \varphi_n(v) - \varphi_n(v_*)|
                 \, {\mathrm{d}{\sigma}}{\mathrm{d}{v_*}}{\mathrm{d}{v}}
            {\mathrm{d}{t}},
        \end{align*}
        which, together with \eqref{boundf}, yields
        \begin{align*}
            &\quad\;\,
            \Big|{\int_{\mathbb{R}^d}^{}} \varphi_n(v) G(t_1, v) \, {\mathrm{d}{v}} - {\int_{\mathbb{R}^d}^{}} \varphi_n(v) G(t_2, v) \, {\mathrm{d}{v}}\Big| \\&\leq C{\int_{t_1}^{t_2}} {\int_{\mathbb{R}^d}^{}} 
                    |v| |{ \nabla_{v} }\varphi_n| G(t,v)
             \, {\mathrm{d}{v}}{\mathrm{d}{t}}
            + C{\int_{t_1}^{t_2}} 
                {\int_{\mathbb{R}^d}^{}} 
                    {\int_{\mathbb{R}^d}^{}} 
                        G(t,v) G(t,v_*) |v - v_*|^2 |{ \nabla_{v} ^2}\varphi_n|
                 \, {\mathrm{d}{v_*}}{\mathrm{d}{v}}
            {\mathrm{d}{t}} \\
            &\leq C{\int_{t_1}^{t_2}} (1 + |v|^s) G(t,v) \, {\mathrm{d}{t}}
            + C{\int_{t_1}^{t_2}} ({\int_{\mathbb{R}^d}^{}} (1 + |v|^s)G(t,v) \, {\mathrm{d}{v}})^2{\mathrm{d}{t}} \\
            &\leq C_T|t_2 - t_1| (1 + \|G_0\|_{s})^2,
        \end{align*}
        for some constant $C_T$ depending on $T$, \(|{ \nabla_{v} }\varphi_n| \) and \( |{ \nabla^2_{v} }\varphi_n| \). {By density of our test functions in $\mathit{C}^{0}(\mathbb{R}_c^{d})$,  the continuity holds for all test functions \( \varphi \in \mathit{C}^{0}(\mathbb{R}_c^{d}) \).}

        We now show the weak continuity of \( S(t) \) on \( K \) with fixed \( t \leq T < \infty \). Motivated by \cite{JamesEtAl17SSP}, this will be done via the 1-Wasserstein distance on \( \mathcal{M} \):
        \begin{equation*}
            \mathscr{W}_1(f, g) = \sup_{\mathrm{Lip}(\varphi) \leq 1} \int_{\mathbb{R}^d} \varphi(v) (f(v) - g(v)) \mathrm{d}{v},
        \end{equation*}
        where \( \mathrm{Lip}(\varphi) = \sup_{x \neq y} \frac{|\varphi(x) - \varphi(y)|}{|x - y|} \) is the Lipschitz constant of \( \varphi \).
        Let \( f(t) = S(t)f_0, g(t) = S(t)g_0 \) be mild solutions {with}  \( f_0, g_0 \in \mathcal{M}_+ \) {and} \( {\int_{\mathbb{R}^d}^{}} (f_0(v), g_0(v)) \, {\mathrm{d}{v}} = (1, 1) \) and finite \( s \)-moments, and \( \varphi \in \mathit{C}(\mathbb{R}_c^{d}) \) with {finite Lipschitz constant that satisfies} \( \mathrm{Lip}(\varphi) \leq 1 \). Then for fixed \( 0 \leq t \leq T < \infty \) and \( \psi(t', v) = \varphi(e^{(t - t') A} v) \) on \( t' \in [0, t] \) which satisfies \( {\partial_{t'}} \psi + A v \cdot { \nabla_{v} }\psi = 0 \), we first write
        \begin{align*}
            {\int_{\mathbb{R}^d}^{}} \varphi(v) (f(t, v) - g(t, v)) \, {\mathrm{d}{v}} &- {\int_{\mathbb{R}^d}^{}} \varphi(e^{t A} v) (f_0(v) - g_0(v)) \, {\mathrm{d}{v}} \\
            &= {\int_{\mathbb{R}^d}^{}} \psi(t, v) (f(t, v) - g(t, v)) \, {\mathrm{d}{v}} - {\int_{\mathbb{R}^d}^{}} \psi(0, v) (f_0(v) - g_0(v)) \, {\mathrm{d}{v}},
        \end{align*}
        and it further follows from the weak formulation \eqref{equ:inelasticBoltzmann_weakForm} that
        \begin{equation} \label{Wfg}
            \begin{aligned}[b]
                &\quad\;\,
                \int_{\mathbb{R}^d} \varphi (f(t,v) - g(t,v)) \mathrm{d}{v} - \int_{\mathbb{R}^d} \varphi(e^{t A} v) (f_0(v) - g_0(v)) \mathrm{d}{v} \\
                &= \frac{1}{2} {\int_{0}^{t}} 
                    {\int_{\mathbb{R}^d}^{}} 
                        {\int_{\mathbb{R}^d}^{}} 
                            {\int_{\mathbb{S}^{d - 1}}^{}} 
                                b(\frac{v - v_*}{|v - v_*|} \cdot \sigma) \\
                                &\qquad\qquad\qquad\qquad\times f(\tau,v) (f(\tau,v_*) - g(\tau,v_*)) (\psi(\tau,v') + \psi(\tau,v_*') - \psi(\tau,v) - \psi(\tau,v_*))
                             \, {\mathrm{d}{\sigma}}
                        {\mathrm{d}{v_*}}
                    {\mathrm{d}{v}}
                {\mathrm{d}{\tau}} \\
                &\quad
                + \frac{1}{2} {\int_{0}^{t}} 
                    {\int_{\mathbb{R}^d}^{}} 
                        {\int_{\mathbb{R}^d}^{}} 
                            {\int_{\mathbb{S}^{d - 1}}^{}} 
                                b(\frac{v - v_*}{|v - v_*|} \cdot \sigma) \\
                                &\qquad\qquad\qquad\qquad\times  (f(\tau,v) - g(\tau,v)) g(\tau,v_*) (\psi(\tau,v') + \psi(\tau,v_*') - \psi(\tau,v) - \psi(\tau,v_*))
                             \, {\mathrm{d}{\sigma}}
                        {\mathrm{d}{v_*}}
                    {\mathrm{d}{v}}
                {\mathrm{d}{\tau}} \\
                &\quad - {\int_{}^{}} \psi(0,v) (f_0(v) - g_0(v)) \, {\mathrm{d}{v}}.
            \end{aligned}
        \end{equation}
        Since \( \mathrm{Lip}(\varphi) \leq 1 \), on \( \tau \in [0, T] \) we have \( \mathrm{Lip}(\psi(\tau)) = \mathrm{Lip}(\varphi(e^{(t - \tau) A} v)) \leq \mathrm{Lip}(\varphi) \|e^{(t - \tau) A}\| \leq \sup_{\tau\in[0, T]} \|e^{\tau A}\| \) which is a constant that depends only on \( T \). Hence, one has
        \begin{equation*}
            {\int_{\mathbb{R}^d}^{}} \varphi(e^{t A} v) (f_0(v) - g_0(v)) \, {\mathrm{d}{v}}
            = \sup_{\tau\in[0, T]} \|e^{\tau A}\| {\int_{\mathbb{R}^d}^{}} \frac{\varphi(e^{t A} v)}{\sup_{[0, T]} \|e^{\tau' A}\|} (f_0(v) - g_0(v)) \, {\mathrm{d}{v}}
            \leq C\mathscr{W}_1(f_0, g_0).
        \end{equation*}
        Notice that
        \begin{align}
            {\mathrm{Lip}_{v_*}\Big(
            {\int_{\mathbb{R}^d}^{}} 
                {\int_{\mathbb{S}^{d - 1}}^{}} 
                    b(\frac{v - v_*}{|v - v_*|} \cdot \sigma)f(\tau,v) \psi(\tau,v_*)
                 \, {\mathrm{d}{\sigma}}
            {\mathrm{d}{v}}
            \Big)}
            &\leq C_T \mathrm{Lip}_{v_*}(\psi(\tau,v_*))
            \leq C_T, \label{Lip*}\\
            {\mathrm{Lip}_{v_*}\Big(
            -{\int_{\mathbb{R}^d}^{}} 
                {\int_{\mathbb{{S}^{d - 1}}}^{}} 
                    b(\frac{v - v_*}{|v - v_*|} \cdot \sigma)f(\tau,v) \psi(\tau,v)
                 \, {\mathrm{d}{\sigma}}
            {\mathrm{d}{v}}
            \Big)}
            &=0 \label{Lip} .
        \end{align}
        To estimate \( \mathrm{Lip}_{v_*}(\iint b f(\tau,v) \psi(\tau,v') \mathrm{d}{\sigma} \mathrm{d}{v}) \), let \( \hat{e} \in \mathbb{S}^{d-1} \) be a fixed unit vector, and \( R = R(v, v_*) \) be a rotation such that \( R^\mathsf{T}\frac{v - v_*}{|v - v_*|} = \hat{e} \). Then we use change of variables to get
        \begin{equation} \label{transR}
            \begin{aligned}[b]
                &\quad\;\,
                {\int_{\mathbb{R}^d}^{}} 
                    {\int_{\mathbb{S}^{d - 1}}^{}} 
                        b(\frac{v - v_*}{|v - v_*|} \cdot \sigma) f(\tau,v) \psi(\tau,v)
                     \, {\mathrm{d}{\sigma}}
                {\mathrm{d}{v}} \\
                &= {\int_{\mathbb{R}^d}^{}} 
                    {\int_{\mathbb{S}^{d - 1}}^{}} 
                        b(\frac{v - v_*}{|v - v_*|} \cdot R\sigma) {f(\tau,v)} \psi\Big(\tau, \frac{v + v_*}{2} + \frac{1 - z}{2} (v - v_*) + \frac{z}{2} |v - v_*| R \sigma\Big)
                     \, {\mathrm{d}{\sigma}}
                {\mathrm{d}{v}} \\
                &= {\int_{\mathbb{R}^d}^{}} 
                    {\int_{\mathbb{S}^{d - 1}}^{}} 
                        b(\hat{e} \cdot \sigma) {f(\tau,v)} \psi(\tau, \tilde{v}')
                     \, {\mathrm{d}{\sigma}}
                {\mathrm{d}{v}},
            \end{aligned}
        \end{equation}
        with \( \tilde{v}'(v, v_*, \sigma) = \frac{v + v_*}{2} + \frac{1 - z}{2} (v - v_*) + \frac{z}{2} |v - v_*| R(v, v_*) \sigma \).
        Since
        \begin{equation*}
            \left\|{ \nabla_{v_*} } (|v - v_*| R(v, v_*) \sigma)\right\|
            \leq \left\|(R(v, v_*) \sigma) \otimes \frac{v - v_*}{|v - v_*|}\right\| + |v - v_*| \left\|{\partial_{v_*}} \frac{v - v_*}{|v - v_*|}\right\|
            \leq C,
        \end{equation*}
        uniformly on \( v_* \), we have
        \begin{equation*}
            \mathrm{Lip}_{v_*}(\tilde{v}')
            \leq \frac{1}{2} + \frac{1 - z}{2} + \frac{z}{2} \mathrm{Lip}_{v_*}(|v - v_*| R(v, v_*) \sigma)
            \leq C + C\sup_{v_*} \|{ \nabla_{v_*} } (|v - v_*| R(v, v_*) \sigma)\|
            \leq C,
        \end{equation*}
        which, together with \eqref{transR}, implies
        \begin{equation} \label{Lip'}
            \begin{aligned}[b]
                &\mathrm{Lip}_{v_*}(
                {\int_{\mathbb{R}^d}^{}} 
                    {\int_{\mathbb{S}^{d - 1}}^{}} 
                        b(\frac{v - v_*}{|v - v_*|} \cdot \sigma) f(\tau,v) \psi(\tau,v')
                     \, {\mathrm{d}{\sigma}}
                {\mathrm{d}{v}}
                ) \\
                \leq& C_T {\int_{\mathbb{R}^d}^{}} 
                    {\int_{\mathbb{S}^{d - 1}}^{}} 
                        b(\hat{e} \cdot \sigma) f(\tau,v)\mathrm{Lip}_{\tilde{v}'}(\psi(\tau,\tilde{v}')) \mathrm{Lip}_{v_*}(\tilde{v}')
                     \, {\mathrm{d}{\sigma}}
                {\mathrm{d}{v}}
                \leq C_T.
            \end{aligned}
        \end{equation}
        Similar arguments give that
        \begin{align}\label{Lip'*}
            \mathrm{Lip}_{v_*}(
            {\int_{\mathbb{R}^d}^{}} 
                {\int_{\mathbb{S}^{d - 1}}^{}} 
                    b(\frac{v - v_*}{|v - v_*|} \cdot \sigma) f(\tau,v) \psi(\tau,v_*')
                 \, {\mathrm{d}{\sigma}}
            {\mathrm{d}{v}}
            ) \leq C_T.
        \end{align}
        Combining \eqref{Lip*}, \eqref{Lip}, \eqref{Lip'} and \eqref{Lip'*}, we have
        \begin{equation*}
            \mathrm{Lip}_{v_*}(
            {\int_{\mathbb{R}^d}^{}} 
                {\int_{\mathbb{S}^{d - 1}}^{}} 
                    b(\frac{v - v_*}{|v - v_*|} \cdot \sigma) f(\tau,v) (\psi'(\tau) + \psi_*'(\tau) - \psi(\tau) - \psi'(\tau))
                 \, {\mathrm{d}{\sigma}}
            {\mathrm{d}{v}}
            )
            \leq C_T,
        \end{equation*}
        which gives
        \begin{align*}
            &{\int_{\mathbb{R}^d}^{}} 
                {\int_{\mathbb{R}^d}^{}} 
                    {\int_{\mathbb{S}^{d - 1}}^{}} 
                        b(\frac{v - v_*}{|v - v_*|} \cdot \sigma) f(\tau,v) (f(\tau,v_*) - g(\tau,v_*)) (\psi(\tau,v') + \psi(\tau,v_*') - \psi(\tau,v) - \psi(\tau,v_*))
                     \, {\mathrm{d}{\sigma}}
                {\mathrm{d}{v_*}}
            {\mathrm{d}{v}} \\
            \leq& C_T\mathscr{W}_1(f(\tau), g(\tau)),
        \end{align*}
        and
        \begin{align*}
            &{\int_{\mathbb{R}^d}^{}} 
                {\int_{\mathbb{R}^d}^{}} 
                    {\int_{\mathbb{S}^{d - 1}}^{}} 
                        b(\frac{v - v_*}{|v - v_*|} \cdot \sigma)(f(\tau,v) - g(\tau,v)) g(\tau,v_*) (\psi(\tau,v') + \psi(\tau,v_*') - \psi(\tau,v) - \psi(\tau,v_*))
                     \, {\mathrm{d}{\sigma}}
                {\mathrm{d}{v_*}}
            {\mathrm{d}{v}} \\
            \leq& C_T\mathscr{W}_1(f(\tau), g(\tau)).
        \end{align*}
        It holds from the above two inequalities and \eqref{Wfg} that
        \begin{equation*}
            \mathscr{W}_1(f(t), g(t)) \leq C_T ( \mathscr{W}_1(f_0, g_0) + {\int_{0}^{t}} \mathscr{W}_1(f(\tau), g(\tau)) \, {\mathrm{d}{\tau}}),
        \end{equation*}
        which further yields by Gronwall inequality that \( \mathscr{W}_1(f(t), g(t)) \leq C_T \mathscr{W}_1(f_0, g_0) \) with the constant $C_T$ depending only on \( T \). Hence, for each \( t > 0 \), \( S(t) : K \to \mathcal{M}_+ \) is weakly continuous.

        Recalling \( K \subseteq \mathcal{M}_+ \) is the set of measures \( G \) with \( \int (1, v, {{v}^{\otimes 2}}) G = (1, 0, \Theta B) \) and \( \int |v|^s G \leq C_* \), we now show that \( S(t)K \subseteq K \) if \( C_* \) is sufficiently large. By property of mild solution and the selection of \( \beta \), it suffices to show that \( \int_{\mathbb{R}^d} |v|^s G(v) \mathrm{d}{v} \leq C_* \) on \( G(t) = S(t)G_0 \) for \( G_0 \in K \). Let \( G_{n0} = G_0 {\chi_{|v| \leq n}} \) and \( G_n(t) = S(t)G_{n0} \). As in Theorem \ref{thm:mildSolAsWeakSolAndProperties}, we have \( \|G_n(t) - G(t)\|_{2} \to 0 \) {when $n\rightarrow\infty$}. By the selection of \( B \) and the fact that \( \int_{\mathbb{R}^d} (1 + |v|^2) G(t,v) \mathrm{d}{v} = 1 + \Theta > 0 \) is constant, then for sufficiently large \( n \) we have $$ \int_{\mathbb{R}^d} (1 + |v|^2) G_n(t,v) \mathrm{d}{v} \leq C \int_{\mathbb{R}^d} (1 + |v|^2) G(t,v) \mathrm{d}{v} \leq C_T $$ at the given time \( t\leq T < \infty \) with constant independent of \( n \). 

        For \( s > 2 \), \cite[Lem.~3.3]{GambaEtAl04BED} gives the following Povzner inequality
        \begin{align}\label{Povzner}
            &{\int_{\mathbb{S}^{d-1}}^{}} b(\cos\theta) (|v'|^s + |v_*'|^s - |v|^s - |v_*|^s) \, {\mathrm{d}{\sigma}} = P - N,
        \end{align}
        with
        \begin{align}
            \quad
            &P \leq C(|v|^2 |v_*|^{s - 2} + |v_*|^2 |v|^{s - 2}),\label{P} \\
            &N \geq c(|v|^2 + |v_*|^2)^{s / 2},\label{N}
        \end{align}
        where the constants $C$ and $c$ depend only on $s$ and $b_0$.
        It follows from \eqref{equ:inelasticBoltzmann_weakForm}, \eqref{Povzner}, \eqref{P} and \eqref{N} that
        \begin{equation} \label{sfn}
            \begin{aligned}[b]
                \frac{\mathrm{d}}{{\mathrm{d} t}} {\int_{\mathbb{R}^d}^{}}  |v|^s G_n(t, v)  \, {\mathrm{d}{v}}
                &= -s {\int_{\mathbb{R}^d}^{}}  v^\mathsf{T} A_\beta v |v|^{s - 2} G_n(t, v)  \, {\mathrm{d}{v}} \\
                &\quad+ \frac{1}{2} {\int_{\mathbb{R}^d}^{}} 
                    {\int_{\mathbb{R}^d}^{}} 
                        G_n(t, v) G_n(t, v_*)
                        {\int_{\mathbb{S}^{d - 1}}^{}} 
                            b(\cos\theta) (|v'|^s + |v_*'|^s - |v|^s - |v_*|^s)
                         \, {\mathrm{d}{\sigma}}
                    {\mathrm{d}{v_*}}
                {\mathrm{d}{v}} \\
                &\leq s (\left\|A\right\| - \beta) {\int_{\mathbb{R}^d}^{}} 
                    |v|^s G_n(t, v)
                 \, {\mathrm{d}{v}} \\
                &\quad+ \frac{1}{2} {\int_{\mathbb{R}^d}^{}} 
                    {\int_{\mathbb{R}^d}^{}} 
                        G_n(t, v) G_n(t, v_*)
                        (
                        C (|v|^2 |v_*|^{s - 2} + |v_*|^2 |v|^{s - 2})
                        - c (|v|^s + |v_*|^s)
                        )
                    {\mathrm{d}{v_*}}
                {\mathrm{d}{v}} \\
                &= C ({\int_{\mathbb{R}^d}^{}}  |v|^2 G_n(t, v)  \, {\mathrm{d}{v}}) ({\int_{\mathbb{R}^d}^{}}  |v|^{s - 2} G_n(t, v)  \, {\mathrm{d}{v}})
                + (s \left\|A\right\| - s \beta - c) {\int_{\mathbb{R}^d}^{}}  |v|^s G_n(t, v)  \, {\mathrm{d}{v}}.
            \end{aligned}
        \end{equation}
        The first term on the right-hand side above is controlled by
        \begin{equation} \label{control1}
            \begin{aligned}[b]
                ({\int_{\mathbb{R}^d}^{}}  |v|^2 {G_n(v)}  \, {\mathrm{d}{v}}) ({\int_{\mathbb{R}^d}^{}}  |v|^{s - 2} G_n(v)  \, {\mathrm{d}{v}})
                &\leq ({\int_{\mathbb{R}^d}^{}}  (1 + |v|^2) G_n(v)  \, {\mathrm{d}{v}})^2
                \leq C_T ({\int_{\mathbb{R}^d}^{}}  (1 + |v|^2) G_{n0}(v)  \, {\mathrm{d}{v}})^2 \\
                &\leq C_T ({\int_{\mathbb{R}^d}^{}}  (1 + |v|^2) G_0(v)  \, {\mathrm{d}{v}})^2.
            \end{aligned}
        \end{equation}
        Recalling \( |\tilde{\beta}| = |\beta + \zeta / 2| \leq C \|A\| \) and $\zeta$ defined in \eqref{Defzeta}, we choose \( z, A \) such that \( 1 - z \) and \( \|A\|  \) are sufficiently small with
        \begin{equation}\label{A-beta}
            \|A\| - \beta
            \leq \|A\| + |\tilde{\beta}| + \zeta / 2
            \leq C( \|A\| + (1 - z))
            < c / s.
        \end{equation}
        Collecting \eqref{sfn}, \eqref{control1} and \eqref{A-beta}, we have
        \begin{equation*}
            \frac{\mathrm{d}}{{\mathrm{d} t}} {\int_{\mathbb{R}^d}^{}}  |v|^s G_n(t, v)  \, {\mathrm{d}{v}}
            \leq C_T ({\int_{\mathbb{R}^d}^{}}  (1 + |v|^2) G_0  \, {\mathrm{d}{v}})^2
            - \delta {\int_{\mathbb{R}^d}^{}}  |v|^s G_n(t, v)  \, {\mathrm{d}{v}}
        \end{equation*}
        for some \( C_T, \delta > 0 \) and thus by Gronwall inequality \( \int_{\mathbb{R}^d} |v|^s G_n(t,v) \leq C_* \) if \( \int_{\mathbb{R}^d} |v|^s G_0(t) \leq C_* \) with \( C_* > 0 \) sufficiently large. {Passing to the limit}, we have \( \int_{\mathbb{R}^d} |v|^s G(t,v) \leq C_* \) on each finite \( t \in [0, T] \). This implies that for each \( t > 0 \), \( S(t) K \subseteq K \).

        By Schauder fixed point theorem, for each \( 0 < t < T < \infty \), there exists a fixed point \( G_t \in K \) such that \( S(t) G_t = G_t \). By the semigroup property of \( S(t) \), \( S(n t)G_t = G_t \) for all \( n \in \mathbb{Z}^+ \). Let \( t_n > 0 \) be a sequence that \( t_n \to 0 \) {when $n\rightarrow\infty$}. Since \( K \) is closed and compact, up to a subsequence, we have that \( G_{t_n} \to G \) {when $n\rightarrow\infty$} for some \( G \in K \).

        Let \( t > 0 \). Then there exists \( n_k \in \mathbb{Z}^+ \) such that \( n_k t_k \to t \) {when $k\rightarrow\infty$}. Therefore, by convergence of \( G_{t_k} \) and continuity of \( S \) in time, it holds that
        \begin{equation*}
            G_{t_k}
            = S(n_k t_k) G_{t_k}
            = (S(n_k t_k) - S(t)) G_{t_k} + S(t) G_{t_k}
            \to 0 + S(t) G, {\quad k\rightarrow\infty,}
        \end{equation*}
        which yields \( S(t)G = G \) for all \( t > 0 \). Hence, the proof of Theorem \ref{thm:existenceOfStatProfileWithSmallness} is finished.
    \end{proof}

    {The above theorem} gives us the existence of stationary Radon profile noted in Theorem \ref{thm:mainResult-statRadonProfile}. Note also that by the construction, \( \beta \) is chosen such that the temperature of stationary profile is a finite positive number. For uniform shear flow matrix, the discussion in Theorem \ref{thm.ss.comsign} implies the sign relation in Theorem \ref{thm:mainResult-signOfUSFBeta}.

    From the proof, we can also see that the profile has finite moment of order \( s \in (2, 4] \), given that \( A \) and \( 1 - z \) are sufficiently small.

    We remark that the smallness assumption posed on \( \|A\|\) and \( 1 - z \) are needed only in two places:
    \begin{itemize}
        \item the existence of stationary solution to the second moment equation \eqref{equ:secondMomentEquWithSelfSim},
        \item the control \( \|A\| - \beta < c / s \) for Gronwall inequality in \eqref{A-beta}.
    \end{itemize}

    \subsubsection{Sufficient condition to self-similar solutions for uniform shear flow}
    In \cite{JamesEtAl17SSP}, a variant of Povzner inequality is proposed to tackle the existence problem of self-similar profile for elastic USF. The same variant also works for inelastic collision and a more generic class of shearing matrix, assuming {that} additional conditions on the kernel and the parameters are satisfied.
    
    \begin{lemma} \label{thm:jamesPovznerIneq}
        Let \( \beta \in \mathbb{R} \) be chosen as in Theorem \ref{thm:existenceOfStatProfileWithSmallness}. Suppose the kernel \( b \) is continuous in the sense that
        \begin{equation*}
            (v, w)
            \mapsto {\int_{\mathbb{S}^{d-1}}^{}} 
                b\big(\sigma \cdot \frac{v - w}{|v - w|}\big) g(v, w, \sigma)
             \, {\mathrm{d}{\sigma}}
        \end{equation*}
        is continuous for all continuous functions \( g : \mathbb{R}^d \times \mathbb{R}^d \times \mathbb{S}^{d-1} \to \mathbb{R} \), \eqref{stableeq} holds for  \( A^\mathsf{T} \), and self-similar parameter \( \beta \) has a symmetric positive definite stationary solution \( \tilde{B} \in \mathbb{R}^{d \times d} \). If
        \begin{equation} \label{equ:jamesCondition}
            \max_{|v| = 1} \mathcal{W}(v; W) + \mathcal{H}(v) < 0,
        \end{equation}
        for some matrix \( W \in \mathbb{R}^{d \times d} \), where
        \begin{equation*}
            \mathcal{W}(v; W)
            \coloneqq {\int_{\mathbb{S}^{d-1}}^{}} 
                b\big(\sigma \cdot \frac{v}{|v|}\big) (W^+ + W^- - W)
             \, {\mathrm{d}{\sigma}} - A_\beta v \cdot { \nabla_{v} }W ,
        \end{equation*}
        and
        \begin{equation*}
            \mathcal{H}(v)
            \coloneqq {\int_{\mathbb{S}^{d-1}}^{}} 
                b\big(\sigma \cdot \frac{v}{|v|}\big) (\tilde{B}^+ \ln \tilde{B}^+ + \tilde{B}^- \ln \tilde{B}^- - \tilde{B} \ln \tilde{B})
             \, {\mathrm{d}{\sigma}} - (1 + \ln \tilde{B}) A_\beta v \cdot { \nabla_{v} }\tilde{B},
        \end{equation*}
        with matrices acting as the induced quadratic form \( W(v) = v^\mathsf{T} W v \) and
        $$
        v^- = \frac{z}{2} (v - |v| \sigma) , \quad v^+ = v - v^-,
        $$
        then there exists \( \epsilon_0 > 0 \) such that for all \( \epsilon \in (0, \epsilon_0) \), \( \varPhi(v) = (\tilde{B}(v) + \epsilon W(v))^{1 + \epsilon} \) is homogeneous of degree \( s = 2 (1 + \epsilon) > 2 \) and satisfies
        \begin{align*}
            U\varPhi(v, v_*)
            &\coloneqq {\int_{\mathbb{S}^{d-1}}^{}} 
                b\big(\sigma \cdot \frac{v - v_*}{|v - v_*|}\big) (\varPhi(v') + \varPhi(v_*') - \varPhi(v) - \varPhi(v_*))
             \, {\mathrm{d}{\sigma}} - A_\beta v \cdot { \nabla_{v} }\varPhi \\
            &\leq C |v|^{s / 2} |v_*|^{s / 2} - \kappa \varPhi ,
        \end{align*}
        for some \( C, \kappa > 0 \) {and} \( |v_*| \leq |v| \), and $ \varPhi(v) \leq C |v|^s.$
    \end{lemma}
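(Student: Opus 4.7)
The plan is to exploit the homogeneity of $\varPhi$ of degree $s=2(1+\epsilon)$ to reduce the desired Povzner-type inequality to testing on the unit sphere $|v|=1$ with $|v_*|\le 1$, and then to expand $U\varPhi$ in $\epsilon$ around the reference case $v_*=0$, where the hypothesis \eqref{equ:jamesCondition} supplies the required negativity. Writing $\varPhi=\Psi^{1+\epsilon}$ with $\Psi(v)=\tilde B(v)+\epsilon W(v)$, for all $\epsilon$ sufficiently small the matrix $\tilde B+\epsilon W$ remains positive definite, so $c|v|^2\le\Psi(v)\le C|v|^2$ on $\mathbb{R}^d$; hence $\varPhi$ is homogeneous of degree $s$ and satisfies $\varPhi(v)\le C|v|^s$ at once.

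\textbf{Expansion and cancellation at $v_*=0$.} The key ingredient is the expansion
\begin{equation*}
\varPhi(v)=\tilde B(v)+\epsilon\bigl[\tilde B(v)\ln\tilde B(v)+W(v)\bigr]+O(\epsilon^2),
\end{equation*}
obtained by combining $\Psi^{1+\epsilon}=\Psi+\epsilon\Psi\ln\Psi+O(\epsilon^2\Psi(\ln\Psi)^2)$ with $\ln(\tilde B+\epsilon W)=\ln\tilde B+O(\epsilon)$, where the remainder is of the form $\epsilon^2|v|^2(1+|\ln|v||)^2$ and remains integrable when composed with $\sigma\mapsto v^\pm$. When $v_*=0$ the collision formulas give $v'=v^+$ and $v_*'=v^-$, so $U\varPhi(v,0)=U_0\varPhi(v)$ where
\begin{equation*}
U_0 g(v)\coloneqq\int_{\mathbb{S}^{d-1}}b\bigl(\sigma\cdot\tfrac{v}{|v|}\bigr)\bigl(g(v^+)+g(v^-)-g(v)\bigr)\mathrm{d}\sigma-A_\beta v\cdot\nabla_v g(v).
\end{equation*}
Using the moment identities $\int b=b_0$, $\int b\,\sigma=b_1\hat v$, $\int b\,\sigma\sigma^\mathsf{T}=(b_0-dc_{11})\hat v\hat v^\mathsf{T}+c_{11}I$, a direct calculation yields $U_0\tilde B(v)=-v^\mathsf{T}\bigl[(2\beta+\zeta+\tilde c)\tilde B+A^\mathsf{T}\tilde B+\tilde B A-\tfrac{\tilde c}{d}\mathrm{tr}(\tilde B)I\bigr]v$, which vanishes because $\tilde B$ solves \eqref{stableeq} with $A$ replaced by $A^\mathsf{T}$. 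Combining with $\nabla_v(\tilde B\ln\tilde B)=(1+\ln\tilde B)\nabla_v\tilde B$, the remaining leading-order contributions are $U_0(\tilde B\ln\tilde B)=\mathcal{H}$ and $U_0 W=\mathcal{W}(\cdot;W)$, giving
\begin{equation*}
U\varPhi(v,0)=\epsilon\bigl[\mathcal{H}(v)+\mathcal{W}(v;W)\bigr]+O(\epsilon^2).
\end{equation*}

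\textbf{Conclusion and main obstacle.} By \eqref{equ:jamesCondition} and compactness of $\mathbb{S}^{d-1}$, there is $c_0>0$ such that $\mathcal{H}+\mathcal{W}(\cdot;W)\le -c_0$ on $|v|=1$, so for $\epsilon\in(0,\epsilon_0)$ with $\epsilon_0$ small, $U\varPhi(v,0)\le-c_0\epsilon/2$ uniformly on the unit sphere. The continuity hypothesis on $b$ and dominated convergence give $U\varPhi(v,v_*)\to U_0\varPhi(v)$ as $v_*\to 0$, uniformly in $|v|=1$; hence some $\delta>0$ exists with $U\varPhi(v,v_*)\le -c_0\epsilon/4$ for $|v|=1$, $|v_*|\le\delta$. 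On the complementary annulus $\delta\le|v_*|\le 1$ the crude uniform bound $|U\varPhi(v,v_*)|\le M<\infty$ (from boundedness of $\varPhi,\nabla_v\varPhi$ on bounded sets and $b_0<\infty$) combined with $M\le M\delta^{-s/2}|v_*|^{s/2}$ closes the estimate, and unscaling via the $s$-homogeneity of $U\varPhi$ yields $U\varPhi(v,v_*)\le C|v|^{s/2}|v_*|^{s/2}-\kappa\varPhi(v)$ on $|v_*|\le|v|$, as required. The main obstacle is controlling the $O(\epsilon^2)$ remainder in the expansion of $U_0\varPhi$ uniformly in $v$ on the unit sphere: one must verify that $\Psi(w)(\ln\Psi(w))^2$ remains integrable in $\sigma$ even as $v^-\to 0$ (i.e.\ $\sigma\to\hat v$), which does hold since this integrand vanishes like $|w|^2(\ln|w|)^2$ at the origin, but obtaining quantitative control requires tracking the expansion of $\varPhi$ at small arguments carefully.
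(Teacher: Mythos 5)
Your proposal is correct and follows essentially the same route as the paper's proof: Taylor expansion of $\varPhi=(\tilde B+\epsilon W)^{1+\epsilon}$ in $\epsilon$, cancellation of the leading term $\mathcal{W}(v;\tilde B)=0$ via the stationary second-moment equation for $A^\mathsf{T}$, negativity of the $\epsilon$-order term from \eqref{equ:jamesCondition}, continuity in $v_*$ to extend negativity to $|v_*|\leq\delta$, a crude bound on the annulus $\delta\leq|v_*|\leq|v|=1$, and homogeneity to unscale. Your additional care with the $O(\epsilon^2)$ remainder near $v^-\to 0$ is a point the paper passes over silently, but it does not change the argument.
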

    Note that we can always choose \( \epsilon \) sufficiently small such that \( s \in (2, 3) \).
    \begin{proof}
        Since \( \tilde{B} \) is positive definite, on sufficiently small \( \epsilon > 0 \) we have \( \min_{|v| = 1} \tilde{B}(v) + \epsilon W(v) > 0 \) and so \( \min_{|v| = 1} \varPhi(v) > 0 \). Using Taylor expansion on \( \varPhi(v) = (\tilde{B}(v) + \epsilon W(v))^{1 + \epsilon} \) gives \( \varPhi = \tilde{B} + \epsilon (\tilde{B} \ln \tilde{B} + W) + O(\epsilon^2) \) and \( { \nabla_{v} }\varPhi = { \nabla_{v} }\tilde{B} + \epsilon ((1 + \ln \tilde{B}) { \nabla_{v} }\tilde{B} + { \nabla_{v} }W) + O(\epsilon^2) \). Then with \( |v| = 1 \), it holds that
        \begin{align*}
            V\varPhi
            \coloneqq U\varPhi(v, 0)
            &= {\int_{\mathbb{S}^{d-1}}^{}} b(\sigma \cdot \frac{v}{\left|{v}\right|}) (\varPhi^+ + \varPhi^- - \varPhi) \, {\mathrm{d}{\sigma}} - A_\beta v \cdot { \nabla_{v} }\varPhi \\
            &= {\int_{\mathbb{S}^{d-1}}^{}} 
                b(\sigma \cdot \frac{v}{\left|{v}\right|}) (\tilde{B}^+ + \tilde{B}^- - \tilde{B})
             \, {\mathrm{d}{\sigma}} - A_\beta v \cdot { \nabla_{v} }\tilde{B} \\
            &\quad\;\,
            + \epsilon \bigg(
            {\int_{\mathbb{S}^{d-1}}^{}} 
                b(\sigma \cdot \frac{v}{\left|{v}\right|})
                (
                \tilde{B}^+ \ln \tilde{B}^+ + \tilde{B}^- \ln \tilde{B}^- - \tilde{B} \ln \tilde{B}
                )
             \, {\mathrm{d}{\sigma}}
            - (1 + \ln \tilde{B}) A_\beta v \cdot { \nabla_{v} }\tilde{B} \\
            &\quad\;\,\quad\;\,
            + {\int_{\mathbb{S}^{d-1}}^{}} 
                b(\sigma \cdot \frac{v}{\left|{v}\right|})
                (
                W^+ + W^- - W
                )
             \, {\mathrm{d}{\sigma}}
            - A_\beta v \cdot { \nabla_{v} }W
            \bigg)
            + O(\epsilon^2) \\
            &= \mathcal{W}(v; \tilde{B}) + \epsilon (\mathcal{H}(v) + \mathcal{W}(v; W)) + O(\epsilon^2).
        \end{align*}
        Since \( \tilde{B} \) is a stationary solution to \eqref{equ:inelasticBobylevSelfSim} on shear \( A^\mathsf{T} \),
        \begin{align*}
            &\quad\;\,
            \mathcal{W}(v; \tilde{B}) \\
            &= {\int_{\mathbb{S}^{d-1}}^{}} 
                b(\sigma \cdot \frac{v}{\left|{v}\right|}) (\tilde{B}^+ + \tilde{B}^- - \tilde{B})
             \, {\mathrm{d}{\sigma}} - A_\beta v \cdot { \nabla_{v} }\tilde{B} \\
            &= \tilde{B} : {\int_{\mathbb{S}^{d-1}}^{}} 
                b(\sigma \cdot \frac{v}{\left|{v}\right|}) ({{(v^+)}^{\otimes 2}} + {{(v^-)}^{\otimes 2}} - {{v}^{\otimes 2}})
             \, {\mathrm{d}{\sigma}} - A_\beta v \cdot { \nabla_{v} }\tilde{B} \\
            &= \tilde{B} : \frac{1}{2}{\int_{\mathbb{S}^{d-1}}^{}} 
                b(\sigma \cdot \frac{v}{\left|{v}\right|}) (
                z (z - 2) {{v}^{\otimes 2}} + z^2 |v|^2 {{\sigma}^{\otimes 2}} + z (1 - z) |v| (v \otimes \sigma + \sigma \otimes v)
                )
             \, {\mathrm{d}{\sigma}} - A_\beta v \cdot (\tilde{B} + \tilde{B}^\mathsf{T})v \\
            &= \tilde{B} : \frac{1}{2} (
            z^2 d c_{11} (\frac{|v|^2}{d} I - {{v}^{\otimes 2}}) + 2 \zeta {{v}^{\otimes 2}}
            )
            - (A_\beta^\mathsf{T} \tilde{B} + \tilde{B} A_\beta) : {{v}^{\otimes 2}} \\
            &= -{{v}^{\otimes 2}} : (
            A^\mathsf{T} \tilde{B} + \tilde{B} A + (2 \beta + \zeta) \tilde{B} + z^2 \frac{d c_{11}}{2} (\tilde{B} - \frac{\mathrm{tr}(\tilde{B})}{d} I)
            )
            = 0
        \end{align*}
        with \( X : Y = \sum_{ij} X_{ij} Y_{ij} \). Furthermore, by {assumption \eqref{equ:jamesCondition}, taking} \( 0 < \epsilon \ll 1 \) sufficiently small, we have that \( \max_{|v| = 1} V\varPhi < 0 \).

        By the continuity of the kernel and \( \varPhi \), \( U\varPhi \) is continuous in \( v_* \), so on \( |v_*| \leq \delta \ll 1 \), we have \( \max_{|v| = 1} U\varPhi < 0 \) as well. Since \( \max_{|v| = 1} \varPhi > 0 \), we also have \( U\varPhi \leq -\kappa \varPhi \) for some \( \kappa > 0 \) sufficiently small.

        On \( \delta \leq |v_*| \leq |v| = 1 \), as \( |v'|, |v_*'| \leq |v| + |v_*| \leq 2 \), \( |\varPhi(v') + \varPhi(v_*') - \varPhi(v) - \varPhi(v_*)| \leq C \). Also, \( |{ \nabla_{v} }\varPhi| \leq (\tilde{B} + \epsilon W)^\epsilon (|{ \nabla_{v} }\tilde{B}| + \epsilon |{ \nabla_{v} }W|) \leq C  \) on \( |v| = 1 \). These imply that
        \begin{equation*}
            U\varPhi
            \leq {\int_{\mathbb{S}^{d-1}}^{}} 
                b\big(\sigma \cdot \frac{v - v_*}{|v - v_*|}\big)
                |\varPhi(v') + \varPhi(v_*') - \varPhi(v) - \varPhi(v_*)|
             \, {\mathrm{d}{\sigma}} + \|A_\beta\| |v| |{ \nabla_{v} }\varPhi|
            \leq C\delta^{s / 2}.
        \end{equation*}

        Combining the two cases and using the homogeneity of \( \varPhi \), we have that \( U\varPhi \leq C |v|^{s / 2} |v_*|^{s / 2} - \kappa \varPhi \) for some \( C, \kappa > 0 \) on \( |v_*| \leq |v| \). Hence, \( \varPhi \leq C|v|^s \) follows directly from the homogeneity of \( \varPhi \).
    \end{proof}

%

     {There are few results on the existence of matrices satisfying \eqref{equ:jamesCondition}. However, we note that,} assuming in Lemma \ref{thm:jamesPovznerIneq}, such \( \tilde{B} \) exists, then
    \begin{itemize}
        \item
        \( \mathcal{W}(v; W) \) is linear in \( W \)

        \item
        as \( |v^\pm| \leq C |v| \), \( \tilde{B}, \tilde{B}^\pm \leq C \). Since \( x \mapsto x \ln x \) is locally bounded, the integrand in \( \mathcal{H} \) is bounded. Furthermore, \( \tilde{B} \) is positive definite, so \( \min_{|v| = 1} \ln\tilde{B}(v) > 0 \). This implies that \( \max_{|v| = 1} \mathcal{H} < \infty \).
    \end{itemize}
    Hence, it would suffice to find \( W \) such that \( \max_{|v| = 1} \mathcal{W}(v; W) < 0 \), with \( \lambda > 0 \) being sufficiently large,
    \begin{equation*}
        \max_{|v| = 1} \mathcal{H}(v) + \mathcal{W}(v; \lambda W) \leq \max_{|v| = 1} \mathcal{H}(v) + \lambda \max_{|v| = 1}\mathcal{W}(v; W) < 0,
    \end{equation*}
    and thus \eqref{equ:jamesCondition} holds. As seen in the proof of Theorem \ref{thm:jamesPovznerIneq}, one has
    \begin{equation*}
        \mathcal{W}(v; W) = -{{v}^{\otimes 2}} : \big(
        A^\mathsf{T} W + W A + (2 \beta + \zeta) W
        + z^2 \frac{d c_{11}}{2} (W - \frac{\mathrm{tr}(W)}{d} I)
        \big).
    \end{equation*}
    Then \( \max_{|v| = 1} \mathcal{W}(v; W) < 0 \) is equivalent to
    \begin{equation} \label{equ:jamesConditionWeakForm}
        A^\mathsf{T} W + W A + (2 \beta + \zeta) W
        + z^2 \frac{d c_{11}}{2} (W - \frac{\mathrm{tr}(W)}{d} I)
        \succ 0,
    \end{equation}
    with \( \succ \) defined in the sense that $A\succ 0$ if and only if $v^\mathsf{T} A v>0$ for all $v\in\mathbb{R}^d$.

    So to use Theorem \ref{thm:jamesPovznerIneq}, it suffices to show that such \( W \) exists. For consistency with notations in \eqref{equ:usfParaRelation}, on \( \tilde{\beta} = \beta + \zeta / 2 \) and \( \tilde{c} = z^2 \frac{d c_{11}}{2} \), we rewrite \eqref{equ:jamesConditionWeakForm} as
    \begin{equation} \label{equ:jamesConditionWeakFormSimplified}
        R(W) = A_{\tilde{\beta}}^\mathsf{T} W + W A_{\tilde{\beta}} + \tilde{c} (W - \frac{\mathrm{tr}(W)}{d} I) \succ 0.
    \end{equation}

    In what follows we would point out that, while the approach above may yield desired results on some shearing matrices, it fails in case of the 2D USF.

    \begin{theorem}
        If \( d = 2 \) and \eqref{equ:usfParaRelation} holds, then no \( W \in \mathbb{R}^{2 \times 2} \) satisfies \eqref{equ:jamesConditionWeakFormSimplified} for uniform shear flow where \( A = \alpha E_{12} \).
    \end{theorem}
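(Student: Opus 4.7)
The plan is to argue by duality, producing a positive definite matrix in the kernel of the adjoint of \( R \) that serves as a direct obstruction to \eqref{equ:jamesConditionWeakFormSimplified}. Work with the Frobenius inner product \( \langle X, Y \rangle = \mathrm{tr}(X^\mathsf{T} Y) \). After expanding \( A_{\tilde\beta} = A + \tilde\beta I \) in \eqref{equ:jamesConditionWeakFormSimplified}, I would compute the adjoint \( R^* \) by cyclicity of the trace together with the identity \( \mathrm{tr}(W)\mathrm{tr}(Y) = \langle W, \mathrm{tr}(Y) I \rangle \). The outcome is
\[
R^*(Y) = A Y + Y A^\mathsf{T} + (2\tilde\beta + \tilde c) Y - \frac{\tilde c}{d} \mathrm{tr}(Y) I,
\]
which I would recognize as precisely the operator whose vanishing is the stationary second moment equation \eqref{equ:usfSecondMomentEquWithSelfSim} for the original shearing matrix \( A = \alpha E_{12} \) (rather than \( A^\mathsf{T} \)).

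The next step is to import from the proof of Theorem \ref{thm.ss.comsign} an explicit element of \( \ker R^* \) that is symmetric positive definite. Under the hypothesis \eqref{equ:usfParaRelation} with \( \tilde\beta = \gamma \), the matrix \( B \) given by \eqref{equ:usfSecondMomentRealSol} has the two eigenvalues \( \Lambda_1, \Lambda_2 > 0 \) listed there, so it is symmetric positive definite, and by construction \( R^*(B) = 0 \).

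The contradiction is then immediate. Suppose \( W \in \mathbb{R}^{2 \times 2} \) satisfies \eqref{equ:jamesConditionWeakFormSimplified}, so that the symmetric part \( S \) of \( R(W) \) is positive definite. Since \( B \) is symmetric, the antisymmetric part of \( R(W) \) pairs to zero against \( B \), giving
\[
\mathrm{tr}(S B) = \langle R(W), B \rangle = \langle W, R^*(B) \rangle = 0.
\]
On the other hand, diagonalizing \( B = \sum_i \Lambda_i u_i u_i^\mathsf{T} \) with \( \Lambda_i > 0 \) forces \( \mathrm{tr}(S B) = \sum_i \Lambda_i u_i^\mathsf{T} S u_i > 0 \) since \( S \succ 0 \), a contradiction.

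The entire argument hinges on identifying \( R^* \) with the untransposed second moment operator, after which the trace-pairing obstruction is automatic, so I expect no real obstacle. I note that the sketch never uses \( d = 2 \) beyond having an explicit positive definite stationary solution from \eqref{equ:usfSecondMomentRealSol}, so the same reasoning in fact shows that the Povzner-type inequality \eqref{equ:jamesConditionWeakFormSimplified} fails for uniform shear flow in every dimension.
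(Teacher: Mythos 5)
Your argument is correct, but it takes a genuinely different route from the paper. The paper's proof is a direct $2\times 2$ computation: it parametrizes $x\in S^1$, reduces the condition $R(W)\succ 0$ to $C>0$ and $\vec{W}^\mathsf{T} S\vec{W}>0$ for an explicit $4\times 4$ matrix $S$ in the entries of $W$, and then verifies by enumerating all fifteen principal minors (using \eqref{equ:usfParaRelation} and $\tilde{\beta}=\gamma\geq 0$) that $-S$ is positive semidefinite, so no such $\vec W$ exists; this is elementary but brute-force and tied to $d=2$. You instead compute the Frobenius adjoint $R^*(Y)=AY+YA^\mathsf{T}+(2\tilde{\beta}+\tilde{c})Y-\frac{\tilde{c}}{d}\mathrm{tr}(Y)I$, which is exactly the stationary second-moment operator \eqref{stableeq}/\eqref{equ:usfSecondMomentEquWithSelfSim} for the untransposed $A$, and use the positive definite matrix $B$ of \eqref{equ:usfSecondMomentRealSol} in $\ker R^*$ as a trace-pairing obstruction: $\mathrm{tr}(SB)=\langle R(W),B\rangle=\langle W,R^*(B)\rangle=0$ while $\mathrm{tr}(SB)>0$ if the symmetric part $S$ of $R(W)$ were positive definite. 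I checked the adjoint computation and the handling of the antisymmetric part; both are right, and your use of the hypothesis \eqref{equ:usfParaRelation} through $\tilde{\beta}=\gamma\geq 0$ (so that \eqref{equ:usfSecondMomentRealSol} with positive trace is indeed positive definite) is the same implicit selection the paper makes when signing its minors. What your approach buys is conceptual clarity and generality: it reuses the second-moment analysis of Theorem \ref{thm.ss.comsign}, avoids the minor enumeration entirely, and, as you note, applies verbatim in every dimension $d\geq 2$ --- indeed to any shearing matrix for which the second-moment equation for $A$ itself (at the chosen $\tilde{\beta}$) has a symmetric positive definite solution --- thereby strengthening the paper's statement and sharpening its remark that the $W$-only strategy for verifying \eqref{equ:jamesConditionWeakFormSimplified} cannot work for USF.
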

    \begin{proof}
        On \( x = (\cos\theta, \sin\theta)^\mathsf{T} \in S^1 \) where $S^1$ denotes the one dimensional sphere,
        \begin{equation*}
            x^\mathsf{T} R x = C + C_1 \cos(2 \theta) + C_2 \sin(2 \theta)
        \end{equation*}
        has minimum value \( \min_{x \in S^1} x^\mathsf{T} R x = C - \sqrt{C_1^2 + C_2^2} \) with
        \begin{align*}
            C &= (\tilde{\beta}, \frac{\alpha}{2}, \frac{\alpha}{2}, \tilde{\beta}) \cdot \vec{W}, \\
            C_1 &= (\tilde{\beta} + \frac{\tilde{c}}{2}, -\frac{\alpha}{2}, -\frac{\alpha}{2}, \tilde{\beta} + \frac{\tilde{c}}{2}) \cdot \vec{W}, \\
            C_2 &= (\alpha, \tilde{\beta} + \frac{\tilde{c}}{2}, \tilde{\beta} + \frac{\tilde{c}}{2}, 0) \cdot \vec{W},
        \end{align*}
        and \( \vec{W} = (W_{11}, W_{12}, W_{21}, W_{22})^\mathsf{T} \). To have \( \min_{x \in S^1} x^\mathsf{T} R x > 0 \), it is equivalent to require
        \begin{align*}
            C &> 0, \\
            C^2 - C_1^2 - C_2^2 &= \vec{W}^\mathsf{T} S \vec{W} > 0,
        \end{align*}
        to hold for some \( \vec{W} \) with

        \begin{equation*}
            S = \begin{pmatrix}
                -\alpha^2 - \frac{1}{4} \tilde{c} (4 \tilde{\beta} + \tilde{c})         & -\frac{1}{4} \alpha \tilde{c}                & -\frac{1}{4} \alpha \tilde{c}                & \frac{1}{4} \tilde{c} (4 \tilde{\beta} + \tilde{c})  \\
                -\frac{1}{4} \alpha \tilde{c}                                           & -\frac{1}{4} (2 \tilde{\beta} + \tilde{c})^2 & -\frac{1}{4} (2 \tilde{\beta} + \tilde{c})^2 & -\frac{1}{4} \alpha \tilde{c}                        \\
                -\frac{1}{4} \alpha \tilde{c}                                           & -\frac{1}{4} (2 \tilde{\beta} + \tilde{c})^2 & -\frac{1}{4} (2 \tilde{\beta} + \tilde{c})^2 & -\frac{1}{4} \alpha \tilde{c}                        \\
                2 \tilde{\beta}^2 + \frac{1}{4} \tilde{c} (4 \tilde{\beta} + \tilde{c}) & -\frac{1}{4} \alpha \tilde{c}                & -\frac{1}{4} \alpha \tilde{c}                & -\frac{1}{4} \tilde{c} (4 \tilde{\beta} + \tilde{c})
            \end{pmatrix}
        \end{equation*}
        and such \( \vec{W} \) exists if and only if \( -S \) is not positive semidefinite. However, we can enumerate all \( 15 \) principal minors of \( -S \) and note that
        \begin{align*}
            &
            M_1 = \alpha^2 + \frac{1}{4} \tilde{c} (4 \tilde{\beta} + \tilde{c})
            ,\quad
            M_2 = M_3 = \frac{1}{4} (2 \tilde{\beta} + \tilde{c})^2
            ,\quad
            M_4 = \frac{1}{4} \tilde{c} (4 \tilde{\beta} + \tilde{c}), \\
            &
            M_{12} = M_{13} = \frac{1}{16 \tilde{c}} (4 \tilde{\beta} + \tilde{c})^2 (2 \tilde{\beta} + \tilde{c})^3
            ,\quad
            M_{14} = \frac{1}{2} \tilde{\beta} (2 \tilde{\beta} + \tilde{c})^3
            ,\quad
            M_{23} = 0
            ,\quad
            M_{24} = M_{34} = \frac{1}{16} \tilde{c} (2 \tilde{\beta} + \tilde{c})^3, \\
            &
            M_{123} = M_{124} = M_{134} = M_{234} = 0
        \end{align*}
        are all nonnegative with $\det(-S) = 0$. Then \( -S \) is positive semidefinite, no such \( \vec{W} \) exists.
    \end{proof}

    This implies that, in order to have \eqref{equ:jamesCondition} on 2D USF, it is necessary to study the behavior of the more complicated part \( \mathcal{H} \) of the condition. If \( \mathcal{H}(v) \geq 0 \) on \( |v| = 1 \), \eqref{equ:jamesCondition} fails to hold.
    
 \medskip
{In the current section, following the same strategy as in \cite{JamesEtAl17SSP}, we have constructed the global in time solutions as nonnegative measures for the Cauchy problem \eqref{equ:inelasticBoltzmann} in Theorem \ref{thm:existenceOfMildSol}, and further obtained the propagation of moment bounds in Theorem \ref{thm:mildSolAsWeakSolAndProperties}. Moreover, we have provided a characterization of the large time behavior of temperature in case of the simple uniform  shear flow in Theorem \ref{thm.ss.comsign} and established the existence of stationary self-similar Radon profile in Theorem \ref{thm:existenceOfStatProfileWithSmallness}. In the next section, we will follow the approach in \cite{BobylevEtAl20SSA} for further obtaining the large time asymptotics toward the self-similar profile for global in time solutions of \eqref{equ:inelasticBoltzmann}.}

    \section{Fourier solution} \label{sec:fourierSol}
    In this part we consider the solutions in the Fourier space as characteristic functions.
    The natural solution space is the space \( \mathcal{K} \) of characteristic functions, the Fourier transform of probability measures on \( \mathbb{R}^d \). For two characteristic functions \( \varphi, \psi \in \mathcal{K} \), their \( p \)-Toscani distance for \( p > 0 \) is
    \begin{equation}
        \label{def.pTd}
        \|\varphi - \psi\|_{p} = \sup_{k \neq 0} \frac{|\varphi(k) - \psi(k)|}{|k|^p}.
    \end{equation}
    The Cannone--Karch space \( \mathcal{K}^{p} \subseteq \mathcal{K} \) {with} \( p \in (0, 2] \), the space of characteristic functions with finite \( p \)-Toscani distance with \( 1 \), that is \( \|\varphi - 1\|_{p} < \infty \),
    contains all characteristic functions of probability measures with finite absolute \( p \)-moment and (if \( p > 1 \)) zero mean \cite{CannoneKarch09IES}.

    Similar to the elastic case in \cite{BobylevEtAl20SSA}, we have the following lemma for properties of the operator $\widehat{Q^+_e}$.
    
    \begin{lemma}
        Let $\widehat{Q^+_e}$ be defined in \eqref{DefhatQ}, then \( b_0^{-1} \widehat{Q_e^+}(\mathcal{K},\mathcal{K}) \subseteq \mathcal{K} \), and the \( \mathcal{L}_e \)-Lipschitz property of \( \widehat{Q_e^+} \) holds:
        \begin{align}\label{LeLip}
            |\widehat{Q^+_e}(\varphi,\varphi)(k) - \widehat{Q_e^+}(\psi,\psi)(k)|\leq \mathcal{L}_e|\varphi(k) - \psi(k)|\leq 2 b_0 \|\varphi - \psi\|_{{\mathit{L}^{\infty}}},
        \end{align}
        {where $\mathcal{L}_e$ is defined in \eqref{DefLe}.}
    \end{lemma}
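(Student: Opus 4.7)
The claim splits into two parts: the inclusion $b_0^{-1}\widehat{Q_e^+}(\mathcal{K},\mathcal{K})\subseteq \mathcal{K}$ and the Lipschitz bound. For the inclusion, my plan is to invert the Fourier transform: if $\varphi \in \mathcal{K}$, then $\varphi = \mathcal{F} f$ for some probability measure $f\in\mathcal{M}_+(\mathbb{R}^d)$ with $\|f\|_{\mathcal{M}}=1$, and by the very definition of the Fourier-space operator one has $\widehat{Q_e^+}(\varphi,\varphi) = \mathcal{F}(Q_e^+(f,f))$. Since $Q_e^+(f,f)$ is a nonnegative Radon measure (evident from its strong form \eqref{DefQe+}), and the mass identity \eqref{boundQ} from Lemma \ref{lem:propertyOfGainPartAndLinearPartSolver} gives $\|Q_e^+(f,f)\|_{\mathcal{M}} = b_0$, the measure $b_0^{-1} Q_e^+(f,f)$ is a probability measure. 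Its Fourier transform $b_0^{-1}\widehat{Q_e^+}(\varphi,\varphi)$ is therefore a characteristic function, which is exactly what the inclusion asserts.

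For the Lipschitz bound, my plan is to exploit the bilinearity of $\widehat{Q_e^+}$. Writing the pointwise difference of the integrands as
\begin{equation*}
    \varphi(k^+)\varphi(k^-) - \psi(k^+)\psi(k^-)
    = (\varphi(k^+) - \psi(k^+))\varphi(k^-) + \psi(k^+)(\varphi(k^-) - \psi(k^-)),
\end{equation*}
and using the elementary fact $|\varphi|, |\psi|\leq 1$ that holds for any characteristic function, each factor that is not a difference is bounded by $1$, so the integrand is majorised by $|\varphi(k^+)-\psi(k^+)| + |\varphi(k^-)-\psi(k^-)|$. Integrating against $b(\cos\vartheta)\,\mathrm{d}\sigma$ then gives the first inequality by direct comparison with the definition of $\mathcal{L}_e$ applied to $|\varphi - \psi|$.

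The second inequality follows immediately by replacing $|\varphi(k^\pm) - \psi(k^\pm)|$ with its supremum $\|\varphi - \psi\|_{L^\infty}$ and using the normalisation $\int_{\mathbb{S}^{d-1}} b(\cos\vartheta)\,\mathrm{d}\sigma = b_0$, which produces the factor $2b_0$. No real obstacle is expected: the only subtle point is the interpretation $\mathcal{L}_e|\varphi(k)-\psi(k)|$ as the operator $\mathcal{L}_e$ applied to the (bounded) function $|\varphi-\psi|$ and then evaluated at $k$, so that the middle quantity in \eqref{LeLip} is genuinely a well-defined nonnegative number dominating the complex-valued left-hand side.
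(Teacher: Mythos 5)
Your Lipschitz argument is essentially the paper's: the same bilinear splitting of $\varphi(k^+)\varphi(k^-)-\psi(k^+)\psi(k^-)$ (the paper pairs $\varphi(k^+)$ with the $k^-$-difference and $\psi(k^-)$ with the $k^+$-difference, you pair them the other way, which is immaterial), the bound $|\varphi|,|\psi|\le 1$ valid for characteristic functions, identification of the result with $\mathcal{L}_e|\varphi-\psi|$, and the normalisation $\int_{\mathbb{S}^{d-1}} b(\cos\vartheta)\,\mathrm{d}\sigma=b_0$ producing the factor $2b_0$. Where you genuinely differ is the inclusion $b_0^{-1}\widehat{Q_e^+}(\mathcal{K},\mathcal{K})\subseteq\mathcal{K}$: the paper does not prove it, deferring to \cite{BobylevEtAl20SSA,BobylevEtAl09SSA}, whereas you give a physical-space argument — invert the Fourier transform, use nonnegativity of $Q_e^+(f,f)$ from its strong form \eqref{DefQe+} together with the mass identity \eqref{boundQ} of Lemma \ref{lem:propertyOfGainPartAndLinearPartSolver} to see that $b_0^{-1}Q_e^+(f,f)$ is a probability measure, and invoke the Bobylev identity $\widehat{Q_e^+}(\varphi,\varphi)=\mathcal{F}\big(Q_e^+(f,f)\big)$ underlying \eqref{DefhatQ}. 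This is sound and has the advantage of being self-contained within the paper's own estimates. One small caveat: as written it only treats the diagonal case $\widehat{Q_e^+}(\varphi,\varphi)$, while the stated inclusion is for the bilinear operator with possibly different arguments $\varphi,\psi\in\mathcal{K}$; to cover that you would either introduce the bilinear physical gain operator $Q_e^+(f,g)$ (not defined in the paper, though the same computation gives it mass $b_0$) or argue directly on the Fourier side that $b_0^{-1}\widehat{Q_e^+}(\varphi,\psi)$ is the characteristic function of the post-collisional velocity $v'$ when $v\sim f$, $v_*\sim g$ are independent and $\sigma$ is drawn with density $b/b_0$, which is the route of the cited references.
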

    
    \begin{proof}
        The proof is very similar to \cite[Lem.~3.1]{BobylevEtAl20SSA}, see also \cite{BobylevEtAl09SSA}{. For} brevity, we only prove the \( \mathcal{L}_e \)-Lipschitz property as follows:
        \begin{align*}
            |\widehat{Q^+_e}(\varphi,\varphi) - \widehat{Q_e^+}(\psi,\psi)|
            &\leq {\int_{\mathbb{S}^{d-1}}^{}} 
                b(\cos\vartheta) |\varphi(k^+) \varphi(k^-) - \psi(k^+) \psi(k^-)|
             \, {\mathrm{d}{\sigma}} \\
            &\leq {\int_{\mathbb{S}^{d-1}}^{}} b(\cos\vartheta) |\varphi(k^+)||\varphi(k^-) - \psi(k^-)| \, {\mathrm{d}{\sigma}}
            + {\int_{\mathbb{S}^{d-1}}^{}} b(\cos\vartheta) |\varphi(k^+) - \psi(k^+)||\varphi(k^-)| \, {\mathrm{d}{\sigma}} \\
            &\leq {\int_{\mathbb{S}^{d-1}}^{}} b(\cos\vartheta)  |\varphi(k^-) - \psi(k^-)| \, {\mathrm{d}{\sigma}}
            + {\int_{\mathbb{S}^{d-1}}^{}} b(\cos\vartheta) |\varphi(k^+) - \psi(k^+)| \, {\mathrm{d}{\sigma}} \\
            &=\mathcal{L}_e|\varphi - \psi|
            \leq 2 b_0 \|\varphi - \psi\|_{{\mathit{L}^{\infty}}},
        \end{align*}
        for \( \varphi, \psi \in \mathcal{K} \). {In the third inequality above, we use the fact that $|\phi(k)| \leq 1$ for any $k$.}
    \end{proof}
    
    Then we integrate the equation \eqref{equ:inelasticBobylev} to get the mild form of the solution defined as follows.
    
    \begin{definition}
        A function \( \varphi(t,k) \in \mathit{C}([0, \infty), \mathcal{K}) \) is a mild solution of \eqref{equ:inelasticBobylev} on initial condition \( \varphi_0(k) = \varphi(0,k) \in \mathcal{K} \) if
        \begin{equation} \label{equ:inelasticBobylev_mildForm}
            \varphi(t,k)
            = e^{-b_0 t} \varphi_0(e^{-t A^\mathsf{T}} k)
            + {\int_{0}^{t}} e^{-b_0 (t - \tau)} \widehat{Q_e^+}(\varphi,\varphi)(\tau, e^{-(t - \tau) A^\mathsf{T}} k) \, {\mathrm{d}{\tau}}.
        \end{equation}
    \end{definition}
    
    We can prove the existence of Cauchy problem solution of \eqref{equ:inelasticBobylev} in terms of mild solution of \eqref{equ:inelasticBobylev} in the frequency space.
    
    \begin{theorem}
        If \( \varphi_0(k) \in \mathcal{K} \), then there exists a unique mild solution \( \varphi(t,k) \in \mathit{C}([0, \infty), \mathcal{K}) \) {globally in time and} satisfying \eqref{equ:inelasticBobylev_mildForm} {with} initial condition \( \varphi(t,k)=\varphi_0(k) \).
    \end{theorem}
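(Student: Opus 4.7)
My plan is to run a Picard iteration on the mild formulation \eqref{equ:inelasticBobylev_mildForm}, closely following the treatment of the elastic case in \cite{BobylevEtAl20SSA}. Fix $T > 0$ and set $X_T := \mathit{C}([0,T], \mathcal{K})$, regarded as a closed subset of $\mathit{C}([0,T], L^{\infty}(\mathbb{R}^d))$ with the uniform norm $\|\varphi\|_{X_T} = \sup_{[0,T]\times \mathbb{R}^d}|\varphi|$; note $\|\cdot\|_{X_T} \leq 1$ on $X_T$ since characteristic functions are bounded by $1$. Define the Picard map $P$ by the right-hand side of \eqref{equ:inelasticBobylev_mildForm}.

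The key step is to verify the invariance $P(X_T) \subseteq X_T$, i.e., that $P\varphi(t,\cdot) \in \mathcal{K}$ for every $t \in [0,T]$. I rewrite
\begin{equation*}
P\varphi(t,k) = e^{-b_0 t}\,\varphi_0(e^{-tA^\mathsf{T}}k) + \int_{0}^{t} b_0 e^{-b_0(t-\tau)} \Bigl[b_0^{-1}\widehat{Q_e^+}(\varphi,\varphi)(\tau, e^{-(t-\tau)A^\mathsf{T}}k)\Bigr]\,\mathrm{d}\tau,
\end{equation*}
so that the nonnegative weights $e^{-b_0 t}$ and $b_0 e^{-b_0(t-\tau)}\,\mathrm{d}\tau$ integrate to $1$. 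The function $\varphi_0(e^{-tA^\mathsf{T}}\cdot)$ is the characteristic function of the pushforward of the probability measure associated with $\varphi_0$ under the invertible linear map $v \mapsto e^{-tA}v$, and by the same pushforward argument together with the preservation property $b_0^{-1}\widehat{Q_e^+}(\mathcal{K},\mathcal{K}) \subseteq \mathcal{K}$, the bracketed term also belongs to $\mathcal{K}$ in $k$. Thus $P\varphi(t,\cdot)$ is a convex (continuous) mixture of characteristic functions, hence itself lies in $\mathcal{K}$; continuity in $t$ is clear from the explicit formula.

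Contraction then follows directly from the $\mathcal{L}_e$-Lipschitz estimate \eqref{LeLip}: for $\varphi,\psi \in X_T$,
\begin{equation*}
|P\varphi(t,k) - P\psi(t,k)| \leq \int_{0}^{t} e^{-b_0(t-\tau)}\,\mathcal{L}_e|\varphi(\tau,\cdot)-\psi(\tau,\cdot)|(e^{-(t-\tau)A^\mathsf{T}}k)\,\mathrm{d}\tau \leq 2 b_0 T\,\|\varphi-\psi\|_{X_T},
\end{equation*}
so choosing $T_0 < 1/(4 b_0)$ makes $P$ a strict contraction on $X_{T_0}$, and Banach's fixed point theorem yields a unique mild solution on $[0,T_0]$. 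Since $\varphi(T_0,\cdot) \in \mathcal{K}$ and the contraction time $T_0$ depends only on $b_0$ (not on the datum), iterating the construction on $[nT_0,(n+1)T_0]$ extends $\varphi$ to $\mathit{C}([0,\infty),\mathcal{K})$; uniqueness propagates identically, or can alternatively be obtained in one shot by a Gronwall argument on $\sup_k|\varphi_1(t,k)-\varphi_2(t,k)|$ for two putative solutions. The only nonroutine point is the $\mathcal{K}$-invariance, which is why the $e^{-b_0 t}$ weight in \eqref{equ:inelasticBobylev_mildForm} is essential: it is precisely what exhibits $P\varphi$ as a convex combination of characteristic functions.
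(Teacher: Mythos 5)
Your proposal is correct and follows essentially the same route as the paper: a Picard map on $\mathit{C}([0,T],\mathcal{K})$ with the uniform norm, invariance of $\mathcal{K}$ via the convex-mixture structure of the mild form together with $b_0^{-1}\widehat{Q_e^+}(\mathcal{K},\mathcal{K})\subseteq\mathcal{K}$, contraction from the $\mathcal{L}_e$-Lipschitz bound with $T$ of order $1/(4b_0)$ independent of the datum, and iteration to extend globally. Your explicit verification that the weights $e^{-b_0 t}$ and $b_0 e^{-b_0(t-\tau)}\,\mathrm{d}\tau$ integrate to one simply makes precise the "convex combination" step the paper states without detail.
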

    
    \begin{proof}
    Consider the Picard map \( P : \mathcal{X}_T \to \mathcal{X}_T \) on \( \mathcal{X}_T \coloneqq \mathit{C}([0, T], \mathcal{K}) \) equipped with metric \( d(\varphi, \psi) = \sup_{[0, T]} \|\varphi(t) - \psi(t)\|_{{\mathit{L}^{\infty}}} \)  defined by
        \begin{equation*}
            P\varphi(t,k)
            = e^{-b_0 t} \varphi_0(e^{-t A^\mathsf{T}} k) + {\int_{0}^{t}} 
                e^{-b_0(t - \tau)} \widehat{Q_e^+}(\varphi,\varphi)(\tau, e^{-(t - \tau) A^\mathsf{T}} k)
             \, {\mathrm{d}{\tau}}.
        \end{equation*}
        {We first show that $P$ maps $\mathcal{X}_T$ into itself.} For \( \varphi_0, \varphi(\tau) \in \mathcal{K} \), it is straight-forward to see that \( \varphi_0(e^{-t A^\mathsf{T}} k) \) and \( \widehat{Q_e^+}(\varphi,\varphi)(\tau, e^{-(t - \tau) A^\mathsf{T}} k) \) are characteristic functions. \( P\varphi(t) \), which is a convex combination of characteristic functions, is also a characteristic function for each \( t \in [0, T] \). Furthermore, {since \( \mathcal{K} \) is a subset of all continuous functions,} \( t \mapsto e^{-b_0 t} \varphi_0(e^{-t A^\mathsf{T}} k) \) and \( (t, \tau) \mapsto e^{-b_0(t - \tau)} \widehat{Q_e^+}\varphi(\tau, e^{-(t - \tau) A^\mathsf{T}} k) \) are continuous, then \( t \mapsto P\varphi(t) \) is continuous. These imply that \( P \) is well-defined.

        For \( \varphi, \psi \in \mathcal{X}_T \) and \( t \in [0, T] \), it holds that
        \begin{align*}
            |P\varphi(t, k) - P\psi(t, k)|
            &\leq {\int_{0}^{t}} 
                e^{-b_0 (t - \tau)} |\widehat{Q_e^+}(\varphi,\varphi)(\tau) - \widehat{Q_e^+}(\psi,\psi)(\tau)|(e^{-(t - \tau) A^\mathsf{T}} k)
             \, {\mathrm{d}{\tau}} \\
            &\leq {\int_{0}^{t}} 
                2 b_0 \|\varphi(\tau) - \psi(\tau)\|_{{\mathit{L}^{\infty}}}
             \, {\mathrm{d}{\tau}} \\
            &\leq 2 b_0 T{\, d(\varphi,\psi)}.
        \end{align*}
        We choose \( T = \frac{1}{4 b_0} \), which is independent of \( \varphi_0 \). Hence, \( P \) is a Banach contraction, there exists \( \varphi \in \mathcal{X}_T \) such that \( P\varphi = \varphi \). By a standard extension argument, there exists \( \varphi \in \mathit{C}([0, \infty), \mathcal{K}) \) such that \( P\varphi = \varphi \).
    \end{proof}
    We show that the distance of two mild solutions \( \varphi, \psi \) for the elastic equation on initial conditions \( \varphi_0, \psi_0 \in \mathcal{K} \) is controlled by a corresponding solution on {the linearized equation \eqref{equ:inelasticBobylev}}. The proof is similar to that in  \cite[Thm.~4.8]{BobylevEtAl20SSA}, noting that the key points are using the bound \( |\widehat{Q_e^+}(\varphi,\varphi) - \widehat{Q_e^+}(\psi,\psi)| \leq \mathcal{L}_e|\varphi - \psi| \) and {the} comparison principle.

    \begin{theorem} \label{thm:controlOfBobMildSolDiffViaLinearizedMildSol}
        Let \( \varphi(t,k), \psi(t,k) \) be two mild solutions of \eqref{equ:inelasticBobylev} {with} initial conditions \( \varphi_0(k), \psi_0(k) \in \mathcal{K} \). Then if \( y(t,k) \) satisfies
        \begin{equation} \label{equ:inelasticBobylev_linearizedMildSol}
            y(t,k)
            = e^{-b_0 t} y_0(e^{-t A^\mathsf{T}} k)
            + {\int_{0}^{t}} e^{-b_0 (t - \tau)} \mathcal{L}_e y(\tau, e^{-(t - \tau) A^\mathsf{T}} k) \, {\mathrm{d}{\tau}},
        \end{equation}
        which is the mild solution of the linearized equation
        \begin{equation*}
            {\partial_{t}} y + A^\mathsf{T} k \cdot { \nabla_{k} } y = \mathcal{L}_e y - b_0 y,
        \end{equation*}
        with initial condition \( y_0(k) = |\varphi_0(k) - \psi_0(k)| \), then it holds that
        \begin{align}\label{phi-psi}
            |\varphi(t,k) - \psi(t,k)| \leq y(t,k).
        \end{align}
    \end{theorem}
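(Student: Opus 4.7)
The plan is to establish \eqref{phi-psi} via a direct pointwise comparison: subtract the mild forms of $\varphi$ and $\psi$, apply the $\mathcal{L}_e$-Lipschitz estimate \eqref{LeLip} to recast the difference as a Volterra-type integral inequality, compare against the equation \eqref{equ:inelasticBobylev_linearizedMildSol} satisfied by $y$, and close by exploiting the positivity of the kernel $b$ (which makes $\mathcal{L}_e$ monotone on real-valued functions) together with a Gronwall argument.

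Concretely, setting $w(t,k) := |\varphi(t,k) - \psi(t,k)|$, the mild formula \eqref{equ:inelasticBobylev_mildForm} combined with \eqref{LeLip} yields
\begin{equation*}
w(t,k) \leq e^{-b_0 t}\,y_0(e^{-tA^{\mathsf T}}k) + \int_0^t e^{-b_0(t-\tau)}\,\mathcal{L}_e w(\tau,\,e^{-(t-\tau)A^{\mathsf T}}k)\,\mathrm{d}\tau,
\end{equation*}
while $y$ satisfies the same identity with equality by \eqref{equ:inelasticBobylev_linearizedMildSol}. Subtracting and setting $h := w - y$, which has $h(0,k) = 0$ by construction, I obtain
\begin{equation*}
h(t,k) \leq \int_0^t e^{-b_0(t-\tau)}\,\mathcal{L}_e h(\tau,\,e^{-(t-\tau)A^{\mathsf T}}k)\,\mathrm{d}\tau.
\end{equation*}
Splitting $h = h^+ - h^-$ and using that $\mathcal{L}_e$ has a nonnegative kernel, one has $\mathcal{L}_e h \leq \mathcal{L}_e h^+$ pointwise, so the right-hand side is dominated by $\int_0^t e^{-b_0(t-\tau)}\mathcal{L}_e h^+(\tau,\cdot)\,\mathrm{d}\tau$, which is itself nonnegative. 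Taking positive parts of both sides therefore upgrades the inequality to the same form but with $h^+$ on both sides. An application of $\|\mathcal{L}_e u\|_\infty \leq 2b_0\|u\|_\infty$ followed by taking the supremum in $k$ gives the scalar inequality
\begin{equation*}
\|h^+(t)\|_\infty \leq 2b_0 \int_0^t e^{-b_0(t-\tau)}\|h^+(\tau)\|_\infty\,\mathrm{d}\tau,
\end{equation*}
and because $\|h^+(0)\|_\infty = 0$ while $\|h^+(\cdot)\|_\infty$ is bounded on compact time intervals (since $w$ and $y$ are continuous and uniformly bounded), Gronwall's inequality forces $h^+ \equiv 0$, which is exactly \eqref{phi-psi}.

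The main subtle point is the handling of the shifted spatial argument $e^{-(t-\tau)A^{\mathsf T}}k$ in the Volterra integrand: for each fixed $\tau$, $k \mapsto e^{-(t-\tau)A^{\mathsf T}}k$ is a bijection of $\mathbb{R}^d$, so $\sup_k \mathcal{L}_e h^+(\tau, e^{-(t-\tau)A^{\mathsf T}}k) = \|\mathcal{L}_e h^+(\tau,\cdot)\|_\infty$ and the shift is absorbed by the supremum. Once this is recorded, the passage from the integral inequality for $h$ to the scalar Gronwall estimate is routine. An equivalent, perhaps more transparent, alternative is to run the same argument inductively on the Picard iterates used to build $\varphi$, $\psi$, and $y$, propagating $|\varphi^{(n)} - \psi^{(n)}| \leq y^{(n)}$ from step $n$ to step $n+1$ via \eqref{LeLip} and the monotonicity of $\mathcal{L}_e$, then passing to the limit.
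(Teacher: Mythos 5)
Your proposal is correct, and it reproduces the paper's first step verbatim: apply the \( \mathcal{L}_e \)-Lipschitz bound \eqref{LeLip} inside the mild formulation \eqref{equ:inelasticBobylev_mildForm} to get the Volterra-type inequality for \( w=|\varphi-\psi| \) with the same inhomogeneity \( e^{-b_0t}y_0(e^{-tA^\mathsf{T}}k) \) as in \eqref{equ:inelasticBobylev_linearizedMildSol}. Where you diverge is in how the inequality is closed. The paper simply invokes a comparison principle, whose actual proof appears afterwards (second item of Theorem \ref{thm:existenceAndPropOfLinearizedInelasticBob}): one runs the monotone Picard iteration defining \( y \) in the \( p \)-Toscani setting and propagates the ordering term by term, using only the positivity of \( \mathcal{L}_e \) — exactly the inductive alternative you sketch at the end. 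Your main route instead subtracts the equation for \( y \), passes to the positive part \( h^+=(w-y)^+ \) via monotonicity of \( \mathcal{L}_e \), and closes with a scalar Gronwall estimate in \( L^\infty \) using \( \|\mathcal{L}_e u\|_{\infty}\le 2b_0\|u\|_{\infty} \) and the fact that the shift \( k\mapsto e^{-(t-\tau)A^\mathsf{T}}k \) is a bijection. This is a legitimate, self-contained and arguably more elementary closure; it buys independence from the later comparison lemma, at the price of needing \( \sup_k h^+(t,\cdot) \) to be finite locally in time, which you assert ("\( w \) and \( y \) are continuous and uniformly bounded") rather than justify. That point is easily repaired — either note that \( y\ge 0 \) (nonnegativity of the linear semigroup on nonnegative data, as in the third item of Theorem \ref{thm:existenceAndPropOfLinearizedInelasticBob}), whence \( h^+\le w\le 2 \), or derive a local-in-time \( L^\infty \) bound for \( y \) from its Picard construction with bounded initial datum \( y_0=|\varphi_0-\psi_0|\le 2 \) — but as written it is the one step your argument leans on without proof, whereas the paper's iteration avoids it entirely.
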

    
    \begin{proof}
        We have from \eqref{equ:inelasticBobylev_mildForm} and \eqref{LeLip} that for \( z(t,k) = |\varphi(t,k) - \psi(t,k)| \),
        \begin{align*}
            z(t,k)
            &= |\varphi(t,k) - \psi(t,k)| \\
            &\leq e^{-b_0 t} |\varphi_0(e^{-t A^\mathsf{T}} k) - \psi_0(e^{-t A^\mathsf{T}} k)|
            + {\int_{0}^{t}} 
                e^{-b_0 (t - \tau)}
                |\widehat{Q_e^+}(\varphi,\varphi) - \widehat{Q_e^+}(\psi,\psi)(\tau, e^{-(t - \tau) A^\mathsf{T}} k)
                | \, {\mathrm{d}{\tau}} \\
            &\leq e^{-b_0 t}
            y_0(e^{-t A^\mathsf{T}} k)
            + {\int_{0}^{t}} 
                e^{-b_0 (t - \tau)}
                \mathcal{L}_e|\varphi - \psi|(\tau, e^{-(t - \tau) A^\mathsf{T}} k)
             \, {\mathrm{d}{\tau}} \\
            &= e^{-b_0 t} y_0(e^{-t A^\mathsf{T}} k) + {\int_{0}^{t}} 
                e^{-b_0 (t - \tau)}
                \mathcal{L}_e z(\tau, e^{-(t - \tau) A^\mathsf{T}} k)
             \, {\mathrm{d}{\tau}}.
        \end{align*}
        Therefore, \eqref{phi-psi} holds by comparison principle.
    \end{proof}

    In order to control the growth of {the} \( p\)-Toscani distance in time, we need to define the kernel constant
    \begin{equation} \label{Deflap}
        \begin{aligned}[b]
            \lambda_p
            &= {\int_{\mathbb{S}^{d-1}}^{}} 
                b\big(\sigma \cdot \frac{k}{|k|}\big) \big(
                1 - \frac{|k^+|^p + |k^-|^p}{|k|^p}
                \big)
             \, {\mathrm{d}{\sigma}} \\
            &= {\int_{\mathbb{S}^{d-1}}^{}} 
                b(\cos\vartheta) \Big(
                1 - (z^2 \sin^2\frac{\vartheta}{2})^{p / 2} - (1 - z (2 - z) \sin^2\frac{\vartheta}{2})^{p / 2}
                \Big)
             \, {\mathrm{d}{\sigma}},
        \end{aligned}
    \end{equation}
    for \( p \geq 0 \) as in \cite{BobylevGamba06BEM, BobylevEtAl20SSA}. Note that \( \lambda_0 = -b_0 < 0 \), \( \lambda_2 = \zeta \geq 0 \),
    and \( \lambda_p \leq b_0 \) for all \( p \). Furthermore, a direct calculation shows that
    \begin{equation}\label{monola}
        \frac{\partial \lambda_p}{{\partial p}}
        = {\int_{\mathbb{S}^{d-1}}^{}} 
            b(\cos\vartheta) (
            -(z^2 \sin^2\frac{\vartheta}{2})^{p / 2} \ln(z^2 \sin^2\frac{\vartheta}{2})
            - (1 - z (2 - z) \sin^2\frac{\vartheta}{2})^{p / 2} \ln(1 - z (2 - z) \sin^2\frac{\vartheta}{2})
            )
         \, {\mathrm{d}{\sigma}}
        > 0
    \end{equation}
    by the facts that \( z^2 \sin^2\frac{\vartheta}{2} \leq 1 \) and \( 1 - z (2 - z) \sin^2\frac{\vartheta}{2} \leq 1 \), which implies that \( \lambda_p \) is strictly increasing in \( p \). This gives that the unique root \( p_0 > 0 \) that \( \lambda_{p_0} = 0 \) is strictly smaller than \( 2 \) {in the case of inelastic} collision \( z < 1 \), and \( \lambda_p > 0 \) {with} \( p > 2 \) for all \( z \in (1 / 2, 1] \).

    Now we can justify the existence and show some properties of such \( y(t) \) for appropriate initial condition \( y_0 \), with the framework developed in \cite{BobylevEtAl20SSA} in {the} elastic case.

    \begin{theorem} \label{thm:existenceAndPropOfLinearizedInelasticBob}
        Let \( \mathcal{C}_p = \left\{f \in \mathit{C}(\mathbb{R}^d, \mathbb{R}) |\ \|f\|_p<\infty  \right\} \) equipped with \( p \)-Toscani norm for some \( p > 0 \), it holds that
        \begin{itemize}
            \item
            {For every \( y_0(k) \in \mathcal{C}_p \), there exists a unique $y(t,k)$ to \eqref{equ:inelasticBobylev_linearizedMildSol}} {denoted by}
            \[ y(t,k) = \exp(t (-b_0 + \mathcal{L}_e - A^\mathsf{T} k \cdot { \nabla_{k} }))  y_0 \in \mathit{C}([0, \infty), \mathcal{C}_p).
            \]

            \item
            If \( y_0(k) \in \mathcal{C}_p \), \( u(t,k) \in \mathit{C}([0, \infty), \mathcal{C}_p) \) are both nonnegative, and
            \begin{equation}\label{u>y0}
                u(t,k)
                \geq e^{-b_0 t} y_0(e^{-t A^\mathsf{T}} k)
                + {\int_{0}^{t}} e^{-b_0 (t - \tau)} \mathcal{L}_e u(\tau, e^{-(t - \tau) A^\mathsf{T}} k) \, {\mathrm{d}{\tau}},
            \end{equation}
            then one has
            \begin{align}\label{comparey}
                u(t,k) \geq y(t,k)= \exp(t (-b_0 + \mathcal{L}_e - A^\mathsf{T} k \cdot { \nabla_{k} }))  y_0(k).
            \end{align}

            \item
            If \( u_0=u_0(k), v_0=v_0(k) \in \mathcal{C}_p \) and \( 0 \leq u_0 \leq v_0 \), then it holds
            \begin{align}\label{compareuv}0 \leq \exp(t (-b_0 + \mathcal{L}_e - A^\mathsf{T} k \cdot { \nabla_{k} }))  u_0 \leq \exp(t (-b_0 + \mathcal{L}_e - A^\mathsf{T} k \cdot { \nabla_{k} }))  v_0. \end{align}
        \end{itemize}
    \end{theorem}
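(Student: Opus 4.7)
The plan is to handle all three claims by a Banach fixed point argument in $C([0,T], \mathcal{C}_p)$ for small $T>0$, combined with a monotone Picard iteration that exploits the positivity preservation of $\mathcal{L}_e$. The pivotal observation is that for $f \in \mathcal{C}_p$ one has $|f(k)| \leq \|f\|_p |k|^p$, so by the very definition \eqref{Deflap} of $\lambda_p$,
\[
\|\mathcal{L}_e f\|_p \leq (b_0 - \lambda_p) \|f\|_p,
\]
while the linear pullback satisfies $\|f(e^{-\tau A^\mathsf{T}}\cdot)\|_p \leq \|e^{-\tau A^\mathsf{T}}\|^p \|f\|_p$. Denoting by $Pu$ the right-hand side of \eqref{equ:inelasticBobylev_linearizedMildSol} with $y$ replaced by a candidate $u$, these two estimates immediately give, for a constant $C_T$ depending only on $T$, $p$, $b_0$, $\lambda_p$ and $\sup_{[0,T]}\|e^{-sA^\mathsf{T}}\|$,
\[
\sup_{t\in[0,T]} \|Pu_1(t) - Pu_2(t)\|_p \leq C_T\, T \sup_{t\in[0,T]} \|u_1(t) - u_2(t)\|_p,
\]
and an analogous stability estimate showing $P$ sends $C([0,T], \mathcal{C}_p)$ into itself. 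Continuity in $k$ of $\mathcal{L}_e u$ when $u$ is continuous is guaranteed by dominated convergence via the cutoff assumption on $b$. For $T$ small, $P$ is a contraction, so Banach's theorem yields a unique fixed point $y \in C([0,T], \mathcal{C}_p)$; since the contraction constant depends only on $T$ and not on $\|y_0\|_p$, iterating on equal-length intervals produces a unique global mild solution $y \in C([0,\infty), \mathcal{C}_p)$, which we identify with $\exp(t(-b_0 + \mathcal{L}_e - A^\mathsf{T} k \cdot \nabla_k)) y_0$.

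For the comparison assertion, assume $y_0 \geq 0$ and run a monotone Picard iteration starting from $y^{(0)} \equiv 0$, with $y^{(n+1)} = P y^{(n)}$. Since $b \geq 0$, $\mathcal{L}_e$ maps nonnegative functions to nonnegative functions, so by induction $0 \leq y^{(n)} \leq y^{(n+1)}$; indeed
\[
y^{(n+1)}(t,k) - y^{(n)}(t,k) = \int_0^t e^{-b_0(t-\tau)} \mathcal{L}_e\bigl(y^{(n)} - y^{(n-1)}\bigr)(\tau, e^{-(t-\tau) A^\mathsf{T}} k)\, d\tau \geq 0.
\]
The contraction of the previous step gives $y^{(n)} \to y$ in $C([0,T], \mathcal{C}_p)$, hence pointwise for $k \neq 0$ (and $y(0)=0$ since $y \in \mathcal{C}_p$). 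If now $u \geq 0$ satisfies \eqref{u>y0}, the claim is that $u \geq y^{(n)}$ for every $n$: the base $u \geq 0 = y^{(0)}$ is trivial, and if $u \geq y^{(n)}$ then positivity of $\mathcal{L}_e$ on nonnegative functions gives $\mathcal{L}_e u \geq \mathcal{L}_e y^{(n)}$ pointwise, and plugging this into \eqref{u>y0} yields $u \geq P y^{(n)} = y^{(n+1)}$. Passing $n \to \infty$ delivers $u \geq y$, which is \eqref{comparey}.

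The monotonicity \eqref{compareuv} in the initial datum follows from the very same scheme applied in parallel to $u_0$ and $v_0$. Writing $d^{(n)} = y_{v_0}^{(n)} - y_{u_0}^{(n)}$ for the Picard iterates starting from $0$,
\[
d^{(n+1)}(t,k) = e^{-b_0 t}(v_0 - u_0)(e^{-t A^\mathsf{T}} k) + \int_0^t e^{-b_0(t-\tau)} \mathcal{L}_e d^{(n)}(\tau, e^{-(t-\tau) A^\mathsf{T}} k)\, d\tau \geq 0
\]
by induction, using $v_0 - u_0 \geq 0$ and positivity preservation of $\mathcal{L}_e$. Taking the limit yields the claimed ordering. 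The main obstacle I anticipate is not conceptual but technical: ensuring that each iterate really lies in $\mathcal{C}_p$ with a uniform-in-$n$ norm bound on $[0,T]$, so that the monotone limits are finite and pointwise inequalities can be passed through. This is arranged by the a priori bound $\|Pu(t)\|_p \leq \|e^{-tA^\mathsf{T}}\|^p \|y_0\|_p + (b_0 - \lambda_p) \int_0^t \|e^{-(t-\tau) A^\mathsf{T}}\|^p \|u(\tau)\|_p\, d\tau$ combined with Grönwall's inequality applied to $t \mapsto \|y^{(n)}(t)\|_p$, which gives a bound independent of $n$ on $[0,T]$.
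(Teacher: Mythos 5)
Your proposal is correct and takes essentially the same route as the paper: a small-time Picard/contraction argument in $\mathit{C}([0,T],\mathcal{C}_p)$ built on the bounds $\|\mathcal{L}_e f\|_p \leq (b_0-\lambda_p)\|f\|_p$ and $\|f(e^{-\tau A^\mathsf{T}}\cdot)\|_p \leq \|e^{-\tau A^\mathsf{T}}\|^p\|f\|_p$, extension in time since $T$ is independent of $y_0$, and the comparison statements by induction on the iterates $z_n$ using that $\mathcal{L}_e$ preserves nonnegativity. The only cosmetic differences are that the paper proves uniqueness by a direct Gronwall estimate and deduces \eqref{compareuv} by applying the second bullet with $v(t,k)=\exp(t(-b_0+\mathcal{L}_e-A^\mathsf{T}k\cdot{\nabla_{k}}))v_0$ as a supersolution for the datum $u_0$, whereas you compare parallel Picard iterates; both variants are equivalent.
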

    
    \begin{proof}
        Consider first the Picard iteration
        \begin{align*}
            z_0(t, k) &= 0, \\
            z_{n + 1}(t, k)
            &= e^{-b_0 t} y_0(e^{-t A^\mathsf{T}} k)
            + {\int_{0}^{t}} e^{-b_0 (t - \tau)} \mathcal{L}_e z_n(\tau, e^{-(t - \tau) A^\mathsf{T}} k) \, {\mathrm{d}{\tau}},\quad n=0,1,\cdots,
        \end{align*}
        with \( t \in [0, T] \) for some \( T > 0 \) to be determined later. Then one gets
        \begin{align*}
            \frac{|z_{n + 1}(t, k)|}{|k|^p}
            &\leq e^{-b_0 t} \|e^{-t A^\mathsf{T}}\|^p \|y_0\|_{p}
            + {\int_{0}^{t}} 
                e^{-b_0 (t - \tau)}
                \|e^{-(t - \tau) A^\mathsf{T}}\|^p
                \frac{|\mathcal{L}_e z_n(\tau,k)|}{|k|^p}
             \, {\mathrm{d}{\tau}} \\
            &\leq \|y_0\|_{p} \sup_{[0, T]} \|e^{-\tau A^\mathsf{T}}\|^p
            + \sup_{[0, T]} \|e^{-\tau A^\mathsf{T}}\|^p
            {\int_{0}^{t}} 
                e^{-b_0 (t - \tau)}
                \|z_n(\tau)\|_{p} {\int_{\mathbb{S}^{d-1}}^{}} 
                    b(\cos\vartheta)
                    \frac{|k^+|^p +|k^-|^p}{|k|^p}
                 \, {\mathrm{d}{\sigma}}
            {\mathrm{d}{\tau}} \\
            &\leq \|y_0\|_{p} \sup_{[0, T]} \|e^{-\tau A^\mathsf{T}}\|^p
            + T \sup_{[0, T]} \|e^{-\tau A^\mathsf{T}}\|^p
            (b_0 - \lambda_p)
            \sup_{[0, T]} \|z_n(\tau)\|_{p},
        \end{align*}
        which yields
        \begin{align*}
            \sup_{[0, T]} \|z_{n + 1}(\tau)\|_{p}
            \leq \|y_0\|_{p} \sup_{[0, T]} \|e^{-\tau A^\mathsf{T}}\|^p
            + (b_0 - \lambda_p) T \sup_{[0, T]} \|e^{-\tau A^\mathsf{T}}\|^p
            \sup_{[0, T]} \|z_n(\tau)\|_{p}.
        \end{align*}
        By induction, \( z_n \in \mathit{C}([0, T], \mathcal{C}_p) \) for each \( n>0 \) and \( T > 0 \).

        Furthermore, direct calculations show that \(
        \sup_{[0, T]} \|z_1(t) - z_0(t)\|_{p}
        = \sup_{[0, T]} \|z_1(t)\|_{p}
        < \infty
        \)
        and
        \begin{align*}
            \|z_{n + 1}(t) - z_n(t)\|_{p}
            &\leq {\int_{0}^{t}} 
                e^{-b_0 (t - \tau)}
                \|(\mathcal{L}_e z_n - \mathcal{L}_e z_{n - 1})(\tau, e^{-(t - \tau) A^\mathsf{T}} k)\|_{p}
             \, {\mathrm{d}{\tau}} \\
            &\leq {\int_{0}^{t}} 
                e^{-b_0 (t - \tau)}
                \|e^{-(t - \tau) A^\mathsf{T}}\|^p \cdot \|\mathcal{L}_e (z_n(\tau) - z_{n - 1}(\tau))\|_{p}
             \, {\mathrm{d}{\tau}} \\
            &\leq \sup_{[0, T]} \|e^{-\tau A^\mathsf{T}}\|^p {\int_{0}^{t}} 
                e^{-b_0 (t - \tau)}
                {\int_{\mathbb{S}^{d-1}}^{}} 
                    b(\cos\vartheta)
                    \|z_n(\tau) - z_{n - 1}(\tau)\|_{p}
                    \frac{|k^+|^p + |k^-|^p}{|k|^p}
                 \, {\mathrm{d}{\sigma}}
            {\mathrm{d}{\tau}} \\
            &\leq (b_0 - \lambda_p) T e^{p T \|A\|}
            \sup_{[0, T]} \|z_n(\tau) - z_{n - 1}(\tau)\|_{p}.
        \end{align*}
        We choose $T>0$ {small enough such that} \( (b_0 - \lambda_p) T e^{p T \|A\|} < 1 \). This implies that \( z_n \to z \) {as $\rightarrow\infty$} in \( \sup_{[0, T]} \|\cdot\|_{p} \)-norm for some \( z \in \mathit{C}([0, T], \mathcal{C}_p) \) satisfying  \eqref{equ:inelasticBobylev_linearizedMildSol}. As the selection of \( T \) is independent {from} \( y_0 \), by standard extension argument we obtain {that there exists} some \( y \in \mathit{C}([0, \infty), \mathcal{C}_p) \) that solves \eqref{equ:inelasticBobylev_linearizedMildSol}.

        To prove the uniqueness {of the solution obtained in Theorem \ref{thm:existenceAndPropOfLinearizedInelasticBob}}, for two solutions \( \varphi_1(t,k), \varphi_2(t,k) \) of \eqref{equ:inelasticBobylev_linearizedMildSol} with the same initial condition, we have
        \begin{align*}
            \frac{|\varphi_1(t,k) - \varphi_2(t,k)|}{|k|^p}
            &\leq {\int_{0}^{t}} 
                e^{-b_0 (t - \tau)}
                |k|^{-p}
                |\mathcal{L}_e (\varphi_1 - \varphi_2)(\tau, e^{-(t - \tau) A^\mathsf{T}} k)|
             \, {\mathrm{d}{\tau}} \\
            &\leq (b_0 - \lambda_p) {\int_{0}^{t}} 
                e^{-b_0 (t - \tau)}
                \|e^{-(t - \tau) A^\mathsf{T}}\|^p
                \|\varphi_1(\tau) - \varphi_2(\tau)\|_{p}
             \, {\mathrm{d}{\tau}} \\
            &\leq (b_0 - \lambda_p) {\int_{0}^{t}} 
                e^{-b_0 (t - \tau)}
                e^{(t - \tau) p \|A\|}
                \|\varphi_1(\tau) - \varphi_2(\tau)\|_{p}
             \, {\mathrm{d}{\tau}},
        \end{align*}
        which gives
        $$
        e^{b_0 t} e^{-t p \|A\|} \|\varphi_1(t) - \varphi_2(t)\|_{p}
        \leq (b_0 - \lambda_p) {\int_{0}^{t}} 
            e^{b_0 \tau} e^{-\tau p \|A\|} \|\varphi_1(\tau) - \varphi_2(\tau)\|_{p}
         \, {\mathrm{d}{\tau}}.
        $$
        Then it follows from Gronwall inequality that \( \|\varphi_1(t) - \varphi_2(t)\|_{p} = 0 \) for all \( t \).

        To show the second part of the proposition, we note that \( z_0(t,k) = 0 \leq u(t,k) \) and
        \begin{align*}
            z_{n + 1}(t,k)
            &= e^{-b_0 t} y_0(e^{-t A^\mathsf{T}} k)
            + {\int_{0}^{t}} 
                e^{-b_0 (t - \tau)} \mathcal{L}_e z_n(\tau, e^{-(t - \tau) A^\mathsf{T}} k)
             \, {\mathrm{d}{\tau}} \\
            &\leq u(t,k)
            - {\int_{0}^{t}} 
                e^{-b_0 (t - \tau)} \mathcal{L}_e (u - z_n)(\tau, e^{-(t - \tau) A^\mathsf{T}} k)
             \, {\mathrm{d}{\tau}}
        \end{align*}
        by \eqref{u>y0} and the fact that \( \mathcal{L}_e \) is a positive operator in the sense that \( \mathcal{L}_e f \geq 0 \) if \( f \geq 0 \). Hence, by {an} induction argument one gets \( z_n(t,k) \leq u(t,k) \) for all \( n>0 \). Taking the limit {$n\rightarrow\infty$,} we have \eqref{comparey}.

        We turn to the third part. Noticing that with \( v(t,k) = \exp(t (-b_0 + \mathcal{L}_e - A^\mathsf{T} k \cdot { \nabla_{k} }))  v_0 \), \eqref{equ:inelasticBobylev_linearizedMildSol} and \( 0 \leq u_0 \leq v_0 \) imply that
        \begin{align*}
            v(t,k)
            &= e^{-b_0 t} v_0(e^{-t A^\mathsf{T}} k)
            + {\int_{0}^{t}} e^{-b_0 (t - \tau)} \mathcal{L}_e v(\tau, e^{-(t - \tau) A^\mathsf{T}} k) \, {\mathrm{d}{\tau}} \\
            &\geq e^{-b_0 t} u_0(e^{-t A^\mathsf{T}} k)
            + {\int_{0}^{t}} e^{-b_0 (t - \tau)} \mathcal{L}_e v(\tau, e^{-(t - \tau) A^\mathsf{T}} k) \, {\mathrm{d}{\tau}}.
        \end{align*}
        It follows from the above inequality and applying the conclusion of the second part that \( v(t,k) \geq \exp(t (-b_0 + \mathcal{L}_e - A^\mathsf{T} k \cdot { \nabla_{k} }))  u_0(k) \), which yields \eqref{compareuv}. Nonnegativity comes directly from the fact that \( z_n = 0 \) for all $n>0$ {in the case when} \( y_0 = 0 \). Hence, the proof of Theorem \ref{thm:existenceAndPropOfLinearizedInelasticBob} is complete.
    \end{proof}

    We have the following bound  of \( p\)-Toscani distance, which will be used in the next section when we consider the stationary profile.
    \begin{lemma} \label{thm:bobControlOfPolynomialDiffViaUp}
        For \( p \geq 0 \), define
        \begin{equation}\label{Defup}
            u_p(t, k) := |k|^p \exp(-t (\lambda_p - p \|A\|)).
        \end{equation}
        Then the solution \( \exp(t (-b_0 + \mathcal{L}_e - A^\mathsf{T} k \cdot { \nabla_{k} }))  |k|^p \) of \eqref{equ:inelasticBobylev_linearizedMildSol} with initial data \( |k|^p \) satisfies
        \begin{equation}\label{controlTos}
            \exp(t (-b_0 + \mathcal{L}_e - A^\mathsf{T} k \cdot { \nabla_{k} }))  |k|^p
            \leq u_p(t,k).
        \end{equation}
        In particular, for $A_\beta$ defined in \eqref{DefAbeta}, it holds that
        \begin{align}\label{controlTosbe}
            \exp(t (-b_0 + \mathcal{L}_e - A_\beta^\mathsf{T} k \cdot { \nabla_{k} })) |k|^p \leq e^{-p \beta t} u_p(t,k).
        \end{align}
    \end{lemma}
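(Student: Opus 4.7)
The plan is to verify that the explicit function $u_p$ is a supersolution of the mild linearized equation \eqref{equ:inelasticBobylev_linearizedMildSol} with initial datum $|k|^p$, and then invoke the comparison principle \eqref{comparey} from Theorem \ref{thm:existenceAndPropOfLinearizedInelasticBob} to conclude \eqref{controlTos}. The bound \eqref{controlTosbe} then follows by the same scheme after noting that $e^{-tA_\beta^\mathsf{T}} = e^{-\beta t}e^{-tA^\mathsf{T}}$.

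First I would assemble two elementary building blocks. From the standard bound $\|e^{-\tau A^\mathsf{T}}\|\leq e^{\tau\|A\|}$, one has $|e^{-\tau A^\mathsf{T}}k|^p\leq e^{p\tau\|A\|}|k|^p$. The definition \eqref{Deflap} of $\lambda_p$ can be rewritten as
\begin{equation*}
\int_{\mathbb{S}^{d-1}} b(\cos\vartheta)\big(|k^+|^p + |k^-|^p\big)\,\mathrm{d}\sigma = (b_0 - \lambda_p)|k|^p,
\end{equation*}
so that $\mathcal{L}_e\big(|\cdot|^p\big)(k) = (b_0 - \lambda_p)|k|^p$. In particular, substituting $u_p(\tau,k')=|k'|^p e^{-\tau(\lambda_p-p\|A\|)}$ gives $\mathcal{L}_e u_p(\tau,k') = (b_0-\lambda_p)|k'|^p e^{-\tau(\lambda_p-p\|A\|)}$.

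Next, I would plug $u_p$ into the right-hand side of \eqref{equ:inelasticBobylev_linearizedMildSol}. The initial contribution is bounded by
\begin{equation*}
e^{-b_0 t}|e^{-tA^\mathsf{T}}k|^p \leq |k|^p\, e^{(-b_0+p\|A\|)t},
\end{equation*}
while the integral contribution is estimated by
\begin{equation*}
(b_0-\lambda_p)|k|^p \int_0^t e^{-b_0(t-\tau)}\, e^{p(t-\tau)\|A\|}\, e^{-\tau(\lambda_p-p\|A\|)}\,\mathrm{d}\tau.
\end{equation*}
Because the two scalar exponents in the integrand differ by exactly $b_0-\lambda_p$, the integral evaluates to $u_p(t,k) - |k|^p e^{(-b_0+p\|A\|)t}$ (with an obvious limiting argument should $b_0=\lambda_p$). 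Adding the two pieces gives an upper bound of $u_p(t,k)$, which is precisely the hypothesis \eqref{u>y0} of the comparison statement in Theorem \ref{thm:existenceAndPropOfLinearizedInelasticBob}, and hence \eqref{controlTos} follows.

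For \eqref{controlTosbe}, I would set $v(t,k):=e^{-p\beta t}u_p(t,k)$ and repeat the verification with $A$ replaced by $A_\beta$. Since $|e^{-\tau A_\beta^\mathsf{T}}k|^p = e^{-p\beta\tau}|e^{-\tau A^\mathsf{T}}k|^p$, the prefactor $e^{-p\beta t}$ cleanly factors out of both the initial term and the time integral (the $e^{-p\beta(t-\tau)}$ from the shifted transport combines with the $e^{-p\beta\tau}$ from $v(\tau,\cdot)$), reducing the computation to the previous one. The main (in fact only) non-routine step is this telescoping of the scalar integral, but once it is identified there is no analytical obstruction; the rest is bookkeeping plus one application of the comparison principle already proved.
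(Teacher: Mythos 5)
Your proposal is correct and follows essentially the same route as the paper: compute $\mathcal{L}_e|k|^p=(b_0-\lambda_p)|k|^p$, verify that $u_p$ is a supersolution of the mild form \eqref{equ:inelasticBobylev_linearizedMildSol} via exactly the telescoping scalar integral you describe, and conclude by the comparison statement \eqref{comparey} with $y_0=|k|^p$, $u=u_p$. Your treatment of \eqref{controlTosbe} by factoring out $e^{-p\beta\tau}$ from $e^{-\tau A_\beta^\mathsf{T}}$ is also the intended argument.
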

    \begin{proof}
        By direct computation, one gets
        \begin{align*}
            \mathcal{L}_e|k|^p
            &= {\int_{\mathbb{S}^{d-1}}^{}} 
                b(\cos\vartheta) (|k^+|^p + |k^-|^p)
             \, {\mathrm{d}{\sigma}} \\
            &= |k|^p
            {\int_{\mathbb{S}^{d-1}}^{}} 
                b(\cos\vartheta) \frac{|k^+|^p + |k^-|^p}{|k|^p}
             \, {\mathrm{d}{\sigma}} \\
            &= |k|^p (b_0 - \lambda_p),
        \end{align*}
        which yields \( \mathcal{L}_e u_p = (b_0 - \lambda_p) u_p \). Then we have
        \begin{align*}
            &\quad\;\,
            e^{-b_0 t} |e^{-t A^\mathsf{T}} k|^p
            + {\int_{0}^{t}} 
                e^{-b_0 (t - \tau)} \mathcal{L}_e u_p(\tau, e^{-(t - \tau) A^\mathsf{T}} k)
             \, {\mathrm{d}{\tau}} \\
            &\leq e^{-b_0 t} \|e^{-t A^\mathsf{T}}\|^p |k|^p
            + (b_0 - \lambda_p) {\int_{0}^{t}} 
                e^{-b_0 (t - \tau)}
                u_p(\tau, e^{-(t - \tau) A^\mathsf{T}} k)
             \, {\mathrm{d}{\tau}} \\
            &\leq e^{-b_0 t} \|e^{-t A^\mathsf{T}}\|^p |k|^p
            + (b_0 - \lambda_p) {\int_{0}^{t}} 
                e^{-b_0 (t - \tau)}
                e^{-\tau (\lambda_p - p \|A\|)} \|e^{-(t - \tau) A^\mathsf{T}}\|^p |k|^p
             \, {\mathrm{d}{\tau}} \\
            &\leq |k|^p (
            e^{-b_0 t} e^{p t \|A\|}
            + (b_0 - \lambda_p) {\int_{0}^{t}} 
                e^{-b_0 (t - \tau)}
                e^{-\tau (\lambda_p - p \|A\|)} e^{p (t - \tau) \|A\|}
             \, {\mathrm{d}{\tau}}
            ) \\
            &= |k|^p e^{-t (\lambda_p - p \|A\|)} = u_p,
        \end{align*}
        which, together with \eqref{comparey} in Theorem \ref{thm:existenceAndPropOfLinearizedInelasticBob} by substituting $y_0(k)=|k|^p$ and $u=u_p$, gives \eqref{controlTos}.
    \end{proof}

    With the help {of the results above}, we have the stability result on mild solution of \eqref{equ:inelasticBobylev}.

    \begin{theorem}
        Let \( \varphi(t,k), \psi(t,k) \) be mild solutions of \eqref{equ:inelasticBobylev} satisfying \eqref{equ:inelasticBobylev_mildForm} with initial conditions \( \varphi_0(k), \psi_0(k) \in \mathcal{K} \). Suppose \( |\varphi_0(k) - \psi_0(k)| \leq C |k|^p \) for some \( p > 0 \) and \( C > 0 \), then
        \begin{equation}\label{pstable.rj}
            |\varphi(t,k) - \psi(t,k)| \leq C u_p(t,k),
        \end{equation}
        where $u_p$ is defined in \eqref{Defup}.
        In particular,
        \begin{align}\label{stable}
            \|\varphi(t) - \psi(t)\|_{p} \leq C e^{-t (\lambda_p - p \|A\|)} \to 0 \quad \text{as} \ t\to \infty
        \end{align} if \( \lambda_p / p > \|A\|\).
    \end{theorem}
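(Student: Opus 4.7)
The plan is to chain together the three preceding machinery results in a single line of comparison estimates. First, I would apply Theorem \ref{thm:controlOfBobMildSolDiffViaLinearizedMildSol} to the two mild solutions $\varphi,\psi$ with initial data $\varphi_0,\psi_0$, which gives immediately that
\begin{equation*}
    |\varphi(t,k)-\psi(t,k)| \leq y(t,k),
\end{equation*}
where $y(t,k)=\exp(t(-b_0+\mathcal{L}_e-A^\mathsf{T} k\cdot\nabla_k))y_0$ solves the linearized mild equation \eqref{equ:inelasticBobylev_linearizedMildSol} with initial data $y_0(k)=|\varphi_0(k)-\psi_0(k)|$. By hypothesis, $0\leq y_0(k)\leq C|k|^p$, so the monotonicity statement \eqref{compareuv} from the third part of Theorem \ref{thm:existenceAndPropOfLinearizedInelasticBob} (with $u_0=y_0$ and $v_0=C|k|^p$) yields
\begin{equation*}
    y(t,k) \leq C \exp(t(-b_0+\mathcal{L}_e-A^\mathsf{T} k\cdot\nabla_k))|k|^p.
\end{equation*}

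Next, Lemma \ref{thm:bobControlOfPolynomialDiffViaUp}, applied to the right-hand side, bounds this supersolution by $u_p(t,k)=|k|^p\exp(-t(\lambda_p-p\|A\|))$. Combining the three inequalities produces \eqref{pstable.rj}, namely $|\varphi(t,k)-\psi(t,k)|\leq C u_p(t,k)$. Dividing both sides by $|k|^p$ and taking the supremum over $k\neq 0$ gives
\begin{equation*}
    \|\varphi(t)-\psi(t)\|_p \leq C e^{-t(\lambda_p-p\|A\|)},
\end{equation*}
which is exactly the second claimed bound; the decay to zero as $t\to\infty$ then follows under the standing assumption $\lambda_p/p>\|A\|$, since this makes the exponent negative.

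There is essentially no hard step here: all the nontrivial work (the $\mathcal{L}_e$-Lipschitz comparison against the linearized flow, the positivity/monotonicity of the linearized semigroup, and the explicit bound of the linearized evolution on the initial datum $|k|^p$) has already been carried out in Theorems \ref{thm:controlOfBobMildSolDiffViaLinearizedMildSol}, \ref{thm:existenceAndPropOfLinearizedInelasticBob} and Lemma \ref{thm:bobControlOfPolynomialDiffViaUp}. The only point to double-check is that the hypothesis $|\varphi_0-\psi_0|\leq C|k|^p$ is consistent with $y_0\in\mathcal{C}_p$ so that Theorem \ref{thm:existenceAndPropOfLinearizedInelasticBob} is applicable; since $|\varphi_0-\psi_0|$ is continuous and satisfies this bound, it does lie in $\mathcal{C}_p$ (with $\|y_0\|_p\leq C$), so the chain is valid.
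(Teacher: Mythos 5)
Your proposal is correct and follows essentially the same route as the paper: the paper likewise chains Theorem \ref{thm:controlOfBobMildSolDiffViaLinearizedMildSol} (comparison with the linearized mild solution), the monotonicity/comparison part of Theorem \ref{thm:existenceAndPropOfLinearizedInelasticBob} applied to $y_0\leq C|k|^p$, and Lemma \ref{thm:bobControlOfPolynomialDiffViaUp} to obtain $|\varphi(t,k)-\psi(t,k)|\leq Cu_p(t,k)$, from which \eqref{stable} follows by dividing by $|k|^p$. Your extra remark that $y_0=|\varphi_0-\psi_0|$ is continuous with $\|y_0\|_p\leq C$, hence lies in $\mathcal{C}_p$, is a valid and welcome check that the paper leaves implicit.
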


    \begin{proof}
        By our assumption, \( \varphi_0 - \psi_0, C |k|^p \in \mathcal{C}_p \), it holds by Theorem \ref{thm:controlOfBobMildSolDiffViaLinearizedMildSol} and Theorem \ref{thm:existenceAndPropOfLinearizedInelasticBob} that
        \begin{align*}
            |\varphi(t,k) - \psi(t,k)|
            &\leq \exp(t (-b_0 + \mathcal{L}_e - A^\mathsf{T} k \cdot { \nabla_{k} }))  |\varphi_0(k) - \psi_0(k)| \\
            &\leq C \exp(t (-b_0 + \mathcal{L}_e - A^\mathsf{T} k \cdot { \nabla_{k} })) |k|^p \\
            &\leq C u_p(t,k),
        \end{align*}
        which gives \eqref{pstable.rj}. Furthermore, \eqref{stable} is an immediate consequence of \eqref{pstable.rj} provided  \(  \|A\|<\lambda_p / p \).
    \end{proof}

    \begin{remark}
        In particular, since {the} Dirac mass at origin \( \varphi = 1 \) has unit mass, zero momentum and finite energy, and is a stationary solution to \eqref{equ:inelasticBobylev_mildForm}, for \( \|A\| <\lambda_2/2= \zeta / 2 \) where $\zeta$ is defined in \eqref{Defzeta}, every initial condition also with unit mass, zero momentum and finite energy weakly converges to {the Dirac mass $\varphi=1$}, and hence, in such case there is no nontrivial stationary solution of \eqref{equ:inelasticBobylev} with finite energy for \( z < 1 \).
    \end{remark}

    We summarize some differences between the inelastic and elastic cases. Let $f,g$ be solutions to \eqref{equ:inelasticBoltzmann} with the initial data $f_0$, $g_0$. On elastic collision, \( \lambda_2 = 0 \), then due to the monotonicity of $\lambda_p$ in \eqref{monola}, if we still want \eqref{stable} to hold for $(\varphi,\psi)=(\mathcal{F} f,\mathcal{F} g)$, it is necessary to consider \( p > 2 \) as in \cite{BobylevEtAl20SSA}, which requires the initial conditions $f_0(v)$ and $g_0(v)$ to have the same second moments. On the other hand, for inelastic collision, one can consider the case $p=2$ since we have \( \lambda_2 = \zeta > 0 \), which implies that, assuming \( \|A\| < \zeta / 2 \), on two initial probability measures \( f_0, g_0 \) with unit mass, zero momentum and finite energy, \eqref{stable} still holds. Notice that now we no longer require the second moments of $f_0$ and $g_0$ to be the same.

    \subsection{Stationary self-similar Fourier profile}\label{sec:statFourierProfile}
    The existence of {a} self-similar profile with finite second moments under small parameters can also be done in the Fourier framework. The stationary solution $\Phi=\Phi(k)$ for \eqref{equ:inelasticBobylevSelfSim} with self-similar parameter \( \beta \in \mathbb{R} \) {solves the equation}
    \begin{equation} \label{equ:inelasticBobylevSelfSimStationary}
        \Phi(k)
        = {\int_{0}^{\infty}} e^{-b_0 t} \widehat{Q_e^+}(\Phi,\Phi)(e^{-\beta t} e^{-t A^\mathsf{T}} k) \, {\mathrm{d}{t}}.
    \end{equation}
    Define
    $$
    B : {{k}^{\otimes 2}}=\sum_{i,j=1}^d B_{ij}k_ik_j,
    $$
    and
    \begin{equation}
        \label{def.Sp}
        \mathcal{S}_p \coloneqq \left\{
            \varphi \in \mathcal{K} 
            \mid 
            \|\varphi - (1 - \frac{1}{2} B : {{k}^{\otimes 2}})\|_{p} < \infty 
        \right\}.
    \end{equation}

    \begin{theorem} \label{thm:existenceOfBobStatProfileWithFiniteEnergy}
        Let \( p \in (2, 4] \). Then there exists \( \epsilon_p > 0 \) such that for \(\|A\|< \epsilon_p \) and
        \( 1 - z < \epsilon_p \), there exist \( \beta \in \mathbb{R} \), \( B \in \mathbb{R}^{d \times d} \) which is positive definite, and a unique \( \Phi \in \mathcal{S}_p\)
        that solves \eqref{equ:inelasticBobylevSelfSimStationary}.
    \end{theorem}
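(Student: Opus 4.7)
The plan is to parallel the Radon-measure argument of Theorem \ref{thm:existenceOfStatProfileWithSmallness} but carried out entirely in Fourier space, exploiting the $p$-Toscani contraction machinery developed in Section \ref{sec:fourierSol}. Concretely, I will reformulate \eqref{equ:inelasticBobylevSelfSimStationary} as a fixed point equation on $\mathcal{S}_p$ and apply Banach's contraction theorem, so that existence and uniqueness come out of the same argument.

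\textbf{Step 1 (selection of $\beta$ and $B$).} For sufficiently small $\|A\|$ and $1-z$, use the same perturbation argument as in the proof of Theorem \ref{thm:existenceOfStatProfileWithSmallness} (cf.\ \cite[Lem.~4.16]{JamesEtAl17SSP}) to produce $\beta\in\mathbb{R}$ and a symmetric positive definite $B$ with $\mathrm{tr}(B)=1$ satisfying the second moment equation \eqref{stableeq}, with $\beta$ taken to be the root with largest real part. Set $\Psi(k) := 1 - \tfrac{1}{2}B : {{k}^{\otimes 2}}$; by construction $\Psi$ captures the expected quadratic behaviour of $\Phi$ near $k=0$.

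\textbf{Step 2 (the fixed point map and invariance).} For $\varphi \in \mathcal{S}_p$ define
\begin{equation*}
    (\mathcal{T}\varphi)(k) := \int_0^{\infty} e^{-b_0 t}\,\widehat{Q_e^+}(\varphi,\varphi)\bigl(e^{-t A_\beta^{\mathsf{T}}} k\bigr)\, \mathrm{d}{t},
\end{equation*}
which is the mild-type representation of \eqref{equ:inelasticBobylevSelfSimStationary}. Convergence and the fact that $\mathcal{T}\varphi\in\mathcal{K}$ follow by writing $\widehat{Q_e^+}(\varphi,\varphi) = \widehat{Q_e^+}(\Psi,\Psi) + [\widehat{Q_e^+}(\varphi,\varphi) - \widehat{Q_e^+}(\Psi,\Psi)]$, estimating the second piece via \eqref{LeLip} by $\mathcal{L}_e|\varphi-\Psi|$ and then using Lemma \ref{thm:bobControlOfPolynomialDiffViaUp}, together with the elementary bound $|e^{-tA_\beta^{\mathsf{T}}}k|^p \leq |k|^p e^{-p\beta t + pt\|A\|}$. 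The critical invariance to verify is that $\mathcal{T}\varphi$ has second moment $B$: Taylor-expanding $\widehat{Q_e^+}(\varphi,\varphi)(q)$ to second order in $q$ (allowed because every $\varphi\in\mathcal{S}_p$ obeys $\varphi(k) = 1 - \tfrac{1}{2}B:{{k}^{\otimes 2}} + o(|k|^2)$ with $p>2$) and plugging into $\mathcal{T}\varphi$, the coefficient of ${{k}^{\otimes 2}}$ reduces to exactly \eqref{stableeq}, which $B$ satisfies by the choice of $\beta$; this is the Fourier-space counterpart of the computation in Section \ref{sec:secondMomentEqu}.

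\textbf{Step 3 (contraction).} For $\varphi,\psi\in\mathcal{S}_p$, combine \eqref{LeLip} with Lemma \ref{thm:bobControlOfPolynomialDiffViaUp} to estimate
\begin{equation*}
    \frac{|(\mathcal{T}\varphi)(k) - (\mathcal{T}\psi)(k)|}{|k|^p}
    \leq (b_0 - \lambda_p)\,\|\varphi - \psi\|_p \int_0^{\infty} e^{-b_0 t} e^{-p\beta t + pt\|A\|}\,\mathrm{d}{t}
    = \frac{b_0 - \lambda_p}{b_0 + p\beta - p\|A\|}\,\|\varphi - \psi\|_p.
\end{equation*}
Strict contraction is equivalent to $\lambda_p + p\beta > p\|A\|$. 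For $p\in(2,4]$, strict monotonicity of $\lambda_p$ in $p$ shown in \eqref{monola} gives $\lambda_p$ bounded below by a positive constant even as $z\to 1^-$ (since the elastic root of $\lambda_q=0$ is $q=2$), while Step 1 ensures $|\beta|$ and $\|A\|$ are both small. Hence for $\|A\|,1-z<\epsilon_p$ with $\epsilon_p$ sufficiently small, $\mathcal{T}$ is a strict contraction on $\mathcal{S}_p$, and since $\mathcal{S}_p$ is closed in $\|\cdot\|_p$, Banach's theorem yields a unique fixed point $\Phi\in\mathcal{S}_p$, which solves \eqref{equ:inelasticBobylevSelfSimStationary}.

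\textbf{Main obstacle.} The hardest step is Step 2's invariance: showing that the affine structure of $\mathcal{S}_p$ (i.e.\ the prescribed quadratic leading part $\Psi$) is preserved under $\mathcal{T}$. Morally this is guaranteed by the fact that the choice of $(\beta,B)$ is precisely dictated by the second-moment equation \eqref{stableeq}, but rigorously one must justify the second-order Taylor expansion of $\widehat{Q_e^+}(\varphi,\varphi)$ uniformly in $\varphi\in\mathcal{S}_p$ and then match coefficients; this is where the algebraic identity at the heart of Section \ref{sec:secondMomentEqu} is used. The existence of the pair $(\beta,B)$ itself in Step 1, via the perturbation argument from the elastic theory, is the other place where smallness of $\|A\|$ and $1-z$ is indispensable.
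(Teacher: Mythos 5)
Your proposal is correct and follows essentially the same route as the paper: the same choice of \( (\beta, B) \) from \eqref{stableeq} via the perturbation argument, the same fixed-point operator (your \( \mathcal{T} \) is the paper's \( P \)), and the same contraction estimate through the \( \mathcal{L}_e \)-Lipschitz bound \eqref{LeLip} and the constant \( \lambda_p \), with contraction precisely when \( \lambda_p/p + \beta > \|A\| \), secured by continuity in \( z \) and smallness of \( \|A\| \) and \( 1-z \). The one genuine difference is the invariance step. The paper performs the second-order expansion only at the Gaussian reference \( \Psi_0(k) = \exp(-\tfrac12 B : k^{\otimes 2}) \) — which, unlike your polynomial reference \( 1 - \tfrac12 B : k^{\otimes 2} \), is itself a characteristic function and lies in \( \mathcal{S}_p \) — obtaining \( \|P\Psi_0 - \Psi_0\|_p < \infty \) from \eqref{QPsi} together with the integral identity \eqref{control2}, and then deduces \( P(\mathcal{S}_p) \subseteq \mathcal{S}_p \) for every \( \varphi \) simply from the triangle inequality combined with the contraction bound, so no uniform-in-\( \varphi \) expansion is ever needed. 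Your plan to Taylor-expand \( \widehat{Q_e^+}(\varphi,\varphi) \) for a general \( \varphi \in \mathcal{S}_p \) can be made rigorous, but you must then control the remainder (terms of order \( |k|^4 \), \( |k|^{p+2} \), \( |k|^{2p} \), together with the trivial quadratic bound for large arguments) uniformly before integrating in \( t \), which is exactly the obstacle you flag; the paper's single-point computation plus bootstrap is the cheaper way around it. One small correction: membership \( \mathcal{T}\varphi \in \mathcal{K} \) does not follow from your decomposition around the polynomial \( \Psi \) (which is not a characteristic function); it follows, as in the paper, from the observation that \( \mathcal{T}\varphi \) is a mixture, with probability density \( b_0 e^{-b_0 t} \) in \( t \), of the characteristic functions \( k \mapsto b_0^{-1} \widehat{Q_e^+}(\varphi,\varphi)(e^{-t A_\beta^{\mathsf{T}}} k) \).
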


    \begin{proof}
        We still choose \( \beta \in \mathbb{R} \) and \( B \in \mathbb{R}^{d \times d} \) as in Theorem \ref{thm:existenceOfStatProfileWithSmallness} 
        such that \eqref{stableeq} is satisfied with $\beta$ having the largest real part. 
         Notice that we no longer require \( \mathrm{tr}(B) = 1 \) now.
        
        Since \( \Psi_0(k) := \exp(-\frac{1}{2} B : {{k}^{\otimes 2}}) \) is the characteristic function of a normal distribution, we then have \( \Psi_0 \in \mathcal{K} \), and thus \( \Psi_0 \in \mathcal{S}_p\)
        on \( 2<p \leq 4 \) with \( \mathcal{S}_p \) nonempty.
        
        Consider the Picard mapping \( P : \mathcal{S}_p \to \mathcal{K} \) by
        \begin{align*}
            P\Phi(t,k) = {\int_{0}^{\infty}} e^{-b_0 t} \widehat{Q_e^+}(\Phi,\Phi)(e^{-\beta t} e^{-t A^\mathsf{T}} k) \, {\mathrm{d}{t}}.
        \end{align*}
        The mapping is well-defined since \( P\Phi \) is a convex combination of characteristic functions
        $$
        k \mapsto b_0^{-1} \widehat{Q_e^+}(\Phi,\Phi)(e^{-\beta t} e^{-t A^\mathsf{T}} k).
        $$

        We first show that \( P\Psi_0 \in \mathcal{S}_p \).
        Since \( \Psi_0, P\Psi_0 \in \mathcal{K} \), we have \( |P\Psi_0 - \Psi_0| \leq C \).
        We expand $\Psi_0$ in $k$ such that \( \Psi_0(k) = 1 - \frac{1}{2} B : {{k}^{\otimes 2}} + O(|k|^4) \), using \( |k^\pm| \leq |k| \), it holds that
        \begin{align*}
            \widehat{Q_e^+}(\Psi_0,\Psi_0)(k)
            &= {\int_{\mathbb{S}^{d-1}}^{}} 
                b(\cos\theta)
                \big(1 - \frac{1}{2} B : {{(k^+)}^{\otimes 2}} + O(|k^+|^4)\big)
                \big(1 - \frac{1}{2} B : {{(k^-)}^{\otimes 2}} + O(|k^-|^4)\big)
             \, {\mathrm{d}{\sigma}} \\
            &= {\int_{\mathbb{S}^{d-1}}^{}} 
                b(\cos\theta)
                \big(1 - \frac{1}{2} B : ({{(k^+)}^{\otimes 2}} + {{(k^-)}^{\otimes 2}}) + O(|k|^4)\big)
             \, {\mathrm{d}{\sigma}} \\
            &= b_0 - \frac{1}{2} B :\big(
            (b_0 - \zeta) {{k}^{\otimes 2}}
            + z^2 \frac{d c_{11}}{2} (\frac{|k|^2}{d} I - {{k}^{\otimes 2}})
            \big) + O(|k|^4),
        \end{align*}
        which gives
        \begin{equation} \label{QPsi}
            \begin{aligned}[b]
                \widehat{Q_e^+}(\Psi_0,\Psi_0)(k) - b_0 \Psi_0(k)
                &= - \frac{1}{2} B : (
                - \zeta {{k}^{\otimes 2}} + z^2 \frac{d c_{11}}{2} (\frac{|k|^2}{d} I - {{k}^{\otimes 2}})
                ) + O(|k|^4) \\
                &= \frac{1}{2} {{k}^{\otimes 2}} : \big(
                \zeta B + \tilde{c} (B - \frac{\mathrm{tr}(B)}{d} I)
                \big) + O(|k|^4),
            \end{aligned}
        \end{equation}
        where \( \tilde{c} = z^2 \frac{d c_{11}}{2} > 0 \).
        We further have
        \begin{equation} \label{eqPhi0ek}
            \begin{aligned}[b]
                \Psi_0(e^{-\beta t} e^{-t A^\mathsf{T}} k)
                &= 1 - \frac{1}{2} e^{-2 \beta t} e^{-t A} B e^{-t A^\mathsf{T}} : {{k}^{\otimes 2}} + O(|e^{-\beta t} e^{-t A^\mathsf{T}} k|^4) \\
                &= \Psi_0(k) - \frac{1}{2} (e^{-2 \beta t} e^{-t A} B e^{-t A^\mathsf{T}} - B) : {{k}^{\otimes 2}} + O(|e^{-\beta t} e^{-t A^\mathsf{T}} k|^4) + O(|k|^4).
            \end{aligned}
        \end{equation}
        To bound the second term on the right-hand side above, by the selection of \( B \) such that \eqref{stableeq} holds, we have
        \begin{equation*}
            \zeta B + \tilde{c} (B - \frac{\mathrm{tr}(B)}{d} I) = -2 \beta B - A B - B A^\mathsf{T},
        \end{equation*}
        and a direct calculation shows that

        \begin{equation}\label{eqRemain}
            \frac{\mathrm{d}}{{\mathrm{d} t}} (e^{-c t} e^{-t A} B e^{-t A^\mathsf{T}})
            = -e^{-c t} e^{-t A} (c B + A B + B A^\mathsf{T}) e^{-t A^\mathsf{T}},
        \end{equation}
        for all \( c \in \mathbb{R} \). Thus, by choosing \( \|A\| \) and \( 1 - z \) to be sufficiently small such that \( 2 \|A\| + \zeta - 2 \tilde{\beta} \leq 2 \|A\| + 2|\tilde{\beta}| + \zeta < b_0 \), and then choosing $c=2\beta+b_0$ in the above equation, one gets from integrating in $t$ on both sides of \eqref{eqRemain} that
        \begin{equation}\label{control2}
            {\int_{0}^{\infty}} 
                e^{-(b_0 + 2 \beta) t} e^{-t A} (
                \zeta B + \tilde{c} (B - \frac{\mathrm{tr}(B)}{d} I) - b_0 B
                ) e^{-t A^\mathsf{T}}
             \, {\mathrm{d}{t}}
            = -B.
        \end{equation}
        Notice that to get the above equality, we use the fact that \( \|e^{-(2 \beta + b_0) t} e^{-t A} e^{-t A^\mathsf{T}}\| \leq e^{-(b_0 + 2 \tilde{\beta} - \zeta - 2 \|A\|) t} \to 0 \).
        For the last two terms on the right-hand side of \eqref{eqPhi0ek},  one has \( e^{-b_0 t} |e^{-\beta t} e^{-t A^\mathsf{T}} k|^4 \leq e^{-b_0 t} e^{-4 (\tilde{\beta} - \zeta / 2 - \|A\|) t} |k|^4 \). Then for sufficiently small \( \|A\| \) and \( 1 - z \), it holds that \( 4 (\tilde{\beta} - \zeta / 2 - \|A\|) < b_0 \), which yields
        \begin{align}\label{control3}
            {\int_{0}^{\infty}} e^{-b_0 t} (O(|k|^4) + O(|e^{-\beta t} e^{-t A^\mathsf{T}} k|^4)) \, {\mathrm{d}{t}} = O(|k|^4) .
        \end{align}
        It follows from \eqref{eqPhi0ek}, \eqref{control2} and \eqref{control3} that
        \begin{align*}
            P\Psi_0(k) - \Psi_0(k)
            &= {\int_{0}^{\infty}} 
                e^{-b_0 t} \big(
                (\widehat{Q_e^+}(\Psi_0,\Psi_0) - b_0 \Psi_0)(e^{-\beta t} e^{-t A^\mathsf{T}} k)
                + b_0 (\Psi_0(e^{-\beta t} e^{-t A^\mathsf{T}} k) - \Psi_0(k))
                \big)
             \, {\mathrm{d}{t}} \\
            &= \frac{1}{2}
            {\int_{0}^{\infty}} 
                e^{-b_0 t} \big(
                e^{-2 \beta t} e^{-t A} (
                \zeta B + \tilde{c} (B - \frac{\mathrm{tr}(B)}{d} I) - b_0 B
                ) e^{-t A^\mathsf{T}} : {{k}^{\otimes 2}} \\
                &\qquad\qquad\qquad\quad
                + b_0 B : {{k}^{\otimes 2}}
                + O(|k|^4) + O(|e^{-\beta t} e^{-t A^\mathsf{T}} k|^4)
                \big)
             \, {\mathrm{d}{t}} \\
            & = O(|k|^4).
        \end{align*}
        Hence, \( | P\Psi_0(k)-\Psi_0(k) | \leq C \min(1, |k|^4) \), and \( \|P\Psi_0 - \Psi_0\|_{p} \leq C\sup_{k \neq 0} \frac{\min(1, |k|^4)}{|k|^p} < \infty \) for \( 2<p \leq 4 \), which implies \( P\Psi_0 \in \mathcal{S}_p \).

        Now we can prove that  \( P : \mathcal{S}_p \to \mathcal{S}_p \) is a contraction mapping. Let \( \varphi(k), \psi(k) \in \mathcal{S}_p \), then {\( \|\varphi - \psi\|_{p} \leq \|\varphi - (1 - \frac{1}{2} B : {{k}^{\otimes 2}})\|_{p} + \|\psi - (1 - \frac{1}{2} B : {{k}^{\otimes 2}})\|_{p} < \infty \),} which, combining with \eqref{LeLip}, further yields
        \begin{equation} \label{contra}
            \begin{aligned}[b]
                |P\varphi(k) - P\psi(k)|
                &\leq {\int_{0}^{\infty}} e^{-b_0 t} |\widehat{Q_e^+}(\varphi,\varphi) - \widehat{Q_e^+}(\psi,\psi)(e^{-\beta t} e^{-t A^\mathsf{T}} k)| \, {\mathrm{d}{t}} \\
                &\leq {\int_{0}^{\infty}} e^{-b_0 t} \mathcal{L}_e|\varphi - \psi|(e^{-\beta t} e^{-t A^\mathsf{T}} k) \, {\mathrm{d}{t}} \\
                &\leq \|\varphi - \psi\|_{p} {\int_{0}^{\infty}} e^{-b_0 t} \mathcal{L}_e|k|^p(e^{-\beta t} e^{-t A^\mathsf{T}} k) \, {\mathrm{d}{t}} \\
                &\leq (b_0 - \lambda_p) \|\varphi - \psi\|_{p} {\int_{0}^{\infty}} e^{-b_0 t} e^{-p \beta t} \|e^{-t A^\mathsf{T}}\|^p |k|^p \, {\mathrm{d}{t}} \\
                &\leq (b_0 - \lambda_p) \|\varphi - \psi\|_{p} |k|^p {\int_{0}^{\infty}} 
                    e^{-p t (b_0 / p + \tilde{\beta} - \zeta / 2 - \|A\|)}
                 \, {\mathrm{d}{t}}.
            \end{aligned}
        \end{equation}

        Recalling the definitions of $\lambda_p$ and $\zeta$ in \eqref{Deflap} and \eqref{Defzeta}, for any \( p \in (2, 4] \), the mapping \( z \mapsto \lambda_p / p - \zeta / 2 \) is continuous, and at \( z = 1 \) it holds that \( (\lambda_p / p - \zeta / 2 )|_{z = 1} > 0 \). Therefore, for \( 1 - z > 0 \) sufficiently small, one has \( \lambda_p / p - \zeta / 2 > 0 \).

        Moreover, we choose \( \|A\| \) to be small enough such that $$ \|A\| - \tilde{\beta} \leq \|A\| + |\tilde{\beta}| < \lambda_p / p - \zeta / 2 \leq b_0 / p - \zeta / 2, $$ the last integral in \eqref{contra} is finite with \( (b_0 - \lambda_p) {\int_{0}^{\infty}} e^{-t (b_0 + p \tilde{\beta} - p \frac{\zeta}{2} - p \|A\|)} \, {\mathrm{d}{t}} = \frac{b_0 - \lambda_p}{b_0 + p (\tilde{\beta} - \zeta / 2 - \left\|A\right\|)} < 1 \),  which yields
        \begin{align}\label{contraP}
            \|P\varphi - P\psi\|_{p} \leq  \frac{b_0 - \lambda_p}{b_0 + p (\tilde{\beta} - \zeta / 2 - \|A\|)} \|\varphi - \psi\|_{p}.
        \end{align}
        For \( \varphi \in \mathcal{S}_p \), since \( P\Psi_0 \in \mathcal{S}_p \), we have from \eqref{contraP} that $$\|P\varphi - \Psi_0\|_{p} \leq \|P\varphi - P\Psi_0\|_{p} + \|P\Psi_0 - \Psi_0\|_{p} \leq C \|\varphi - \Psi_0\|_{p} + \|P\Psi_0 - \Psi_0\|_{p} < \infty,$$ then \( P\varphi \in \mathcal{S}_p \), which, together with \eqref{contraP}, implies that \( P : \mathcal{S}_p \to \mathcal{S}_p \) is a Banach contraction mapping, and has a unique fixed point \( \Phi \in \mathcal{S}_p \). The proof of Theorem \ref{thm:existenceOfBobStatProfileWithFiniteEnergy} is complete.
    \end{proof}

    \begin{remark}
        Recalling \eqref{def.Sp}, note that \( \mathcal{S}_p \supseteq \mathcal{S}_q \) on \( p \leq q \leq 4 \). This implies that, while the smallness conditions on \( \|A\| \) and \( 1 - z \) depend on \( p \), once \( \|A\| \) and \( 1 - z \) are fixed, the stationary solution \( \Phi_p \) obtained is the same for all \( p \in (2, 4] \) for which the required smallness conditions are satisfied.
    \end{remark}

    The convergence rate to a stationary profile can be improved from \( \lambda_p + 2 \beta - p \|A\| \) indicated by \eqref{stable} with the following theorem.

    \begin{theorem} \label{thm:convergenceToBobStatSol}
        Assume the same condition in Theorem \ref{thm:existenceOfBobStatProfileWithFiniteEnergy}, and let \( \varphi(t) \) be the mild solution of \eqref{equ:inelasticBobylevSelfSim} with self-similar scaling parameter \( \beta \) and initial condition \( \varphi_0 \in \mathcal{K}^{2} \) satisfying \( \|\varphi_0 - (1 - \frac{1}{2} C_0 : {{k}^{\otimes 2}})\|_p < \infty \) for some positive semidefinite matrix \( C_0 \).
        Then there exist \( \lambda > 0 \) depending on \( C_0 \), some constant $\nu>0$ and
        \begin{align}\label{Defeta}
            \eta = \frac{1}{2} \min\{\lambda_p + p (\beta - \|A\|), \nu + \zeta + 2 (\beta - \|A\|)\} > 0, \end{align}such that
        \begin{equation}\label{converFourier}
            |\varphi(t, k) - \Phi(\lambda k)| \leq C e^{-\eta t} (|k|^p + |k|^2).
        \end{equation}
    \end{theorem}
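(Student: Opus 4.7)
The plan is to compare $\varphi(t, k)$ to the rescaled stationary profile $\Psi(k) := \Phi(\lambda k)$, where $\lambda$ is fixed by matching the asymptotic second moment. Expanding the self-similar Bobylev equation \eqref{equ:inelasticBobylevSelfSim} about $k = 0$, the second-moment tensor $C(t) := -\nabla_k^2 \varphi(t, 0)$ satisfies the matrix ODE $\dot C + \mathcal{M} C = 0$, where $\mathcal{M} M := (2\beta + \zeta + \tilde c) M + L M$ is precisely the operator from \eqref{equ:secondMomentEquWithSelfSim}. By the choice of $\beta$ in Theorem \ref{thm:existenceOfStatProfileWithSmallness}, $0$ is the largest real eigenvalue of $\mathcal{M}$ with eigenvector $B$, while every other eigenvalue has strictly positive real part. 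Decompose $C_0 = \lambda^2 B + \tilde C_0$ into the $0$-eigenspace and its $\mathcal{M}$-invariant complement: this fixes $\lambda > 0$, and the residual $\tilde C(t) := C(t) - \lambda^2 B = e^{-\mathcal{M} t} \tilde C_0$ decays at the spectral-gap rate, which defines $\nu > 0$ via $\|\tilde C(t)\| \leq C e^{-(\zeta + 2\beta + \nu) t}$.

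Next, introduce the quadratic surrogate $Q(t, k) := -\tfrac{1}{2} \tilde C(t) : {{k}^{\otimes 2}}$. A direct calculation, running the derivation of the matrix ODE in reverse, verifies that $Q$ is an exact solution of the linearized equation $\partial_t y + A_\beta^\mathsf{T} k \cdot \nabla_k y = (\mathcal{L}_e - b_0) y$, because $\mathcal{L}_e$ and $A_\beta^\mathsf{T} k \cdot \nabla_k$ preserve the space of quadratic forms $\{M : {{k}^{\otimes 2}}\}$ and act on the matrix coefficient exactly as $-\mathcal{M}$. Hence $|Q(t, k)| \leq C e^{-(\zeta + 2\beta + \nu) t} |k|^2$. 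Write $\varphi(t) - \Psi = Q(t) + R(t)$; the residual starts from $R(0, k) = (\varphi_0(k) - \Psi(k)) + \tfrac{1}{2} \tilde C_0 : {{k}^{\otimes 2}}$, which lies in $\mathcal{C}_p$: the term $\varphi_0 - (1 - \tfrac{1}{2} C_0 : {{k}^{\otimes 2}})$ is in $\mathcal{C}_p$ by assumption, while $\Psi - (1 - \tfrac{1}{2} \lambda^2 B : {{k}^{\otimes 2}}) \in \mathcal{C}_p$ by Theorem \ref{thm:existenceOfBobStatProfileWithFiniteEnergy}.

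Subtracting the linearized equation for $Q$ from the equation for $\varphi - \Psi$ and using the bilinear decomposition $\widehat{Q_e^+}(\varphi, \varphi) - \widehat{Q_e^+}(\Psi, \Psi) = \mathcal{L}_e(\varphi - \Psi) + \widehat{Q_e^+}(\varphi - \Psi, \varphi - 1) + \widehat{Q_e^+}(\Psi - 1, \varphi - \Psi)$, one obtains $\partial_t R + A_\beta^\mathsf{T} k \cdot \nabla_k R = (\mathcal{L}_e - b_0) R + F(t, k)$, where the forcing $F$ consists of bilinear cross terms pairing $Q$ and $R$ with $(\varphi - 1)$ and $(\Psi - 1)$. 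Since $\varphi - 1, \Psi - 1 = O(|k|^2)$ (using $\varphi(t) \in \mathcal{K}^2$ and $\Psi \in \mathcal{S}_p$) and $Q = O(|k|^2 e^{-(\zeta + 2\beta + \nu) t})$, the forcing $F$ is pointwise controlled at the $|k|^p$-scale with fast time decay. Applying the pointwise comparison of Theorem \ref{thm:controlOfBobMildSolDiffViaLinearizedMildSol} (adapted to $A_\beta$), together with the bound \eqref{controlTosbe} from Lemma \ref{thm:bobControlOfPolynomialDiffViaUp} at order $p$, and a Gronwall argument that absorbs $F$, yields $|R(t, k)| \leq C e^{-t(\lambda_p + p(\beta - \|A\|))} |k|^p$. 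Combining with the $Q$-estimate gives \eqref{converFourier} with $\eta$ as defined in \eqref{Defeta}.

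\textbf{Main obstacle.} The principal difficulty is extracting the improved rate $\nu + \zeta + 2(\beta - \|A\|)$ for the $|k|^2$-component of $\varphi - \Psi$: a direct application of Lemma \ref{thm:bobControlOfPolynomialDiffViaUp} at $p = 2$ would only yield the base rate $\zeta + 2(\beta - \|A\|)$. Obtaining the spectral-gap improvement requires identifying the $|k|^2$-part of $\varphi - \Psi$ with the explicit quadratic form $Q$ evolving through the matrix ODE, and exploiting the fact that $\tilde C_0$ lies in the $\mathcal{M}$-invariant complement of $\mathrm{span}(B)$. A subsidiary challenge is ensuring that the spectral-gap decay carried by $Q$ is preserved under the nonlinear corrections in the equation for $R$, so that the Gronwall loop does not contaminate the $|k|^p$-rate nor erase the $\nu$-improvement.
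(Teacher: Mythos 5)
Your first half is sound and close in spirit to the paper: the second–moment ODE, the spectral gap defining $\nu$, and the observation that $Q(t,k)=-\tfrac12\tilde C(t):{{k}^{\otimes 2}}$ solves the linearized equation $\partial_t y+A_\beta^\mathsf{T}k\cdot\nabla_k y=(\mathcal{L}_e-b_0)y$ exactly are all correct, and you correctly identify that the whole point is to capture the $|k|^2$-part of $\varphi-\Phi(\lambda\cdot)$ with the spectral-gap rate. The gap is in the last step, $|R(t,k)|\le Ce^{-t(\lambda_p+p(\beta-\|A\|))}|k|^p$ via ``a Gronwall argument that absorbs $F$''. Your claim that $F$ is ``pointwise controlled at the $|k|^p$-scale'' is false for large $|k|$: the cross terms are of size $O(e^{-(\zeta+2\beta+\nu)\tau}|k|^4)$ and $O(\sup_{k'}\frac{|R(\tau,k')|}{|k'|^p}\,|k|^{p+2})$, so $F(\tau,\cdot)$ has infinite $p$-Toscani norm and a Gronwall loop in $\|\cdot\|_p$ does not even start, while a pointwise bootstrap with ansatz $\rho(t)|k|^p$ cannot be closed uniformly in $k$ (and the drift $e^{-sA_\beta^\mathsf{T}}$ moves frequencies, so a cutoff in $|k|$ is not clean either). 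Moreover, even on bounded $|k|$, the $R$-dependent part of $F$ carries a constant proportional to $\sup_t\|C(t)\|$ and $\lambda^2\|B\|$, i.e.\ to the arbitrary temperature scale of $C_0$; Gronwall absorption of a forcing with a non-small, $C_0$-dependent constant degrades the decay rate, whereas the rate in \eqref{Defeta} must be independent of $C_0$. Note also that your claimed bounds for $Q$ and $R$ would give \eqref{converFourier} \emph{without} the factor $\tfrac12$ in $\eta$, strictly stronger than the theorem — a warning sign that the sketch is too optimistic. A minor additional point: Theorem \ref{thm:controlOfBobMildSolDiffViaLinearizedMildSol} compares two exact solutions of the nonlinear equation, so it does not apply to $R$ directly; you would need a forced version of the linear comparison, which is fixable, but the forcing estimates above are the real obstruction.

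The paper's proof removes exactly these difficulties by a different choice of surrogate and a two-step time argument. Instead of the polynomial $1+Q$, it compares $\varphi$ with the genuine characteristic function $\phi(t,k)=\exp(-\tfrac12 C(t):{{k}^{\otimes 2}})$ (same quadratic jet, but $|\phi|\le1$). Because $C(t)$ solves the moment ODE, $\phi$ is an approximate solution of the full nonlinear equation with remainder $\delta(t,k)=O(|k|^4)$ uniformly in time, and the nonlinear difference is swallowed wholesale by the $\mathcal{L}_e$-Lipschitz bound \eqref{LeLip}, whose constant is $b_0$ and is temperature-free; this yields only the time-uniform bound $|\varphi(t,k)-\phi(t,k)|\le C|k|^p$, where boundedness $|\varphi|,|\phi|\le 1$ converts $|k|^4$ into $|k|^p$ for $|k|\ge1$. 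The exponential decay is then generated in a second step: at time $T/2$ one has $|\varphi(T/2,k)-\Phi(\lambda k)|\le C|k|^p+Ce^{-\nu T/2}|k|^2$, and applying Theorem \ref{thm:controlOfBobMildSolDiffViaLinearizedMildSol} together with \eqref{controlTosbe} over $[T/2,T]$ produces the factors $e^{-\frac{T}{2}(\lambda_p+p(\beta-\|A\|))}$ and $e^{-\frac{T}{2}(\zeta+2(\beta-\|A\|))}$ — this is precisely the origin of the $\tfrac12$ in \eqref{Defeta}. To repair your argument you would either have to reproduce this two-step device or find a genuinely new way to close the forced estimate for $R$ uniformly in $k$ with a $C_0$-independent rate; as written, the proposal does not do so.
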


    \begin{proof}
        By assumption, \( C_0 \) is the second moment matrix of the probability measure corresponding to \( \varphi_0 \).
        Due to Theorem \ref{thm:mildSolAsWeakSolAndProperties}, \( \varphi(t,k) \) also has a corresponding finite second moment matrix \( C(t) \), which is positive semi-definite on \( t \geq 0 \), and satisfies \eqref{equ:secondMomentEquWithSelfSim} with initial condition \( C(0) = C_0 \).
        By the selection of \( \beta, B \), there exists \( \lambda,\nu,C \geq 0 \) such that \( \|C(t) - \lambda^2 B\| \leq Ce^{-\nu t} \).

        Let \( \phi(t, k) = \exp(-\frac{1}{2} C(t) : {{k}^{\otimes 2}}) \). By a similar calculation as in \eqref{QPsi}, one has
        \begin{equation*}
            \widehat{Q_e}(\phi,\phi)(t,k)
            = \widehat{Q_e^+}(\phi,\phi)(t,k) - b_0 \phi(t,k)
            = \frac{1}{2} {{k}^{\otimes 2}} : (\zeta C(t) + \tilde{c} (C(t) - \frac{\mathrm{tr}(C(t))}{d} I)) + O(|k|^4),
        \end{equation*}
        where the \( O(|k|^4) \) term is uniformly bounded in time since \( C(t)\) converges to \(\lambda^2 B \). Thus, it further holds that
        \begin{equation*}
            {\partial_{t}}\phi + A_\beta^\mathsf{T} k \cdot { \nabla_{k} }\phi - \widehat{Q_e}(\phi,\phi)
            = -\frac{1}{2} {{k}^{\otimes 2}} : \big(
            \frac{\mathrm{d}}{{\mathrm{d} t}} C(t) + A_\beta C(t) + C(t) A_\beta^\mathsf{T} + \zeta C(t)
            + \tilde{c} (C(t) - \frac{\mathrm{tr}(C(t))}{d} I)
            \big) + O(|k|^4),
        \end{equation*} which implies
        \begin{equation*}
            ({\partial_{t}} + A_\beta^\mathsf{T} \cdot { \nabla_{k} } + b_0)(\varphi - \phi)
            = (\widehat{Q_e^+}(\varphi,\varphi) - \widehat{Q_e^+}(\phi,\phi)) + \delta(t,k)
        \end{equation*}
        for some \( \delta(t,k) \) satisfying \( |\delta(t,k)| \leq C|k|^4 \) for some constant $C>0$ uniformly in time. With the \( \mathcal{L}_e \)-Lipschitz property \eqref{LeLip}, one gets
        \begin{align*}
            |\varphi(t,k) - \phi(t,k)|
            \leq& e^{-t (b_0 + A_\beta^\mathsf{T} k \cdot { \nabla_{k} })} |\varphi_0(k) - \phi_0(k)|+ {\int_{0}^{t}} 
                e^{-(t - \tau) (b_0 + A_\beta^\mathsf{T} k \cdot { \nabla_{k} })} |\widehat{Q_e^+}(\varphi,\varphi)(\tau,k) - \widehat{Q_e^+}(\phi,\phi)(\tau,k)|
             \, {\mathrm{d}{\tau}} \\
            &
            + {\int_{0}^{t}} 
                e^{-(t - \tau) (b_0 + A_\beta^\mathsf{T} k \cdot { \nabla_{k} })} |\delta(\tau,k)|
             \, {\mathrm{d}{\tau}} \\
            \leq& e^{-t (b_0 + A_\beta^\mathsf{T} k \cdot { \nabla_{k} })} |\varphi_0(k) - \phi_0(k)| \\
            &
            + {\int_{0}^{t}} 
                e^{-(t - \tau) (b_0 + A_\beta^\mathsf{T} k \cdot { \nabla_{k} })} \mathcal{L}_e|\varphi(\tau,k) - \phi(\tau,k)|
             \, {\mathrm{d}{\tau}}
            + {\int_{0}^{t}} 
                e^{-(t - \tau) (b_0 + A_\beta^\mathsf{T} k \cdot { \nabla_{k} })} |\delta(\tau,k)|
             \, {\mathrm{d}{\tau}}.
        \end{align*}
Letting 
\begin{equation*}
            Y(t,k) = e^{-t (b_0 + A_\beta^\mathsf{T} k \cdot { \nabla_{k} })} |\varphi_0(k) - \phi_0(k)|
            + {\int_{0}^{t}} 
                e^{-(t - \tau) (b_0 + A_\beta^\mathsf{T} k \cdot { \nabla_{k} })} \mathcal{L}_e Y(\tau,k)
             \, {\mathrm{d}{\tau}}
            + {\int_{0}^{t}} 
                e^{-(t - \tau) (b_0 + A_\beta^\mathsf{T} k \cdot { \nabla_{k} })} |\delta(\tau,k)|
             \, {\mathrm{d}{\tau}},
        \end{equation*}
it can be solved as
$$
Y(t,k) = e^{-t (A_\beta^\mathsf{T} k \cdot { \nabla_{k} } + b_0 - \mathcal{L}_e)}|\varphi_0(k) - \phi_0(k)| + {\int_{0}^{t}} e^{-(t - \tau) (A_\beta^\mathsf{T} k \cdot { \nabla_{k} } + b_0 - \mathcal{L}_e)}|\delta(\tau,k)| \, {\mathrm{d}{\tau}}.
$$
  By comparison principle,  with the assumption that \( |\varphi_0(k) - \phi_0(k)| = O(|k|^p) \), we have by the similar computation as in Theorem \ref{thm:bobControlOfPolynomialDiffViaUp} that
        \begin{align*}
            |\varphi(t,k) - \phi(t,k)|
            &\leq Y(t,k)
            = e^{-t (A_\beta^\mathsf{T} k \cdot { \nabla_{k} } + b_0 - \mathcal{L}_e)}|\varphi_0(k) - \phi_0(k)|
            + {\int_{0}^{t}} e^{-(t - \tau) (A_\beta^\mathsf{T} k \cdot { \nabla_{k} } + b_0 - \mathcal{L}_e)}|\delta(\tau,k)| \, {\mathrm{d}{\tau}} \\
            &\leq C e^{-t (A_\beta^\mathsf{T} k \cdot { \nabla_{k} } + b_0 - \mathcal{L}_e)}|k|^p
            + C{\int_{0}^{t}} 
                e^{-(t - \tau) (A_\beta^\mathsf{T} k \cdot { \nabla_{k} } + b_0 - \mathcal{L}_e)}|k|^4
             \, {\mathrm{d}{\tau}} \\
            &\leq Ce^{-p \beta t} u_p(t,k)
            + C{\int_{0}^{t}} 
                e^{-4 \beta (t - \tau)} u_4(t - \tau,k)
             \, {\mathrm{d}{\tau}} \\
            &= Ce^{-p t (\lambda_p / p + \beta - \|A\|)} |k|^p
            + C{\int_{0}^{t}} 
                e^{-4 \tau (\lambda_4 / 4 + \beta - \|A\|)} |k|^4
             \, {\mathrm{d}{\tau}},
        \end{align*}
        with constants independent of \( t \), where $u_p$ is defined in \eqref{Defup}. {By similar arguments as in the proof of} Theorem \ref{thm:existenceOfBobStatProfileWithFiniteEnergy}, for \( \|A\|, 1 - z \) sufficiently small, both exponents are negative, and so \( |\varphi(t,k) - \phi(t,k)| \leq C |k|^p +C |k|^4 \) for some constant $C$ independent of \( t \).

        If \( |k| \geq 1 \), then \( |\varphi(t,k) - \phi(t,k)| \leq C \leq C|k|^p \). If \( |k|< 1 \), then \( |\varphi(t,k) - \phi(t,k)| \leq C|k|^p + C|k|^4 \leq C|k|^p \). Then \( |\varphi(t,k) - \phi(t,k)| \leq C |k|^p \). Therefore, for \( T > 0 \),
        \begin{align*}
            |\varphi(T / 2, k) - \Phi(\lambda k)|
            &\leq |\varphi(T / 2, k) - \phi(T / 2, k)| + |\phi(T / 2, k) - \Phi(\lambda k)| \\
            &\leq C |k|^p + C\|C(T / 2) - \lambda^2 B\| |k|^2 \\
            &\leq C|k|^p + Ce^{-\nu T / 2} |k|^2,
        \end{align*}
        which, together with \eqref{phi-psi} for initial time \( t_0 = T / 2 \) and \eqref{controlTosbe}, yields
        \begin{align*}
            |\varphi(T, k) - \Phi(\lambda k)|
            &\leq \exp(-\frac{T}{2} (b_0 + A_\beta^\mathsf{T} k \cdot { \nabla_{k} } - \mathcal{L}_e)) |\varphi(T / 2, k) - \Phi(\lambda k)| \\
            &\leq C\exp(-\frac{T}{2} (b_0 + A_\beta^\mathsf{T} k \cdot { \nabla_{k} } - \mathcal{L}_e)) (|k|^p + e^{-\nu T / 2} |k|^2) \\
            &\leq Ce^{-p \beta T / 2} u_p(T / 2, k) + e^{-\nu T / 2} e^{-\beta T} u_2(T / 2, k) \\
            &= Ce^{-T \frac{p}{2} (\frac{\lambda_p}{p} + \beta - \|A\|)} |k|^p
            + e^{-T (\frac{\nu}{2} + \frac{\zeta}{2} + \beta - \|A\|)} |k|^2 \\
            &\leq Ce^{-\eta T} (|k|^p + |k|^2),
        \end{align*}
        with $\eta$ defined in \eqref{Defeta}, and \( \|A\|, 1 - z \) chosen to be sufficiently small. Hence, we get \eqref{converFourier} and the proof of Theorem \ref{thm:convergenceToBobStatSol} is complete.
    \end{proof}

    \begin{remark}
        Note that by splitting \( [0, T] \) into \( [0, r T] \cup [r T, T] \) with \( r \in (0, 1) \) instead of \( [0, T / 2] \cup [T / 2, T] \) in the proof above, we can obtain
        \begin{align*}
            \left|\varphi(T, k) - \Phi(\lambda k)\right|
            &\leq C e^{-(1 - r) T (\lambda_p + p(\beta - \|A\|))} |k|^p + Ce^{-\nu r T} e^{-(1 - r) T (\zeta + 2(\beta - \|A\|))} |k|^2 \\
            &\leq e^{-\eta(r) T} (|k|^p + |k|^2)
        \end{align*}
        with
        \begin{equation*}
            \eta(r)
            = \min(
            (1 - r) (\lambda_p + p (\beta - \|A\|)),
            r \nu + (1 - r) (\zeta + 2 (\beta - \|A\|))),
        \end{equation*}
        which is positive if \( \|A\| \) is {sufficiently} small. If \( \|A\| - \beta < \frac{\lambda_p - \zeta}{p - 2} \), then it holds $$ \max\eta(r) = \frac{\nu (\lambda_p + p (\beta - \|A\|))}{\nu + \lambda_p - \zeta + (p - 2) (\beta - \|A\|)} ,$$ otherwise by selecting \( r  \) sufficiently small, we have \( \eta(r) = \lambda_p + p (\beta - \|A\|) - \epsilon \) {for arbitrary $\epsilon$, provided that it is small enough}.
    \end{remark}


    Both Theorem \ref{thm:existenceOfBobStatProfileWithFiniteEnergy} and Theorem \ref{thm:convergenceToBobStatSol} are given in the sense of Fourier transform. We also have the corresponding results in terms of distribution functions.

    \begin{theorem} \label{DFss}
        Let \( p \in (2, 4] \). Then there exists \( \epsilon_p > 0 \) such that for \(\|A\|< \epsilon_p \) and
        \( 1 - z < \epsilon_p \), there exists a self-similar solution $f$ to \eqref{equ:inelasticBoltzmann} in the form:
        \begin{align*}
            f(t,v)=e^{-d \beta t} G(e^{-\beta t}v),
        \end{align*}
        with {$G$ being Radon probability measure and}
        \begin{align*}
            \int_{\mathbb{R}^d}G(v)|v|^p \mathrm{d}{v} <\infty.
        \end{align*}
    \end{theorem}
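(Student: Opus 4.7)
The strategy is to translate the stationary self-similar Fourier profile $\Phi \in \mathcal{S}_p$ produced by Theorem \ref{thm:existenceOfBobStatProfileWithFiniteEnergy} back to velocity space and then undo the self-similar scaling. Fix $p \in (2,4]$ and choose $\epsilon_p > 0$ so that the hypotheses of Theorem \ref{thm:existenceOfBobStatProfileWithFiniteEnergy} hold. This yields $\beta \in \mathbb{R}$, a positive definite matrix $B$, and a unique $\Phi \in \mathcal{S}_p$ satisfying \eqref{equ:inelasticBobylevSelfSimStationary}. Since $\Phi \in \mathcal{K}$, it is the characteristic function of a probability measure on $\mathbb{R}^d$, so we may define $G := \mathcal{F}^{-1}\Phi \in \mathcal{M}_+$, which has unit mass and zero momentum.

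Next I would verify that $G$ has finite $p$-moment. The definition of $\mathcal{S}_p$ in \eqref{def.Sp} gives the expansion
\begin{equation*}
\Phi(k) = 1 - \tfrac{1}{2} B : {{k}^{\otimes 2}} + O(|k|^p)
\end{equation*}
near the origin, with $|\Phi| \leq 1$ globally. By the Cannone--Karch/Bobylev--Gamba moment--Toscani correspondence (see \cite{CannoneKarch09IES, BobylevGamba06BEM}), this regularity of $\Phi$ at the origin, combined with the boundedness of $\Phi$, is equivalent to $\int_{\mathbb{R}^d} |v|^p\, G(dv) < \infty$ together with $\int v \otimes v\, G(dv) = B$ and $\int v\, G(dv) = 0$. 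Consequently $G$ is a probability measure with the required finite $p$-th order moment.

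Finally, set $f(t,v) := e^{-d\beta t} G(e^{-\beta t} v)$. By construction the self-similar rescaling $\tilde{f}(t,\tilde{v}) = e^{d\beta t} f(t, e^{\beta t}\tilde{v})$ equals $G(\tilde v)$ and is therefore time-independent, so its Fourier transform $\Phi(k)$ satisfies $\partial_t \Phi = 0$ together with \eqref{equ:inelasticBobylevSelfSim}, which is equivalent to \eqref{equ:inelasticBobylevSelfSimStationary} for $\Phi$ - precisely the equation solved in Theorem \ref{thm:existenceOfBobStatProfileWithFiniteEnergy}. Inverting the Fourier transform shows that $G$ solves \eqref{equ:inelasticBoltzmannSelfSimStationary} in the sense of measures; undoing the self-similar change of variable then shows that $f$ solves \eqref{equ:inelasticBoltzmann}, and the $p$-moment bound on $G$ transfers directly to the required estimate $\int_{\mathbb{R}^d} |v|^p G(v)\,\mathrm{d}v < \infty$.

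The main obstacle I foresee is the Toscani-to-moment translation for $p \in (2,4]$: the classical Cannone--Karch theory handles $p \in (0, 2]$ directly, whereas here the second-order term has been subtracted, so one must show that the remainder in $\Phi(k) - (1 - \tfrac{1}{2} B : k^{\otimes 2})$ being of Toscani order $p$ genuinely encodes a finite absolute $p$-moment for the underlying measure. A clean way to do this is to regularize $G$ by convolution with a narrow Gaussian (whose second moment is suitably adjusted), apply the $(0,2]$-result to the difference against a Gaussian of covariance $B$, and pass to the limit using the uniform Toscani bound; alternatively one may cite the corresponding statement in the inelastic literature, where this precise translation has been established in the context of Maxwell-type granular gases.
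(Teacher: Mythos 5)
Your overall route is the same as the paper's: Theorem \ref{DFss} is obtained there as a direct consequence of Theorem \ref{thm:existenceOfBobStatProfileWithFiniteEnergy}, by viewing the fixed point $\Phi\in\mathcal{S}_p$ as the characteristic function of a measure $G$ and undoing the self-similar scaling; that part of your argument (stationarity of $\tilde f\equiv G$, equivalence of \eqref{equ:inelasticBobylevSelfSimStationary} and \eqref{equ:inelasticBoltzmannSelfSimStationary} under Fourier inversion) is fine.

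The genuine gap is the moment step. You assert that $\|\Phi-(1-\tfrac12 B:k^{\otimes 2})\|_p<\infty$ is \emph{equivalent} to $\int |v|^p\,G(\mathrm{d}v)<\infty$, citing the Cannone--Karch correspondence. That correspondence only goes one way: finite $p$-moment (with matching lower-order terms) implies the Toscani-type bound, not conversely -- the paper itself states only the inclusion ``$\mathcal{K}^p$ \emph{contains} all characteristic functions with finite $p$-moment''. The converse is false in general: a measure with density decaying like $|v|^{-d-p}$ has infinite $p$-th moment, yet its characteristic function admits the expansion $1-\tfrac12 B:k^{\otimes 2}+c|k|^p+o(|k|^p)$ and hence lies in $\mathcal{S}_p$ as defined in \eqref{def.Sp}; for $p\in(0,2)$ the symmetric $p$-stable laws give the same phenomenon. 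So membership of $\Phi$ in $\mathcal{S}_p$ alone does not yield $\int|v|^pG<\infty$ (only the finiteness of the second moment, which does follow from the $o(|k|^2)$ expansion at $k=0$). Your proposed repair by Gaussian regularization and the $(0,2]$ theory runs into the same one-directional obstruction. What does close the gap is to obtain a genuine moment bound on $G$ rather than reading it off the Toscani norm: either identify the Fourier fixed point (after normalizing the temperature) with the measure-valued stationary profile of Theorem \ref{thm:existenceOfStatProfileWithSmallness}, whose $s$-moment for $s\in(2,4]$ is controlled uniformly via the Povzner inequality, and invoke uniqueness of the fixed point in $\mathcal{S}_p$; or propagate uniform $p$-moment bounds (again via Povzner) along the approximation defining $\Phi$ and pass to the limit by lower semicontinuity.
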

    
    The proof of Theorem \ref{DFss} is a direct consequence of Theorem \ref{thm:existenceOfBobStatProfileWithFiniteEnergy} and thus omitted here. The following theorem gives the convergence {for solutions of \eqref{equ:inelasticBoltzmann} to the self similar profile}.
    
    \begin{theorem} \label{thm:convToDFss}
        Suppose \( f \in \mathit{C}([0, \infty), \mathcal{M}_+) \) is a weak solution of \eqref{equ:inelasticBoltzmann} on initial condition \( f_0(v) \in \mathcal{M}_+ \) satisfying $\int_{\mathbb{R}^d}f_0(v)|v|^p \mathrm{d}{v} <\infty$ for any $p>2$, then for $\beta$ and $\lambda$ chosen in Theorem \ref{thm:convergenceToBobStatSol} and $G$ defined in Theorem \ref{DFss}, it holds that
        \begin{align}\label{convergence}
            e^{d \beta t}f(t,e^{\beta t} v)\rightarrow \lambda^{-d}G(\lambda^{-1}v),
        \end{align}
        in the weak topology of $\mathcal{M}_+$ as $t\rightarrow 0$.
    \end{theorem}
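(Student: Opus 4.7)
The plan is to reduce the problem to the Fourier side and invoke Theorem \ref{thm:convergenceToBobStatSol}. First, introduce the self-similar rescaling $\tilde{f}(t,v) = e^{d\beta t} f(t, e^{\beta t} v)$ with $\beta$ chosen as in Theorem \ref{thm:existenceOfBobStatProfileWithFiniteEnergy}; then $\tilde{f}$ is a weak solution to \eqref{equ:inelasticBoltzmannSelfSim} with initial datum $f_0$. Taking Fourier transform, $\tilde{\varphi}(t,k) = \mathcal{F}\tilde{f}(t,k)$ is a mild solution of \eqref{equ:inelasticBobylevSelfSim} in $\mathit{C}([0,\infty),\mathcal{K})$ with initial characteristic function $\tilde{\varphi}_0 = \mathcal{F} f_0$.

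Second, verify the Toscani-type hypothesis of Theorem \ref{thm:convergenceToBobStatSol}. Setting $C_0 = \int_{\mathbb{R}^d} v^{\otimes 2} f_0(\mathrm{d}{v})$, the elementary inequality $|e^{ix} - 1 - ix + x^2/2| \leq C\min(|x|^2, |x|^3)$, combined with zero momentum and the finite $p$-moment assumption on $f_0$, yields for any $p' \in (2, \min(p,3)]$ that
\begin{equation*}
\bigl|\tilde{\varphi}_0(k) - (1 - \tfrac{1}{2} C_0 : k^{\otimes 2})\bigr| \leq C |k|^{p'} \quad \text{on } |k| \leq 1.
\end{equation*}
For $|k| \geq 1$ the same difference is bounded by $2 + \tfrac{1}{2}\|C_0\| |k|^2 \leq C|k|^{p'}$ since $p' > 2$. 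Hence $\|\tilde{\varphi}_0 - (1 - \tfrac{1}{2} C_0 : k^{\otimes 2})\|_{p'} < \infty$ and $\tilde{\varphi}_0 \in \mathcal{S}_{p'}$. Applying Theorem \ref{thm:convergenceToBobStatSol} then yields $\lambda > 0$ and $\eta > 0$ such that
\begin{equation*}
|\tilde{\varphi}(t,k) - \Phi(\lambda k)| \leq C e^{-\eta t}(|k|^{p'} + |k|^2) \longrightarrow 0 \quad \text{as } t \to \infty
\end{equation*}
pointwise in $k \in \mathbb{R}^d$.

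Finally, a direct change of variables shows that $\Phi(\lambda \cdot)$ is exactly the Fourier transform of the rescaled measure $\lambda^{-d} G(\lambda^{-1} v)$. Since $\tilde{\varphi}(t,k) \to \Phi(\lambda k)$ pointwise with a limit continuous at the origin (as the characteristic function of a probability measure), L\'evy's continuity theorem gives weak convergence of the associated probability measures, which translates back to the original variables as the desired statement \eqref{convergence}.

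The main obstacle, though technical rather than deep, is the verification in Step 2: one must select an admissible Toscani exponent $p' \in (2, \min(p,3)]$ so that the smallness thresholds $\|A\| < \epsilon_{p'}$ and $1 - {e_\mathrm{res}} < \epsilon_{p'}$ required by Theorem \ref{thm:convergenceToBobStatSol} are simultaneously met; once this is arranged, the passage from Fourier-level convergence to weak convergence of measures via L\'evy's theorem is standard and the remaining steps are immediate from the machinery developed in Section \ref{sec:statFourierProfile}.
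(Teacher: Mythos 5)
Your proposal is correct and follows essentially the same route as the paper: the paper's (very brief) proof likewise combines Theorem \ref{thm:convergenceToBobStatSol} with the scaling properties of the Fourier transform and uniform convergence of characteristic functions on compact sets (L\'evy continuity), citing \cite{BobylevEtAl20SSA,Feller09IPT2}. Your additional verification that $\mathcal{F}f_0$ satisfies the Toscani-type hypothesis with an admissible exponent $p'\in(2,\min(p,3)]$ is a detail the paper leaves implicit, and it is carried out correctly.
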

    
    \begin{proof}
        The weak convergence \eqref{convergence} follows from the combination of Theorem \ref{thm:convergenceToBobStatSol}, the properties of the Fourier transform that relates the group of translations with the multiplication by a phase, the
        scaling properties of the Fourier transform and the uniform convergence of characteristic functions in compact sets. See also \cite{BobylevEtAl20SSA,Feller09IPT2}.
    \end{proof}

    Combining Theorem \ref{DFss} and Theorem \ref{thm:convToDFss}, we obtain the result in Theorem \ref{thm:mainResult-convToProfile}.

    \medskip
    \noindent {\bf Acknowledgment:}\,
JAC was supported by the Advanced Grant Nonlocal-CPD (Nonlocal PDEs for Complex Particle Dynamics: Phase Transitions, Patterns and Synchronization) of the European Research Council Executive Agency (ERC) under the European Union’s Horizon 2020 research and innovation programme (grant agreement No. 883363). JAC was also partially supported by the EPSRC grant number EP/V051121/1. RJD was partially supported by the General Research Fund (Project No.~14303321) from RGC of Hong Kongcand also partially supported by the grant from the National Natural Science Foundation of China (Project No.~12425109). The authors would thank the anonymous referees for valuable and helpful comments on the manuscript. 

    \medskip

    \noindent{\bf Conflict of Interest:} The authors declare that they have no conflict of interest.

\end{document}